\renewcommand{\P}{\mathbb P}
\newcommand{\m}{\mathcal}
\newcommand{\cal}{\mathcal}
\newcommand{\cL}{{\mathcal L}}
\newcommand{\cC}{{\mathcal C}}
\newcommand{\sP}{{\mathscr P}}
\newcommand{\dd}{\hspace{0.7pt}{\rm d}}
\DeclareMathOperator{\tr}{tr}
\newtheorem{assum}[thm]{Assumption}
\numberwithin{equation}{section}
\title[Well-Posedness of Displacement Monotone Degenerate MFG Master equations]{Global Well-Posedness of Displacement Monotone Degenerate Mean Field Games Master Equations}
\author[M. Bansil]{Mohit Bansil}
\address{Department of Mathematics, University of California, Los Angeles, CA 90095-1555, USA}
\email{mbansil@math.ucla.edu} 
\author[A.R. M\'esz\'aros]{Alp\'ar R. M\'esz\'aros}  
\date{\today}
\address{Department of Mathematical Sciences, University of Durham, Durham DH1 3LE, England}
\email{alpar.r.meszaros@durham.ac.uk} 
\author[C. Mou]{Chenchen Mou}
\address{Department of Mathematics, City University of Hong Kong, Hong Kong SAR, China}
\email{chencmou@cityu.edu.hk}
\thanks{{\it Keywords and phrases}: MFG master equations; displacement monotonicity; non-separable Hamiltonian; absence of idiosyncratic noise; common noise.}
\begin{document}

\maketitle
\begin{abstract}
In this paper we construct global in time classical solutions to mean field games master equations in the lack of idiosyncratic noise in the individual agents' dynamics. These include both deterministic models and dynamics driven solely by a Brownian common noise. We consider a general class of non-separable Hamiltonians and final data functions that are supposed to be displacement monotone. Our main results unify and generalize in particular some of the well-posedness results on displacement monotone master equations obtained recently by Gangbo--M\'esz\'aros and Gangbo--M\'esz\'aros--Mou--Zhang. 
\end{abstract}

\section{Introduction}

\indent Master equations associated to mean field games (MFG) have been introduced by P.-L. Lions in his lectures \cite{Lions} at Coll\`ege de France. These are PDEs of hyperbolic type, whose solutions depend both on the state of individual agents (typically a variable in a finite dimensional Euclidean space) and on the agents' distribution (typically a Borel probability measure supported over the state space of the agents). Beside their independent interest, one of the main motivations for studying these equations lies in the fact that their classical solutions can be used to provide quantitative rates of convergence for the closed loop Nash equilibria of stochastic differential games, when the number of agents tends to infinity (cf. \cite{CarDelLasLio}). Although such rigorous convergence results were obtained in the presence of non-degenerate idiosyncratic noise only, we believe that classical solutions to the master equation when the idiosyncratic noise is degenerate (the ones that we provide in this paper), could potentially shed light on the convergence problem in more singular scenarios as well. To the best of our knowledge, the well-posedness of Nash system associated to the $N$-player game is open in the case of degenerate idiosyncratic noise, however, the well-posedness of master equation that we obtain in our manuscript could potentially be useful for the future proof of the convergence result. They can serve also as important tools in showing large deviation principles, concentration of measure and central limits theorems for these games (see \cite{DelLacRam19,DelLacRam20}).

\indent Because of the infinite dimensional character of these equations, their well-posedness provides great mathematical challenges and so their investigation has gained considerable attention in the community in the past decade. Classical solutions to the master equation are known to exist under certain assumptions on the data, which are responsible for the uniqueness of the MFG Nash equilibria of the underlying game. These assumptions can be roughly grouped into two categories: (i) the data satisfy some sort of {\it smallness condition} (related to the time horizon, to the Hamiltonian, to a specific subclass of probability measures, etc.) or (ii) the data fulfill suitable {\it monotonicity conditions}.

\indent In the case (i), besides the smallness assumption typically there is no need to impose additional structural assumptions on the data (such as separability or convexity of the underlying Hamiltonian or final datum or the presence of a non-degenerate idiosyncratic noise) governing the game (see for instance \cite{Carmona2018,GanSwi, May, AmbMes}). The question regarding the global well-posedness of master equations (in the class of classical solutions) is way more subtle and this is understood in suitably defined monotone regimes (cf. case (ii)). In the literature to date, these are essentially classified into the following groups: the so-called {\it Lasry--Lions monotonicity} and {\it displacement monotonicity} conditions. Historically, the Lasry--Lions monotonicity condition was used first for the global well-posedness of the master equation (see \cite{ChaCriDel22, CarDelLasLio,Carmona2018}). When dealing with classical solutions, it worth mentioning that the Lasry--Lions monotonicity condition on its own is in general not enough for the global well-posedness of the underlying master equation, unless a non-degenerate idiosyncratic noise (or stronger convexity assumption on the data) is also present and the corresponding Hamiltonians are separable in the momentum and measure variables, i.e. they possess a decomposition of the form
\begin{equation}\label{hyp:sep}
H(x,\mu,p):=H_0(x,p) - F(x,\mu),
\end{equation}
for some $H_0:\R^d\times\R^d\to\R$ and $F:\R^d\times\sP_2(\R^d)\to\R$ (where the state space of the agents is $\R^d$ and $\sP_2(\R^d)$ stands for the set of Borel probability measures with finite second moment, supported on $\R^d$, describing the agents' distribution). 

\indent Displacement monotonicity (which stems from the notion of displacement convexity arising in optimal transport, \cite{McC}) is an alternative condition which guarantees the existence and uniqueness of classical solutions to the master equation. Prior to using this condition in the context of master equations, under different names (as `weak monotonicity' or `$L$-monotonicity') this condition has already appeared in works on MFG (see \cite{Ahu,AhuRenYan} and \cite[Section 3.4.3]{CD1}) and on FBSDEs of McKean--Vlasov type (see \cite{CarDel15}). This condition turned out to be sufficient in the case of deterministic potential master equations in the lack of the regularizing effect of the idiosyncratic noise (\cite{TwoAuthor2022,BenGraYam}), or for a general class of non-separable Hamiltonians in the presence of non-degenerate idiosyncratic noise (\cite{FourAuthor}). In the presence of Lasry--Lions monotonicity and non-degenerate idiosyncratic noise, for separable Hamiltonians suitable notions of weak solutions have been proposed if the data are not regular enough (\cite{MouZha23,Ber}). In such cases, we still have uniqueness of the MFG Nash equilibria.

\indent  Recently, new monotonicity conditions were proposed beyond the Lasry--Lions and displacement monotonicity regimes. First, so-called {\it anti-monotonicity} conditions were introduced in \cite{MouZha22} (see also \cite{BanMes} in this context). This regime includes coupling functions which are not monotone in the Lasry--Lions or displacement sense. In this work it was shown that if the data are sufficiently anti-monotone in an appropriate sense, one has uniqueness of Nash equilibria and the global well-posedness of the master equation holds, from the point of view of classical solutions. Second, the recent works \cite{GraMes22, GraMes22b} proposed a new notion of monotonicity condition (in general in dichotomy with all the previously mentioned ones), which is also a sufficient one for the global well-posedness of the master equation.

\indent When the uniqueness of the MFG Nash equilibria does not hold, the classical well-posedness theory for the master equation breaks down in finite time and it is a great challenge to define suitable notions of weak solutions, which may help selecting specific equilibria of the game. In this direction it worth mentioning the recent breakthrough \cite{CecDel} which proposes a notion of weak solution (in the spirit of entropy solutions) for potential MFG master equations in the presence of non-degenerate idiosyncratic noise. It has been pointed out in \cite{GraMes22} that weak solutions in entropy sense (although different from the ones in \cite{CecDel}) might in general not select MFG Nash equilibria of the underlying game. In the very recent work \cite{MouZhang23} a partial order has been proposed on the set of Nash equilibria arising in a class of non-potential MFGs. The value functions corresponding to the minimal and maximal Nash equilibria are proved to satisfy the master equation in a certain weak sense. In particular, when these two value functions coincide, this becomes the unique solution to the master equation.

\indent As discussed above, the Lasry--Lions monotonicity condition on the data without the presence of a non-degenerate idiosyncratic noise in general cannot guarantee the uniqueness of solutions to the MFG system (see the discussion in \cite{GraMes22b}) and so, the existence of a classical solution to the corresponding master equation. 
In the lack of non-degenerate idiosyncratic noise, the literature discusses two important classes of examples: purely deterministic problems and problems driven only by a common noise. In the case of deterministic Lasry--Lions monotone MFG systems, \cite[Theorem 1.8]{CarPor20} presents a uniqueness result under the additional assumption that the measure component is essentially bounded. MFG systems and master equations driven by common noise only have been recently investigated in the series of interesting works. 
We refer the reader to \cite{CarSou22-sima,CarSou22-aap}. Further progress in this direction was achieved in \cite{CarSeeSou23}, where the authors consider common noise and degenerate idiosyncratic noise. In \cite{CarSou22-sima}, a notion of weak solution for the master equation is obtained in the spirit of monotone solutions proposed in \cite{Ber}. The study of MFG with common noise goes back to the works \cite{Ahu,CarDelLac}. Interestingly, already in these early works it has been discussed that additional convexity properties of the value function can render a stronger notion of solutions to MFG with common noise (than the ones in \cite{CarSou22-aap,CarSeeSou23}). It is well-known now that the convexity of the value function in the state variable is strongly linked to the displacement monotonicity of the data (see \cite{TwoAuthor2022,FourAuthor, MesMou, GraMes22b}).

\indent To the best of our knowledge, there are only very few works in the literature studying the global existence and uniqueness of classical solutions to the master equation in lack of non-degenerate idiosyncratic noise: \cite{TwoAuthor2022} considers potential deterministic master equation in the case of separable Hamiltonians and displacement convexity; \cite{GraMes22} studies a class of deterministic master equations in the presence of a different monotonicity condition; a particular dimension reduction technique and the associated monotonicity conditions allowed the authors of \cite{LasLioSee} to obtain global classical solutions to the deterministic master equation. Finally, in \cite{CarSou22-sima} the authors obtain weak monotone solutions to a class of time independent master equations both in the deterministic setting and driven by common noise  only (clearly, without any displacement monotonicity assumptions on the data, and in the case of separable Hamiltonians).

\medskip

\indent Our objective in this manuscript is to show the global existence and uniqueness of classical solutions to the master equation in the lack of idiosyncratic noise and the presence of displacement monotone data. Our result cover both the deterministic problem and the one driven purely by a common noise. The main actor of our study is the master equation
\begin{equation}\label{eq:master}
\left\{
\begin{array}{rcll}
-\partial_t V(t,x,\mu) + H(x,\mu,\pa_x V) - (\m NV)(t,x,\mu) - \frac{\beta^2}{2} \Delta_{{\rm{com}}} V(t,x,\mu) &=& 0,& {\rm{in}}\ (0,T)\times\R^d\times\sP_2(\R^d), \\ 
V(T, x, \mu) &=& G(x, \mu), & {\rm{in}}\  \R^d\times\sP_2(\R^d)
\end{array} 
\right.
\end{equation}
where
\[
(\m NV)(t,x,\mu) :=  -\int_{\R^d} \pa_\mu V(t,x,\mu,\ti x) \cdot \pa_pH(\ti x, \mu, \pa_x V(t, \ti x, \mu)) \dd\mu(\ti x)
\]
and 
\[
\Delta_{{\rm{com}}}V (t,x,\mu) &:= \tr(\pa_{x x} V(t,x,\mu)) + \int_{\R^d} \tr(\pa_{\ti x \mu} V(t,x,\mu,\ti x)) \dd\mu(\ti x) + 2 \int_{\R^d} \tr(\pa_{x \mu} V(t,x,\mu,\ti x)) \dd\mu(\ti x) \\
&+ \iint_{\R^{d}\times\R^d} \tr(\pa_{\mu \mu} V(t,x,\mu,\ti x, \bar x)) \dd\mu(\ti x) \dd\mu(\bar x).
\]
Here $T>0$ is the time horizon of the game, $\beta\in \R$ stands for the intensity of the common noise represented by a Brownian motion $(B^{0}_t)_{t\in[0,T]}$ on $\R^d$, $H:\R^d\times\sP_2(\R^d)\times\R^d\to\R$ and $G:\R^d\times\sP_2(\R^d)\to\R$ are the Hamiltonian and the final cost function, respectively.

\begin{defin}
A function $V: (0,T)\times\R^d\times\sP_2(\R^d) \to \R$ is said to be a classical solution to the master equation if all of the derivatives that appear in the equation exist and are continuous (with respect to Euclidean distance and $W_1$) and $V$ satisfies the master equation pointwise. 
\end{defin}

The master equation \eqref{eq:master} is strongly linked to the following mean field games system: for any $t_0\in (0,T)$,

\begin{equation}\label{eq:SPDE}
\left\{
\begin{array}{lll}
\displaystyle \dd u(t, x)&\displaystyle =  - \Big[\tr\big(\frac{\beta^2}{2} \pa_{xx} u(t,x) + \beta\partial_x v^\top(t,x)\big) -H(x,\rho_t,\pa_x u(t,x))\Big]\dd t +  v(t,x)\cdot \dd B_t^0, & {\rm{in}}\ (t_0,T)\times\R^d,\\
\displaystyle \dd \rho(t,x)&\displaystyle =  \Big[\frac{\beta^2}{2}\tr\big( \pa_{xx} \rho(t,x)\big) + {\rm{div}}(\rho(t,x) \pa_p H(x, \rho_t, \pa_x u(t,x)))\Big]\dd t-\beta\partial_x\rho(t,x)\cdot \dd B_t^0, & {\rm{in}}\ (t_0,T)\times\R^d,\\
\displaystyle&\displaystyle \rho(t_0,\cdot) = \mu,\  u(T,x) = G(x, \rho(T,\cdot)), & {\rm{in}}\ \R^d.
\end{array}
\right.
\end{equation}
The solution to \eqref{eq:SPDE} is a triple $(\rho, u, v)$, progressively measurable with respect to the filtration generated by the common noise $B^0$, which serves formally as the system of generalized characteristics for \eqref{eq:master}. We note that if $\beta\neq0$, then $\rho(t,\cdot,\omega)$ is a random probability measure. Conversely, the solution $V$ to the master equation \eqref{eq:master} also serves as the decoupling field for this forward-backward SPDE system, i.e.
\begin{equation}\label{eqn: MFSrepform}
u(t,x,\omega) = V(t,x, \rho(t,\cdot,\omega)).
\end{equation}

\medskip

{\bf The description of our results.} As our main result (Theorem \ref{thm:main}) we show the global in time existence and uniqueness of a classical solution to \eqref{eq:master}, by assuming that $H$ and $G$ satisfy the displacement monotonicity and suitable regularity conditions. The roadmap to the proof of our main result is similar in spirit to the one used in \cite{FourAuthor,MouZha22}, but several new ideas were necessary to fulfill this because of the lack of the idiosyncratic noise. 

\indent Let us discuss the main similarities and differences in the two approaches. First, the heart of our analysis is the a priori propagation of the displacement monotonicity: if $V$ is a classical solution to \eqref{eq:master} and $H$ and $G$ are displacement monotone, so is $V(t,\cdot,\cdot)$. Displacement monotonicity will readily imply that $\pa_x V(t,x,\cdot)$ is Lipschitz continuous with respect to the metric $W_2$ (with a Lipschitz constant depending on the data and on $\|\pa_{xx}V\|_{L^\infty}$). 
It is well-known (see \cite{Carmona2018,CarCirPor20,BerLasLio:24}) that the master equation is well-posed for short time if the data is regular enough (without any monotonicity assumptions). The short time horizon depends on the Lipschitz constant of $\partial_x V(t,\cdot,\cdot)$ (in the metric $W_2$ for the measure variable). However, for the global well-posedness of classical solutions to \eqref{eq:master}, the uniform a priori Lipschitz continuity of $\pa_xV(t,x,\cdot)$ in the metric $W_1$ is essential (see the discussion in \cite{FourAuthor}). To show that this $W_1$-Lipschitz constant is a priori bounded, we use two arguments. First, the uniform a priori estimates on $\|\pa_{xx}V\|_{L^\infty}$ are a consequence of the semi-concavity bounds (a result of classical optimal control arguments) and convexity (a consequence of the displacement monotonicity) of $V(t,\cdot,\mu)$. To obtain the necessary a priori bounds on $\|\pa_{\mu x}V\|_{L^\infty}$ we rely on several representation formulas via suitable FBSDE systems. Although these representation formulas are similar in spirit to the ones used in \cite{MouZha23,FourAuthor}, we need to work with different systems of FBSDEs. During this process, we show also that -- similarly as in \cite{MouZha23,FourAuthor} -- the small time horizon depends on the $W_2$-Lipschitz constant of $\partial_x V$. 

\indent This approach represents one of the major differences with the work \cite{FourAuthor}. Let us elaborate more on this. Indeed, we can observe that the FBSDE systems in \cite{FourAuthor} (see for instance the \cite[System (2.24)]{FourAuthor}) are not natural if the intensity of the idiosyncratic noise is taken to be zero. Therefore, instead we will be working with FBSDE systems of Pontryagin type, where the natural variables are the state and the momentum (instead of the state and optimal value, as in \cite{FourAuthor}). Such FBSDE systems have already been introduced and studied in \cite{ChaCriDel22,Carmona2018,MouZha22} for the study of master equations. It will become the classical forward-backward Hamiltonian system in case of deterministic problems. This subtlety has been emphasized in \cite[Section 5.2]{ChaCriDel22}: in the case when non-degenerate idiosyncratic noise is present optimal paths may be characterized by solutions of FBSDEs, where the value function is represented as the decoupling field of the forward-backward system (similarly to the approach used also in \cite{FourAuthor}); on the contrary, when the idiosyncratic noise is degenerate but additional convexity is present on the data (which is provided in our case by the displacement monotonicity), the natural characterization of the optimal paths may be obtained via the stochastic Pontryagin principle, where the decoupling field of the FBSDE system is understood as the gradient of the value function of the optimization problem.

\indent The FBSDE systems used in \cite{FourAuthor} required stricter assumptions on the data. For instance, as we can see in \cite[Assumpotions 3.1 and 3.2]{FourAuthor}, $G$ was assumed to be globally Lipschitz continuous (with respect to the metric $W_1$ in the measure variable) and $H$ was assumed to be Lipschitz continuous in all three variables (locally in the momentum variable, but globally in the state and measure variables). These actually imposed that $\pa_x G,\pa_\mu G$ are uniformly bounded (in $\R^d\times\sP_2(\R^d)$) and in case of $H$, $\pa_x H,\pa_pH$ are uniformly bounded in $\R^d\times\sP_2(\R^d)\times B_R(0)$ and $\pa_\mu H$ uniformly bounded in $\R^d\times\sP_2(\R^d)\times\R^d\times B_R(0)$ (with constants possibly depending on $R>0$). In contrast to these, we improve these assumptions in the way that we require only $\pa_x G, \pa_xH,\pa_p H$ to be uniformly Lipschitz continuous (see Assumptions \ref{assum: G} and \ref{assum: H} below). As a result of the assumption in \cite{FourAuthor} the value function was such that both $\pa_x V$ and $\pa_\mu V$ are uniformly bounded, while this will not be the case in our work (allowing for instance both $\pa_x V$ and $\pa_\mu V$ to have linear growth in $x$ at infinity).

\indent Another significant difference is that in contrast to \cite{FourAuthor}, in the current manuscript the displacement monotonicity conditions imposed both on $G$ and $H$ are first order conditions (in \cite{FourAuthor} the displacement monotonicity condition imposed on $H$ was a delicate second order condition). As a result of this, we have sharpened the assumptions on the data significantly. This condition on $H$ was proposed first in \cite{MesMou}, and we refer to a discussion therein, which details that this in general does not imply the second order monotonicity condition imposed on $H$ in \cite{FourAuthor}, even in the case of smooth Hamiltonians. Because of this, the analysis that we use here is significantly different from the one in \cite{FourAuthor}, as for the propagation of the displacement monotonicity we avoid differentiating the master equation itself. We work directly at the level of the FBSDE and MFG systems. Our inspiration for these techniques comes from \cite{MesMou}.

\indent It is not hard to see that adding an additional idiosyncratic Brownian noise (with a constant intensity) would result in essentially the same analysis as the one present in this manuscript. The Pontryagin principle used here is very similar in spirit to the analysis on the deterministic Hamiltonian system in \cite{TwoAuthor2022}, however, the results there (as they rely both on the separable Hamiltonian and potential game structure) cannot imply our results if $\beta=0$. Therefore, the results of our manuscript unify and generalize the results of \cite{TwoAuthor2022} and \cite{FourAuthor} significantly.

\medskip

\indent The structure of the rest of the paper is simple. In Section \ref{sec:2} we present some notations and the necessary assumptions on the data. Section \ref{sec:3} contains the classical semi-concavity estimates and convexity results for the master field (i.e. the candidate solution to the master equation). Here we discuss the propagation of the displacement monotonicity and its consequences as well. Section \ref{sec:4} contains the technical results on the representation formula for $\pa_{\mu x}V$ which yields the crucial a priori $W_1$-Lipschitz estimate for $\pa_x V$. Section \ref{sec:5} contains a by now standard argument describing how to extend the local in time well-posedness theory for the master equation, in case of sufficient a priori estimates.

\section{Notations, setup and assumptions}\label{sec:2}

Let us fix an arbitrary finite time horizon $T>0$. Throughout the paper we will use the following product filtered probability space on $[0, T]$: 
\begin{eqnarray*}
\Omega := \Omega_0 \times \Omega_1,\quad \mathbb F := \{\mathcal{F}_t\}_{0\leq t\leq T} := \{\mathcal{F}^0_t \otimes \mathcal{F}_0^1\}_{0\leq t\leq T},\quad \mathbb P := \mathbb P_0\otimes \mathbb P_1,\quad \mathbb E:= \mathbb E^\mathbb P.
\end{eqnarray*}
Here, for $\omega = (\omega^0, \omega^1)\in \Omega$, $B^0(\omega) = B^0(\omega^0)$ is a $d$-dimensional Brownian motions; $\mathbb F^0=\{\mathcal{F}^0_t\}_{0\leq t\leq T}$ is generated by $B^0$; we assume $\mathcal{F}^1_0$ has no atom.
Let $(\tilde \Omega_1, \tilde {\mathcal F}_0^1, \tilde {\mathbb P}_1)$ be a copy of the probability space $(\Omega_1,   \mathcal F_0^1, \mathbb P_1)$ and define the larger filtered probability space by
\begin{eqnarray*}
\tilde \Omega := \Omega\times \tilde\Omega_1 ,\quad \tilde{\mathbb F} = \{\tilde {\mathcal{F}}_t\}_{0\leq t\leq T} := \{\mathcal {F}_t \otimes \tilde {\mathcal {F}}^1_0\}_{0\leq t\leq T},\quad \tilde {\mathbb P} := \mathbb P\otimes \tilde{\mathbb P}_1,\quad \tilde {\mathbb E}:= \mathbb E^{\tilde {\mathbb P}}.
\end{eqnarray*}
Given an $\mathcal{F}_t$-measurable random variable $\xi(\tilde \omega) = \varphi(\omega^0, \omega^1)$, $\tilde\omega=(\omega^0, \omega^1, \tilde \omega^1)\in \tilde \Omega$, we see that $\tilde \xi(\tilde \omega) := \varphi(\omega^0, \tilde \omega^1)$ is a conditionally independent copy of $\xi$, conditional on $\mathcal{F}^0_t$ under $\tilde {\mathbb P}$.

Throughout the paper we will use the filtered probability space $(\Omega, \mathbb F, \mathbb P)$. However, when conditionally independent copies of random variables or processes are needed, we will tacitly use their extensions to the larger space $(\tilde \Omega, \tilde {\mathbb F}, \tilde {\mathbb P})$ 
without mentioning. For any $t_0\in [0,T]$ we define $B_t^{0,t_0}:=B_t^0-B_{t_0}^0$. For any $\mathcal{G}\subset \mathcal{F}$, we use $L^2(\mathcal{G})$ to denote the set of $\mathcal{G}$-measurable and 2-integrable random variables.

\subsection{Elements of analysis and calculus on the Wasserstein space}\label{sect:Wasserstein}

Let $\sP(\R^d)$ be the set of Borel probability measures supported in $\mathbb R^d$. For any $ q\geq 1$ and any measure $\mu\in \sP(\R^d)$, we set $M_q(\mu):=\left( \int_{\mathbb R^d} |x|^q \dd\mu(x)\right)^{1\over q}$. Furthermore, let $\sP_q(\R^d):=\{\mu\in\sP(\R^d):~ M_q(\mu) < \infty\}$. For any $\mu,\nu\in \sP_q(\R^d)$, the $W_q$--Wasserstein distance  is defined as
$$
W_q(\mu, \nu) := \inf\left\{\iint_{\R^d\times\R^d}|x-y|^q\dd\gamma(x,y):\ \gamma\in\Pi(\mu,\nu)\right\}^{\frac1q},
$$
where $\Pi(\mu,\nu):=\left\{\gamma\in\sP_2(\R^d\times\R^d):\ (\pi_1)_\sharp\gamma=\mu,\ (\pi_2)_\sharp\gamma=\nu\right\},$ and $\pi_1,\pi_2:\R^d\times\R^d\to\R^d$ stand for the canonical projections, i.e. $\pi_1(x,y)=x$ and $\pi_2(x,y)=y$.

According to the terminology in \cite{AGS}, the Wasserstein gradient of a function $U: \sP_2(\R^d) \to \R$ at $\mu$, is an element $\pa_\mu U(\mu,\cdot)\in \overline{\nabla C_c^\infty(\mathbb R^d)}^{L^2_\mu}$ (the closure of gradients of $C_c^\infty$ functions in $L^2_\mu(\R^d;\R^d)$) and so, it is a priori defined $\mu$--almost everywhere. The theory developed in \cite{Lions, WuZhang17, GanTud19, CarPor20}, 
shows that $\pa_\mu U(\mu,\cdot)$ can be characterized by the property 
\begin{align}
\label{WassersteinDerivativeDef}
U(\cL_{\xi +  \eta}) - U(\mu) = \E\big[\langle \pa_\mu U(\mu, \xi), \eta \rangle \big] + o(\|\eta\|_2), \ \forall\ \xi,\eta,\ {\rm{with}}\ \cL_\xi=\mu.
\end{align}
Let $\cC^0(\sP_2(\R^d))$ denote the space of $W_2$--continuous functions $U: \sP_2(\R^d) \to \R$. For $k \in \{1,2\}$ we next define a subset of $\cC^k(\sP_2(\R^d))$, referred to as functions of {\it full $\cC^k$} regularity in \cite[Chapter 5]{CD1}),  as follows. By $\cC^1(\sP_2(\R^d)),$ we mean the space of functions $U\in \cC^0(\sP_2(\R^d))$ such that $\pa_\mu U$ exists for all $\mu\in\sP_2(\R^d)$ and it has a unique jointly continuous extension to $\sP_2(\R^d)\times \R^d$, which we continue to denote by 
$$ \mathbb R^d\times\sP_2(\R^d)\ni (\tilde x, \mu) \mapsto \pa_\mu U (\mu, \tilde x)\in \mathbb R^d.$$ 
Similarly, $\cC^2(\sP_2(\R^d))$ stands for the space of functions $U\in \cC^1(\sP_2(\R^d))$ such that the global version of $\partial_\mu U$ is differentiable in the sense that all the following maps exist and have unique jointly continuous extensions
\begin{align*}
&\mathbb R^d\times\sP_2(\R^d)\ni (\tilde x, \mu) \mapsto  \pa_{\tilde x\mu}U(\mu, \tilde x) \in \mathbb R^{d\times d}\ \ {\rm{and}}\\ 
&\mathbb R^{2d}\times\sP_2(\R^d)\ni (\tilde x, \bar x,  \mu) \mapsto  \pa_{\mu\mu}U(\mu, \tilde x, \bar x) \in \mathbb R^{d \times d}.
\end{align*}
We define similarly the spaces $\cC^1(\R^d\times\sP_2(\R^d))$ and $\cC^{2}(\mathbb R^d\times\sP_2(\R^d))$. In particular $\cC^{2}(\mathbb R^d\times\sP_2(\R^d))$ is the space of continuous functions $U:\mathbb R^d\times\sP_2(\R^d)\to\R$ satisfying the following

(i) $\pa_xU, \pa_{xx}U$ exist and are jointly continuous on $\R^d\times\sP_2(\R^d)$;

(ii) The following maps exist and have unique jointly continuous extensions
\begin{align*}
&\mathbb R^{2d}\times\sP_2(\R^d)\ni (x, \tilde x, \mu) \mapsto \pa_\mu U(x, \mu, \tilde x)\in \mathbb R^d\ \ {\rm{and}}\\
&\mathbb R^{2d}\times\sP_2(\R^d)\ni (x, \tilde x, \mu) \mapsto\pa_{x\mu}U(x, \mu, \tilde x) \in \mathbb R^{d \times d};
\end{align*}

(iii) Finally, the following maps exist and have unique jointly continuous extensions 
\begin{align*}
&\mathbb R^{2d}\times\sP_2(\R^d)\ni (x, \tilde x, \mu) \mapsto \pa_{\tilde x\mu}U(x, \mu, \tilde x) \in \mathbb R^{d \times d} \ \ \text{and}\\
&\mathbb R^{3d}\times\sP_2(\R^d)\ni (x, \tilde x, \bar x, \mu) \mapsto \pa_{\mu\mu}U(x, \mu, \tilde x, \bar x) \in \mathbb R^{d \times d}.
\end{align*}

We underline that for notational conventions, we always denote the `new spacial variables' appearing in Wasserstein derivatives with `tilde' symbols (for first order Wasserstein derivatives), with `bar' symbols (for second order Wasserstein derivatives) and so on, and we place them right after the corresponding measures variables. For example, when $U:\R^d\times\sP_2(\R^d)\times\R^d\to\R$ is typically evaluated as $U(x,\mu,p)$, we use the notations $\partial_\mu U(x,\mu,\tilde x,p)$, $\partial_{\tilde x\mu} U(x,\mu,\tilde x, p)$, $\partial_{\mu\mu} U(x,\mu,\tilde x,\bar x, p)$, and so on. This convention will be carried through to compositions with random variables too, for example $\partial_\mu U(x,\mu,\tilde\xi,p)$, when $\tilde\xi$ is an $\R^d$-valued random variable.

\subsection{Displacement Monotonicity} Following \cite{FourAuthor}, we can recall the displacement monotonicity condition for the final cost function $G$.

\begin{defin}\label{def:DM_G}
Let $G: \R^d \times \sP_2(\R^d) \to \R$. Assume $G$ is differentiable in the first variable and $\pa_xG$ is continuous on $\R^d\times\sP_2(\R^d)$. We say that $G$ is \emph{displacement monotone} if for all $\xi_1,\xi_2 \in L^2(\mathcal{F}_0)$
\begin{equation}\label{eq:displacementG}
\int_{\Omega}\left[\pa_xG(\xi_1(\omega),\mathcal{L}_{\xi_1})-\pa_xG(\xi_2(\omega),\mathcal{L}_{\xi_2})\right]\cdot (\xi_1(\omega)-\xi_2(\omega))\dd\mathbb{P}(\omega)\geq 0.
\end{equation} 
\end{defin}

\begin{rmk}
Assume further that $\partial_xG\in \cC^1(\R^d\times\sP_2(\R^d))$ and $\pa_{x\mu}G,\pa_{xx}G$ are bounded. Then $G$ is displacement monotone if and only if for all $\xi,\eta\in L^2(\mathcal{F}_0)$
\[
\iint_{\Omega\times\Omega}\left[ \langle \pa_{x\mu}G\(\xi(\omega), \L_\xi , \tilde \xi(\ti \omega)\) \tilde\eta(\ti \omega) , \eta(\omega) \rangle +
\langle \pa_{xx}G\(\xi(\omega), \L_\xi\) \eta(\omega), \eta(\omega) \rangle\right] \dd\P(\omega)\dd\P(\ti \omega)\geq 0,
\]
where $\tilde\xi, \tilde\eta$ are independent copies of $\xi$ and $\eta$, respectively.
\end{rmk}

We recall the following lemma from \cite[Lemma 2.3]{MesMou}.

\begin{lem}\label{lem:dismon convex}
Suppose that $G$ is displacement monotone and $\partial_xG\in \cC^1(\R^d\times\sP_2(\R^d))$. Then $\pa_{xx}G(x, \mu) \geq 0$ for all $(x,\mu)\in\R^d\times\sP_2(\R^d)$. 
\end{lem}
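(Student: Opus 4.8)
\emph{Plan of proof.} The plan is to reduce the matrix inequality $\pa_{xx}G(x_0,\mu)\ge 0$ to the scalar statement $v^\top \pa_{xx}G(x_0,\mu)v\ge 0$ for an arbitrary fixed $v\in\R^d$, and to extract it from \eqref{eq:displacementG} by testing against a localized, infinitesimal perturbation. Assume first that $x_0\in\mathrm{supp}\,\mu$. Fix $\xi\in L^2(\mathcal{F}_0)$ with $\mathcal{L}_\xi=\mu$, a radius $r>0$, and set $\eta:=v\,\mathbf{1}_{\{\xi\in B_r(x_0)\}}$, which is again in $L^2(\mathcal{F}_0)$. For $\epsilon>0$ I would apply \eqref{eq:displacementG} with $\xi_1:=\xi+\epsilon\eta$ and $\xi_2:=\xi$, so that $\xi_1-\xi_2=\epsilon\eta$ and $\mathcal{L}_{\xi_2}=\mu$.

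First I would Taylor expand the integrand to second order in $\epsilon$. Writing $\mu^\epsilon:=\mathcal{L}_{\xi+\epsilon\eta}$, the expansion in the state variable gives $\pa_x G(\xi+\epsilon\eta,\mu^\epsilon)-\pa_x G(\xi,\mu^\epsilon)=\epsilon\,\pa_{xx}G(\xi,\mu)\eta+o(\epsilon)$, while the expansion in the measure variable, via \eqref{WassersteinDerivativeDef} applied to $\pa_x G$, gives $\pa_x G(\xi,\mu^\epsilon)-\pa_x G(\xi,\mu)=\epsilon\,\tilde{\mathbb{E}}\big[\pa_{x\mu}G(\xi,\mu,\tilde\xi)\,\tilde\eta\big]+o(\epsilon)$, where $(\tilde\xi,\tilde\eta)$ is a conditionally independent copy of $(\xi,\eta)$. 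Substituting, dividing by $\epsilon^2$ and letting $\epsilon\to 0$ turns \eqref{eq:displacementG} into
\[
\int_{B_r(x_0)} v^\top \pa_{xx}G(x,\mu)\,v\,\dd\mu(x)+\iint_{B_r(x_0)\times B_r(x_0)} v^\top \pa_{x\mu}G(x,\mu,y)\,v\,\dd\mu(x)\dd\mu(y)\ge 0.
\]
Since all the perturbations keep the relevant variables in a fixed bounded region and $\mu^\epsilon\to\mu$ in $W_2$, the joint continuity of $\pa_{xx}G$ and $\pa_{x\mu}G$ coming from $\pa_x G\in\cC^1(\R^d\times\sP_2(\R^d))$ is enough to control the remainders; in particular no global boundedness of these derivatives (as in the Remark above) is needed.

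The decisive step is then a scaling argument. Set $m_r:=\mu(B_r(x_0))$, which is strictly positive because $x_0\in\mathrm{supp}\,\mu$. Dividing the displayed inequality by $m_r$, the first (diagonal) term is an average of $v^\top\pa_{xx}G(\cdot,\mu)v$ over the shrinking ball $B_r(x_0)$ and hence converges to $v^\top\pa_{xx}G(x_0,\mu)v$ as $r\to 0$, by continuity of $\pa_{xx}G$ in the state variable. The second (cross) term is bounded in absolute value by $m_r\,\sup_{B_r(x_0)\times B_r(x_0)}|v^\top\pa_{x\mu}G(\cdot,\mu,\cdot)v|$, which tends to $0$; this is exactly where the quadratic $m_r^2$ scaling of the $\pa_{x\mu}$ contribution is dominated by the linear $m_r$ scaling of the $\pa_{xx}$ contribution. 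Letting $r\to 0$ yields $v^\top\pa_{xx}G(x_0,\mu)v\ge 0$, and since $v$ is arbitrary, $\pa_{xx}G(x_0,\mu)\ge 0$ whenever $x_0\in\mathrm{supp}\,\mu$.

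Finally I would remove the support restriction. For general $x_0\in\R^d$ set $\mu_\delta:=(1-\delta)\mu+\delta\,\delta_{x_0}$, so that $x_0\in\mathrm{supp}\,\mu_\delta$ and $W_2(\mu_\delta,\mu)\to 0$ as $\delta\to 0$. The previous step gives $\pa_{xx}G(x_0,\mu_\delta)\ge 0$, and passing to the limit using the joint continuity of $\pa_{xx}G$ in $(x,\mu)$ gives $\pa_{xx}G(x_0,\mu)\ge 0$. I expect the main obstacle to be making the second-order expansion rigorous with uniform control of the remainder terms (in particular the measure-variable remainder from \eqref{WassersteinDerivativeDef}) and verifying that the cross term is genuinely of lower order; the support-extension step is routine by contrast.
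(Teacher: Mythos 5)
Your overall strategy --- a localized second-order perturbation of the displacement monotonicity inequality \eqref{eq:displacementG}, followed by a linear-versus-quadratic scaling argument that kills the cross term --- is sound in spirit, and indeed close to the argument behind the cited result (the paper itself gives no proof, deferring to \cite[Lemma 2.3]{MesMou}). Your Step-1 limit inequality is correct, as is your observation that the localization keeps all arguments in compact sets, so joint continuity of $\pa_{xx}G$ and $\pa_{x\mu}G$ suffices without the global boundedness used in the Remark.

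However, there is a genuine gap in how your two steps fit together. The scaling argument of Step 1 needs $m_r=\mu(B_r(x_0))\to 0$ as $r\to 0$: after dividing by $m_r$, the cross term is bounded by $m_r\,\sup_{B_r\times B_r}\abs{v^\top\pa_{x\mu}G(\cdot,\mu,\cdot)v}$, and this tends to $0$ only when $\mu(\{x_0\})=0$. If $\mu$ has an atom at $x_0$, then $m_r\to\mu(\{x_0\})>0$ and the limit of your displayed inequality is only
\[
v^\top\pa_{xx}G(x_0,\mu)v+\mu(\{x_0\})\,v^\top\pa_{x\mu}G(x_0,\mu,x_0)v\;\ge\;0,
\]
which does not yield the conclusion. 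Your Step 2 then applies Step 1 to $\mu_\delta:=(1-\delta)\mu+\delta\,\delta_{x_0}$, a measure that by construction has an atom of mass at least $\delta$ at $x_0$ --- precisely the case Step 1 cannot handle --- so the proof as written does not close. Two one-line repairs: (i) localize in the probability space rather than in $\R^d$: since $\mathcal{F}_0$ is atomless, take $\eta=v\,\mathbf{1}_A$ with $A$ a subset of $\{\xi\in B_r(x_0)\}$ (or of $\{\xi=x_0\}$ in the atomic case) of probability $\theta$; then the diagonal term scales like $\theta$ and the cross term like $\theta^2$ regardless of atoms of $\mu$, so Step 1 holds for every $x_0\in\mathrm{supp}\,\mu$ and Step 2 goes through with your $\mu_\delta$; or (ii) keep the ball localization but replace $\mu_\delta$ by an atomless approximation with full support, e.g. $\mu_\delta:=\mu*\mathcal{N}(0,\delta^2 I_d)$, which is absolutely continuous, has $x_0$ in its support, and converges to $\mu$ in $W_2$, after which your continuity argument finishes the proof.
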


Following \cite{MesMou}, we can recall the displacement monotonicity condition for the Hamiltonian $H$.
\begin{defin}\label{def:DM_H}
Let $H: \R^d \times \sP_2(\R^d) \times \R^d \to \R$. Assume that $H$ is differentiable in the $x$ and $p$ variables and $\pa_xH,\pa_pH$ are continuous on $\R^d\times\sP_2(\R^d)\times\R^d$. We say that $H$ is \emph{displacement monotone} if for all $\xi_1,\xi_2,p_1,p_2 \in L^2(\mathcal{F}_0)$
\begin{align*}
&  \iint_{\Omega \times \Omega}\bigg[\Big\langle -\pa_{x} H(\xi_1(\omega), \mathcal{L}_{\xi_1}, p_1(\omega)) + \pa_{x} H(\xi_2(\omega), \mathcal{L}_{\xi_2},p_2(\omega)),\; \xi_1(\omega)-\xi_2(\omega)\Big\rangle  \bigg]\\
&+ \bigg[\Big\langle \pa_{p} H(\xi_1(\omega), \mathcal{L}_{\xi_1}, p_1(\omega)) - \pa_{p} H(\xi_2(\omega), \mathcal{L}_{\xi_2},p_2(\omega)),\; p_1(\omega)-p_2(\omega)\Big\rangle  \bigg]\dd\P(\omega)\geq 0.
\end{align*} 
\end{defin}

\begin{rmk}
Assume further that $H\in \cC^2(\R^d\times\sP_2(\R^d)\times\R^d)$ and $\pa_{x\mu}H,\pa_{xx}H,\pa_{pp}H,\pa_{p\mu}H$ are bounded. Suppose that
\begin{align}\label{ineq:disp2}
&  \iint_{\Omega\times\Omega}\left[ \langle \pa_{x\mu}H\(\xi(\omega), \L_\xi , \tilde \xi(\ti \omega),p(\omega)\) \tilde \eta(\ti \omega) , \eta(\omega) \rangle +
\langle \pa_{xx}H\(\xi(\omega), \L_\xi,p(\omega)\) \eta(\omega), \eta(\omega) \rangle\right] \dd\P(\omega)\dd\P(\ti \omega)\\
&\nonumber\leq -\frac{1}{4} \iint_{\Omega\times\Omega}\bigg| [\pa_{pp}H(\xi(\omega),\mathcal{L}_{\xi},p(\omega))]^{-\frac{1}{2}}\pa_{p\mu}H(\xi(\omega),\mathcal{L}_{\xi},\tilde \xi(\tilde \omega),p(\omega))\tilde \eta(\tilde\omega)\bigg|^2\dd\P(\omega)\dd\P(\tilde\omega),
\end{align} 
for all $\xi,\eta,p\in L^2(\mathcal{F}_0)$, where $\tilde\xi,\tilde\eta$ are independent copies of $\xi$ and $\eta$, respectively. Then $H$ is displacement monotone.

We would like to underline that the main standing assumption on $H$ in \cite{FourAuthor} was the inequality \eqref{ineq:disp2}. In contrast to this, in the current manuscript we impose the inequality from Definition \ref{def:DM_H}, which is considerably weaker, and in general does not imply \eqref{ineq:disp2} (cf. \cite{MesMou}).

\end{rmk}
\subsection{Standing assumptions}

We will always assume that $G, H$ are displacement monotone in the sense of Definition \ref{def:DM_G} and Definition \ref{def:DM_H}, respectively. We also make the following regularity assumptions. 

\begin{assum}\label{assum: G} We assume that 

\begin{enumerate}
\item $G\in \cC^2(\R^d\times\sP_2(\R^d))$ and that there exists $L^G$ such that
 $\abs{\pa_{xx} G},\abs{\pa_{\mu x} G}\leq L^G$ and $G\geq -L^G$.
 
 \item All second order derivatives of $G$ are uniformly continuous with respect to the space and measure variables (the latter with respect to $W_2$).
 \end{enumerate}
\end{assum}

\begin{assum}\label{assum: H} We assume that

\begin{enumerate}
\item $H\in \cC^2(\R^d\times\sP_2(\R^d)\times\R^d)$ and there exists $L^H>0$ such that
 $\abs{\pa_{px} H},\abs{\pa_{xx} H}, \abs{\pa_{pp} H}, \abs{\pa_{x\mu} H}, \abs{\pa_{p\mu} H}\leq L^H$ and 
 \begin{equation}\label{eq:Lbdd}
 \langle\pa_pH(x,\mu,p),p\rangle-H(x,\mu,p)\geq -L^H,\ \forall\ (x,\mu,p)\in\R^d\times\sP_2(\R^d)\times\R^d.
 \end{equation}

\item $\pa_{pp} H \geq c_0 I$ for some $c_0 > 0$. 

 \item All second order derivatives of $H$ are uniformly continuous with respect to the space and measure variables (the latter with respect to $W_2$).
\end{enumerate}
\end{assum}

We underline furthermore that these assumptions are weaker than those in \cite[Assumption 3.1]{FourAuthor} in the sense that we do not require the uniform boundedness of $\pa_x G$ or $\pa_\mu G$ and moreover we need much less regularity assumptions on the data. However, we impose strong convexity condition for the Hamiltonian in the $p$ variable because of the absence of the idiosyncratic noise. Also, as mentioned above, instead of the second order displacement monotonicity condition, as imposed on $H$ in \cite{FourAuthor}, here we impose the first order condition. We denote by $L^G_2$ the Lipschitz constant of $\pa_x G$ with respect to space and $W_2$ norm in measure. Note that $|{\pa_{\mu x} G}|\leq L^G_1$ implies that $\pa_{x} G$ is Lipschitz with respect to $W_1$ norm in measure with the Lipschitz constant $L^G_1$, and so $L^G_2 \leq L^G_1$.

\begin{defin}
A constant $C$ is said to be universal if it depends only on the above quantities ($L^G, L^H,$ and $c_0$) and $T$. 
\end{defin}

\subsection{The roadmap of the well-posedness theory}

The proof of our main theorem (Theorem \ref{thm:main}) will go as follows. First we assume that we have a smooth solution, $V$, to the master equation \eqref{eq:master}. We show that $V$ is displacement monotone. This is a consequence of the displacement monotonicity assumption on $H$ and $G$ and the proof of this is inspired by the results in \cite{MesMou}. 

\smallskip

The heart of our analysis is to show that $\pa_{x} V$ is Lipschitz continuous (with respect to the Euclidean norm in $x$ and the $W_1$ distance in $\mu$) where the Lipschitz constant is universal. To achieve this, we will proceed as follows. First, the uniform boundedness of $\partial_{xx}V$ will be implied by semi-concavity estimates on $V$ in the space variable (which comes from classical optimal control arguments) and its convexity in the space variable (which is implied immediately by displacement monotonicity, c.f. Lemma \ref{lem:dismon convex}). Second, for the Lipschitz continuity of $\partial_x V$ in the measure variable we will first show that this is Lipschitz continuous with respect to the $W_2$ distance. Instead of the approach used in \cite{FourAuthor}, for this result we follow the ideas from \cite{MesMou}. From this, together with the uniform bounds on $\partial_{xx}V$, we will be able to see that a certain system of FBSDEs is well-posed and that this system gives us a representation formula for $\pa_{x\mu}V$. From here we deduce that $\pa_{x\mu}V$ is bounded by a universal constant which is equivalent to $\pa_x V$ being $W_1$-Lipschitz continuous (with the same universal constant).

\smallskip

Once these a priori bounds are proven, the well-posedness of the master equation will follow easily.

\section{Semi-concavity and displacement monotonicity of the master field}\label{sec:3}

\subsection{Semi-concavity of value functions}
\begin{lem}\label{lem: representationformula}
Assume $G,H$ satisfy Assumptions \ref{assum: G} and \ref{assum: H}. Let $V$ be a classical solution to the master equation with bounded $\pa_{xx}V$ and $\pa_{x\mu}V$ on $[0,T]$. Fix some $t_0 \in [0,T]$ and $\mu \in {\sP}_2(\R^d)$. Then there exists a $\mathbb F^{B^{0,t_0}}$-progressively measurable stochastic probability measure flow $(\rho_t)_{t\in[t_0,T]}$ solving 
\begin{equation}\label{eq:rhoV}
 d\rho(t,x) =  \Big[\frac{\beta^2}{2}\tr\big( \pa_{xx} \rho(t,x)\big) + {\rm{div}}(\rho(t,x) \pa_p H(x, \rho_t, \pa_x V(t,x,\rho_t)))\Big]\dd t-\beta\partial_x\rho(t,x)\cdot \dd B_t^0, \quad {\rm{in}}\ (t_0,T)\times\R^d
\end{equation}
with $\rho_{t_0} = \mu$ such that 
\begin{equation}\label{eq:VV}
V(t_0,x,\mu) = \inf_{\alpha} \E\left\{G(X_T^\alpha, \rho_T) + \int_{t_0}^T L(X_t^\alpha, \rho_t, \alpha_t) \dd t \right\},
\end{equation}
where $L$ is the Legendre transformation of $H$, the infimum is taken over the set of admissible controls which are progressively measurable and square integrable, and $X_t^\alpha$ satisfies the SDE 
$$\dd X_t^\alpha = \alpha_t \dd t + \beta \dd B_t^{0},\ {\rm{with}}\ X_{t_0}^\alpha = x.$$

\end{lem}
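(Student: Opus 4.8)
The plan is to first construct the equilibrium measure flow $\rho$ and then establish the control representation \eqref{eq:VV} by a verification argument based on the Itô--Wentzell formula, exploiting that $V$ solves \eqref{eq:master}.

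\emph{Step 1 (construction of $\rho$).} Since $\pa_{xx}V$ and $\pa_{x\mu}V$ are bounded on $[0,T]$, the map $(t,x,\mu)\mapsto \pa_x V(t,x,\mu)$ is Lipschitz in $x$ and, by the bound on $\pa_{x\mu}V$, Lipschitz in the measure variable with respect to $W_1$ and hence $W_2$. Combined with Assumption \ref{assum: H} (boundedness of $\pa_{px}H,\pa_{pp}H,\pa_{p\mu}H$), the feedback drift $b(t,x,\mu):=-\pa_p H(x,\mu,\pa_x V(t,x,\mu))$ is then Lipschitz in $(x,\mu)$ for the Euclidean and $W_2$ distances. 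I would therefore solve the conditional McKean--Vlasov SDE driven by the common noise, $\dd X_t = b(t,X_t,\rho_t)\,\dd t + \beta\,\dd B^0_t$ with $\rho_t=\m L(X_t\,|\,\mathcal F^0_t)$, $\m L_{X_{t_0}}=\mu$ and $X_{t_0}$ independent of $B^0$, by the standard fixed-point argument for Lipschitz McKean--Vlasov dynamics with common noise. Setting $\rho_t:=\m L(X_t\,|\,\mathcal F^0_t)$ yields an $\mathbb F^{B^{0,t_0}}$-progressively measurable flow, and the conditional Fokker--Planck derivation (Itô's formula applied against smooth test functions) shows that $\rho$ solves the forward SPDE \eqref{eq:rhoV} with $\rho_{t_0}=\mu$.

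\emph{Step 2 (verification via Itô--Wentzell).} With $\rho$ fixed, I set $u(t,x):=V(t,x,\rho_t)$, so that $u(t_0,x)=V(t_0,x,\mu)$ and $u(T,x)=G(x,\rho_T)$ by the terminal condition in \eqref{eq:master}. For an arbitrary admissible control $\alpha$ with associated state $X^\alpha$, I would compute $\dd[u(t,X^\alpha_t)]$ via the Itô--Wentzell formula for functionals of a common-noise-driven conditional law flow. The crucial point is that the common noise contributes to the measure derivatives precisely through $\Delta_{{\rm{com}}}$: the quadratic variation of $X^\alpha$ produces $\frac{\beta^2}{2}\tr(\pa_{xx}V)$, the common-noise diffusion of $\rho_t$ produces $\frac{\beta^2}{2}\int\tr(\pa_{\tilde x\mu}V)\dd\rho_t+\frac{\beta^2}{2}\iint\tr(\pa_{\mu\mu}V)\dd\rho_t\dd\rho_t$, and the cross-variation between the common noise of $X^\alpha$ and of $\rho_t$ yields $\beta^2\int\tr(\pa_{x\mu}V)\dd\rho_t$, so that together they assemble $\tfrac{\beta^2}{2}\Delta_{{\rm{com}}}V$. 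The first-order transport term of $\rho$, tested against $\pa_\mu V$, reproduces exactly $(\m NV)(t,X^\alpha_t,\rho_t)$. Substituting $\pa_t V$ by means of the master equation \eqref{eq:master} then collapses the entire drift of $u(t,X^\alpha_t)$ to $H(X^\alpha_t,\rho_t,\pa_x V(t,X^\alpha_t,\rho_t))+\pa_x V(t,X^\alpha_t,\rho_t)\cdot\alpha_t$.

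\emph{Step 3 (optimality).} By the Fenchel inequality for the Legendre pair $(H,L)$, namely $H(x,\mu,p)+\langle a,p\rangle\ge -L(x,\mu,a)$ with equality iff $a=-\pa_pH(x,\mu,p)$, the drift above is bounded below by $-L(X^\alpha_t,\rho_t,\alpha_t)$. Integrating from $t_0$ to $T$, taking expectations so that the martingale part drops, and using the boundary identifications of $u$ yields $\E[G(X^\alpha_T,\rho_T)]+\E\int_{t_0}^T L(X^\alpha_t,\rho_t,\alpha_t)\,\dd t\ge V(t_0,x,\mu)$ for every admissible $\alpha$. For the reverse inequality I would run the feedback control $\alpha^*_t:=-\pa_p H(X^*_t,\rho_t,\pa_x V(t,X^*_t,\rho_t))$ with $X^*_{t_0}=x$, which is well-posed by the Lipschitz bounds of Step 1 and along which the Fenchel inequality is an equality; this gives $V(t_0,x,\mu)=\inf_\alpha(\cdots)$, as claimed.

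The main obstacle is the rigorous justification of the Itô--Wentzell expansion in Step 2 and the matching of the second-order common-noise terms with $\Delta_{{\rm{com}}}V$; this relies on the full regularity of the classical solution ($\cC^2$ in the space and measure variables, $\cC^1$ in time) and on careful bookkeeping of the cross-variation between the state $X^\alpha$ and the measure flow $\rho_t$. A secondary technical difficulty, arising because $\pa_x V$ and $\pa_\mu V$ are only required to have linear growth in $x$, is to obtain moment bounds on $X^\alpha$ and the integrability needed both to promote the local martingale to a true martingale and to ensure finiteness of the cost in \eqref{eq:VV}; here I would exploit the coercivity of $L$ coming from $\pa_{pp}H\ge c_0 I$ together with the square-integrability of admissible controls.
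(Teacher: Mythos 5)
Your proposal is correct, but it is worth noting how it relates to the paper's treatment: the paper does not actually prove this lemma at all. Its ``proof'' is a one-line deferral, calling the statement ``essentially a folklore result'' and citing Remark 2.10(ii) of the Gangbo--M\'esz\'aros--Mou--Zhang paper. What you have written is precisely the standard verification argument that underlies that folklore: (i) the bounds on $\pa_{xx}V$ and $\pa_{x\mu}V$ plus Assumption \ref{assum: H} make the feedback drift $-\pa_pH(x,\mu,\pa_xV)$ Lipschitz, so the conditional McKean--Vlasov SDE with common noise is well-posed and its conditional law solves \eqref{eq:rhoV}; (ii) the chain rule along the conditional law flow, applied to $u(t,x)=V(t,x,\rho_t)$, assembles exactly the operators $\m N V$ and $\tfrac{\beta^2}{2}\Delta_{\rm com}V$ (your bookkeeping of the three second-order contributions --- $\pa_{xx}$, $\pa_{\tilde x\mu}+\pa_{\mu\mu}$ from the flow, and the cross term $2\pa_{x\mu}$ --- matches the paper's definition of $\Delta_{\rm com}$), so the master equation collapses the drift of $u(t,X^\alpha_t)$ to $H+\langle\alpha_t,\pa_xV\rangle$; (iii) Fenchel's inequality with the convention $H(x,\mu,p)=\sup_a[-\langle a,p\rangle-L(x,\mu,a)]$ gives the lower bound for every admissible control and equality along $\alpha^*=-\pa_pH$, which is admissible by the Lipschitz bounds of Step 1. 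Your sign conventions are consistent with the paper's (the equilibrium velocity is $-\pa_pH$, as in \eqref{eq:rhoV} and \eqref{eq:defX}), and your use of \eqref{eq:Lbdd} and $G\ge -L^G$ to keep the cost bounded below, together with localization of the stochastic integral, addresses the genuine technical points (the paper itself invokes exactly the bound $L\ge -L^H$ later, in the proof of Proposition \ref{cor:Vsemi-concave}). In short: same mathematical content as the cited folklore, but your write-up is self-contained where the paper is not.
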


\begin{proof}
This is essentially a folklore result discussed in \cite[Remark 2.10 part (ii)]{FourAuthor}.
\end{proof}

\begin{prop}\label{cor:Vsemi-concave}

Assume that the assumptions in Lemma \ref{lem: representationformula} hold. 
Then $V$  is semi-concave in the $x$-variable with a semi-concavity constant $(1+T)C$. 
\end{prop}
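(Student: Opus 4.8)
The plan is to exploit the stochastic control representation furnished by Lemma~\ref{lem: representationformula}. The crucial structural feature is that the equilibrium measure flow $(\rho_t)_{t\in[t_0,T]}$ solving \eqref{eq:rhoV} depends only on the initial law $\mu$ and \emph{not} on the starting position $x$ of the individual state process $X^\alpha$. Hence, with $\mu$ (and therefore $(\rho_t)$) frozen, the map $x\mapsto V(t_0,x,\mu)$ is the value function of a standard stochastic control problem in a fixed environment, and semi-concavity in $x$ reduces to the classical optimal-control semi-concavity estimate. Concretely I would establish the second-difference bound
\begin{equation*}
V(t_0,x+h,\mu)+V(t_0,x-h,\mu)-2V(t_0,x,\mu)\leq (1+T)C\,|h|^2,\qquad \forall\, h\in\R^d,
\end{equation*}
for a universal constant $C$, which is equivalent to semi-concavity in $x$ with constant $(1+T)C$.

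First I would fix $x$ and select an $\varepsilon$-optimal admissible control $\alpha$ for the problem defining $V(t_0,x,\mu)$, with trajectory $X^\alpha_t = x+\int_{t_0}^t\alpha_s\,ds+\beta B^{0,t_0}_t$. I then use the \emph{same} control $\alpha$ as an (obviously admissible) competitor for both perturbed initial positions $x\pm h$; since the shift is deterministic, the corresponding trajectories are simply $X^\alpha_t\pm h$. Combining suboptimality of $\alpha$ at $x\pm h$ with its $\varepsilon$-optimality at $x$, and noting that the measure arguments $\rho_t,\rho_T$ cancel identically (being independent of $x$), I obtain
\begin{align*}
&V(t_0,x+h,\mu)+V(t_0,x-h,\mu)-2V(t_0,x,\mu)\\
&\quad\leq \E\Big[G(X^\alpha_T+h,\rho_T)+G(X^\alpha_T-h,\rho_T)-2G(X^\alpha_T,\rho_T)\Big]\\
&\quad\quad+\E\int_{t_0}^T\!\!\Big[L(X^\alpha_t+h,\rho_t,\alpha_t)+L(X^\alpha_t-h,\rho_t,\alpha_t)-2L(X^\alpha_t,\rho_t,\alpha_t)\Big]\,dt+2\varepsilon.
\end{align*}
Letting $\varepsilon\to0$, the estimate is reduced to controlling, uniformly, the second differences of $G$ and of $L$ in their \emph{first} (state) variable.

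The two remaining ingredients are pointwise semi-concavity bounds. For $G$, Assumption~\ref{assum: G} gives $|\pa_{xx}G|\leq L^G$, so $G(\cdot,\rho_T)$ is semi-concave in $x$ with constant $L^G$ and the terminal bracket is at most $L^G|h|^2$. For $L$, I would use Legendre duality: writing $p^\ast(x,\mu,a)$ for the maximizer in $L(x,\mu,a)=\sup_p\{\langle a,p\rangle-H(x,\mu,p)\}$ and differentiating the first-order relation $a=\pa_pH(x,\mu,p^\ast)$, one gets $\pa_x p^\ast=-(\pa_{pp}H)^{-1}\pa_{px}H$ and hence
\begin{equation*}
\pa_{xx}L=-\pa_{xx}H+\pa_{xp}H\,(\pa_{pp}H)^{-1}\,\pa_{px}H.
\end{equation*}
By Assumption~\ref{assum: H}, $|\pa_{xx}H|,|\pa_{px}H|\leq L^H$ and $\pa_{pp}H\geq c_0 I$, so this is bounded above by $L^H+(L^H)^2/c_0=:C_L$; thus the running bracket is at most $C_L|h|^2$. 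Combining and integrating in time gives $\big(L^G+(T-t_0)C_L\big)|h|^2\leq(1+T)C|h|^2$ with $C:=\max\{L^G,C_L\}$ universal.

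I expect the main obstacle to be the semi-concavity estimate for $L$, that is, transferring the bounds on $\pa_{xx}H,\pa_{px}H$ together with the nondegeneracy $\pa_{pp}H\geq c_0 I$ through the Legendre transform. This is precisely where the strong-convexity Assumption~\ref{assum: H}(2) (absent in the nondegenerate-noise setting of \cite{FourAuthor}) is indispensable: without a lower bound on $\pa_{pp}H$ the inverse $(\pa_{pp}H)^{-1}$ need not be bounded and $\pa_{xx}L$ could be uncontrolled from above. A secondary, routine point is the justification that an $\varepsilon$-optimal control may be used and that the deterministic spatial shift preserves admissibility and progressive measurability; since $h$ is deterministic this raises no measurability issue and the measure-flow arguments cancel exactly, so no monotonicity of $G$ or $H$ in the measure variable is needed here.
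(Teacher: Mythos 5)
Your proposal is correct and follows essentially the same route as the paper's proof: fix an $\varepsilon$-optimal control at $x$, reuse it as a competitor at $x\pm h$ (whose trajectories are exact translates, the measure flow being unchanged since it depends only on $\mu$), and reduce the second-difference bound to the semi-concavity of $G$ and $L$ in the state variable. The only difference is that you spell out, via the Legendre-transform identity $\partial_{xx}L=-\partial_{xx}H+\partial_{xp}H(\partial_{pp}H)^{-1}\partial_{px}H$, the semi-concavity of $L$ that the paper merely asserts from Assumption \ref{assum: H} --- a worthwhile detail, since the strong convexity $\partial_{pp}H\geq c_0 I$ is indeed used there and not only the bound on $\partial_{xx}H$.
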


The results of this proposition are certainly well-known for experts (see for instance \cite{Cannarsa04} for the deterministic setting, i.e. when $\beta=0$, and \cite{BuckCanQui} for a similar stochastic control problem). However we were unable to find a reference that matches our exact assumptions. Hence for completeness we reprove it. 

\begin{proof}[Proof of Proposition \ref{cor:Vsemi-concave}]
By (1) in Assumption \ref{assum: H} (specifically the bound on $\pa_{xx} H$) and (1) in Assumption \ref{assum: G}  we have that $L$ and $G$ are semi-concave (in $x$) with a universal constant. Note that \eqref{eq:Lbdd} implies that $L$ is bounded from below. Fix some $(t_0,x,\mu)\in [0,T]\times \R^d\times \sP_2(\R^d)$. Let $(\rho_t)_{t\in [t_0,T]}$ be given in \eqref{eq:rhoV} and let $(\alpha_t)_{t\in[t_0,T]}$ be an associated $\varepsilon$-optimal control for \eqref{eq:VV}. Fix some $\lambda \in \R^d$. Consider the exact same control $(\alpha_t)_{t\in[t_0,T]}$ as a proposed control for the problem initiated at $(t_0,x\pm\lambda)$. Note that the solution to
\[
\left\{
\begin{array}{ll}
\dd X_t^{\pm} = \alpha_t\dd t + \beta \dd B_t^{0}, & t\in(t_0,T), \\
X_{t_0}^{\pm} = x \pm \lambda
\end{array}
\right.
\] 
is simply $X_t^\alpha\pm \lambda$.
We get
\[
V(t_0, x + \lambda,\mu) \leq \E\left\{G(X_T^+,\rho_T) + \int_{t_0}^T L(X_t^+, \rho_t, \alpha_t) \dd t \right\}.
\]
By a symmetric argument we get
\[
V(t_0, x - \lambda,\mu) \leq \E\left\{G(X_T^-,\rho_T) + \int_{t_0}^T L(X_t^-,\rho_t, \alpha_t) \dd t \right\}
\]
and so
\begin{align*}
\frac{V(t, x + \lambda,\mu) + V(t,x - \lambda,\mu)}{2} - V(t,x,\mu) -\varepsilon &\leq \E\left\{ \frac{G(X_T^+,\rho_T) + G(X_T^-,\rho_T)}{2} - G(X_T,\rho_T) \right\}\\
&+ \E\left\{\int_{t_0}^T \frac{L(X_t^+, \rho_t, \alpha_t)+ L(X_t^-, \rho_t, \alpha_t)}{2} - L(X_t,\rho_t, \alpha_t) \dd t \right\} \\
&\leq C \abs{\lambda}^2 + TC\abs{\lambda}^2.
\end{align*}
By the arbitrariness of $\varepsilon>0$ we conclude as desired. 
\end{proof}

{\subsection{Propagation of displacement monotonicity and a priori $W_2$-Lipschitz continuity}

Suppose that $V$ is a classical solution to the master equation \eqref{eq:master} with bounded $\pa_{xx}V$ and $\pa_{x\mu}V$ on $[0,T]$. Under Assumption \ref{assum: H}, we have that $(t,x,\mu)\mapsto \pa_pH(x,\pa_xV(t,x,\mu),\mu)$ is continuous with respect to $t$ and globally Lipschitz continuous with respect to $x$ and the $W_1$ distance in $\mu$. Then the following McKean--Vlasov SDE admits a strong solution $X^\xi$
\begin{equation}\label{eq:defX}
X_t^{\xi}=\xi-\int_{t_0}^tD_pH(X_s^{\xi},\pa_xV(s,X_s^{\xi},\rho_s),\rho_s)\dd s+\beta B_t^{0,t_0},\quad t\in [t_0,T]
\end{equation}
where $\xi\in L^2(\mathcal{F}_{t_0})$ and $\rho_s:=\cL_{X_s^\xi|\mathcal{F}_s^0}$. We define 
\begin{equation}\label{eq:defYZ}
Y_t^\xi:=\pa_xV(t,X_t^\xi,\rho_t)\quad\text{and}\quad Z_t^{0,\xi}=\pa_{xx}V(t,X_t^\xi,\rho_t)+\tilde\E_{\mathcal{F}_t}[\pa_{x\mu}V(t,X_t^\xi,\rho_t,\tilde X_t^\xi)],\quad t\in [t_0,T].
\end{equation}
Then, we have that $(X^\xi,Y^\xi,Z^{0,\xi})$ is a strong solution to the following McKean--Vlasov FBSDE on $[t_0,T]$ associated to the master equation \eqref{eq:master}
\begin{equation}\label{eq:FBSDEnew}
\left\{
\begin{array}{l}
\displaystyle X_t^{\xi}=\xi-\int_{t_0}^t\pa_pH(X_s^{\xi},\rho_s,Y_s^{\xi})\dd s+\beta B_t^{0,t_0}, \\
\displaystyle Y_t^{\xi}=\partial_xG(X_T^{\xi},\rho_T)-\int_t^T \partial_xH(X_s^\xi,\rho_s,Y_s^\xi)\dd s-\beta\int_t^TZ_s^{0,\xi}\cdot \dd B_s^0.
\end{array}
\right.
\end{equation}

A crucial step in our analysis is the a priori propagation of the displacement monotonicity for the solution of the master equation. In contrast to the approach present in \cite{FourAuthor}, where the a priori propagation of the second order condition has been obtained via differentiating the master equation itself, here we follow a different route. We show the propagation of the first order condition, using the first order assumption (from Definition \ref{def:DM_H}) on $H$. This will be done by relying on the solution to the MFG system \eqref{eq:SPDE}. This idea is inspired from \cite{MesMou} to show uniqueness of solutions to the MFG system, where a similar analysis has been carried out in the absence of common noise.

\begin{prop}\label{prop:monotone}
Suppose that $G,H$ are displacement monotone and satisfy Assumptions \ref{assum: G} and \ref{assum: H} and that $V$ is a classical solution to the master equation with bounded $\pa_{xx}V$ and $\pa_{x\mu}V$ on $[0,T]$. Then for each fixed $t$, we have that $V(t, \cdot,\cdot)$ is displacement monotone. 
\end{prop}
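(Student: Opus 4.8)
The plan is to exploit the FBSDE characterization \eqref{eq:FBSDEnew} together with the decoupling relation $Y_t^\xi = \pa_x V(t, X_t^\xi, \rho_t)$, and to propagate the monotonicity \emph{backward} in time from $T$ to $t_0$. Fix $t_0$ and pick $\xi_1,\xi_2 \in L^2(\mathcal{F}_{t_0})$ with the desired laws. Solving \eqref{eq:FBSDEnew} with each initial datum produces triples $(X^{\xi_i}, Y^{\xi_i}, Z^{0,\xi_i})$ together with conditional measure flows $\rho_t^i = \mathcal{L}_{X_t^{\xi_i}\mid\mathcal{F}_t^0}$; here the standing hypotheses that $V$ is a classical solution with bounded $\pa_{xx}V$ and $\pa_{x\mu}V$ guarantee the well-posedness of the McKean--Vlasov SDE \eqref{eq:defX} and hence of the whole system. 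The quantity to track is
\begin{equation*}
\Phi(t) := \E\big[\li Y_t^{\xi_1} - Y_t^{\xi_2},\; X_t^{\xi_1} - X_t^{\xi_2}\ri\big],
\end{equation*}
and the goal is to show $\Phi(t_0)\geq 0$, which is exactly the displacement monotonicity of $V(t_0,\cdot,\cdot)$, since $X_{t_0}^{\xi_i} = \xi_i$ and $Y_{t_0}^{\xi_i} = \pa_x V(t_0, \xi_i, \mathcal{L}_{\xi_i})$.

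First I would establish the terminal sign $\Phi(T)\geq 0$. Writing $\delta X_t := X_t^{\xi_1} - X_t^{\xi_2}$ and $\delta Y_t := Y_t^{\xi_1} - Y_t^{\xi_2}$, the terminal condition gives $Y_T^{\xi_i} = \pa_x G(X_T^{\xi_i}, \rho_T^i)$. Conditioning on $\mathcal{F}_T^0$ renders the conditional laws $\rho_T^i$ deterministic, so the displacement monotonicity of $G$ (Definition \ref{def:DM_G}), applied conditionally, yields $\E\big[\li \delta Y_T, \delta X_T\ri \mid \mathcal{F}_T^0\big]\geq 0$, and taking expectations gives $\Phi(T)\geq 0$.

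Next I would differentiate $\Phi$. Because the common-noise increments $\beta B_t^{0,t_0}$ are identical in the two forward equations, $\delta X$ is of bounded variation with $\dd(\delta X_t) = -[\pa_p H(X_t^{\xi_1},\rho_t^1,Y_t^{\xi_1}) - \pa_p H(X_t^{\xi_2},\rho_t^2,Y_t^{\xi_2})]\dd t$, while the backward equation gives $\dd(\delta Y_t) = [\pa_x H(X_t^{\xi_1},\rho_t^1,Y_t^{\xi_1}) - \pa_x H(X_t^{\xi_2},\rho_t^2,Y_t^{\xi_2})]\dd t + \beta\, \delta Z_t^0 \cdot \dd B_t^0$. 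Applying the Itô product rule (there is no cross-variation since $\delta X$ has bounded variation) and using moment bounds to discard the expectation of the stochastic integral, I obtain
\begin{equation*}
\frac{\dd}{\dd t}\Phi(t) = \E\big[\li \delta_x H, \delta X_t\ri - \li \delta Y_t, \delta_p H\ri\big],
\end{equation*}
where $\delta_x H$ and $\delta_p H$ denote the corresponding differences of $\pa_x H$ and $\pa_p H$ evaluated along the two solutions. Conditioning on $\mathcal{F}_t^0$ and invoking the displacement monotonicity of $H$ (Definition \ref{def:DM_H}) with $p_i = Y_t^{\xi_i}$ shows $\E\big[-\li \delta_x H, \delta X_t\ri + \li \delta_p H, \delta Y_t\ri \mid \mathcal{F}_t^0\big]\geq 0$, i.e. $\frac{\dd}{\dd t}\Phi(t)\leq 0$. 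Hence $\Phi$ is non-increasing on $[t_0,T]$, so $\Phi(t_0)\geq \Phi(T)\geq 0$, which is the desired conclusion.

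The main obstacle is the correct handling of the common noise: the relevant ``laws'' entering $H$ and $G$ along the characteristics are the \emph{random} conditional laws $\rho_t^i = \mathcal{L}_{X_t^{\xi_i}\mid\mathcal{F}_t^0}$, whereas Definitions \ref{def:DM_G} and \ref{def:DM_H} are phrased for deterministic laws. I would resolve this by working on the enlarged space $\tilde\Omega$ with conditionally independent copies, so that conditionally on $\mathcal{F}_t^0$ the conditional laws are deterministic and the monotonicity inequalities apply verbatim; the resulting conditional inequalities are then integrated against $\mathbb{P}_0$. A secondary technical point, given that $\pa_x V$ is allowed to grow linearly in $x$, is to produce sufficient integrability (via the boundedness of $\pa_{xx}V,\pa_{x\mu}V$ and finite moments of $X^{\xi_i}$) to justify both differentiating under the expectation and treating the stochastic integral as a genuine martingale.
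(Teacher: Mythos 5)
Your proposal is correct and follows essentially the same strategy as the paper: both arguments track $\E\big[\langle \delta Y_t,\delta X_t\rangle\big]$ along the coupled characteristics issued from $\xi_1,\xi_2$, obtain a sign for the drift from the displacement monotonicity of $H$, and use the displacement monotonicity of $G$ at the terminal time; your splitting into $\Phi(T)\geq 0$ and $\Phi'\leq 0$ is just a rephrasing of the paper's step of integrating its identity \eqref{eq:diffdis} from $t$ to $T$ and taking expectations. The one substantive difference is that you take the backward dynamics of $Y_t^{\xi_i}=\pa_x V(t,X_t^{\xi_i},\rho_t^i)$ off the shelf by citing \eqref{eq:FBSDEnew}, whereas the paper cannot differentiate $\pa_x V$ along the flow directly -- a classical solution need not possess the third-order derivatives ($\pa_{xxx}V$, $\pa_{xx\mu}V$, etc.) that such a chain-rule computation would require -- and therefore devotes the bulk of its proof to deriving precisely these dynamics (its equation \eqref{eq:wi}) by mollifying the backward SPDE of the MFG system \eqref{eq:SPDEi} and applying the It\^o--Wentzell formula before passing to the limit. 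Since \eqref{eq:FBSDEnew} is asserted in the paper just before the proposition, your reliance on it is defensible, but a fully self-contained proof would need to include that mollification step; on the other hand, your treatment of the random conditional measures $\rho_t^i$ by conditioning on $\mathcal{F}_t^0$ before invoking Definitions \ref{def:DM_G} and \ref{def:DM_H} is correct and in fact more explicit than what the paper records.
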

\begin{proof} For any $t_0\in [0,T]$ and $\xi_i\in \mathbb L^2(\mathcal{F}_{t_0})$, $i=1,2$, let $X_t^i$ be the strong solution of the stochastic differential equation
\begin{equation}\label{eq:Xi}
X_t^i=\xi_i-\int_{t_0}^t\pa_pH(X_s^i,\rho_s^i,\pa_{x}V(s,X_s^i,\rho_s^i))\dd s+\beta B_t^{0,t_0},\quad i=1,2
\end{equation}
where $\rho_t^i:=\mathcal{L}_{X_t^i|\mathcal{F}_t^0}$. Define $u_i(t,x):=V(t,x,\rho_t^i)$ and $v_i(t,x):=\beta\mathbb E_{\mathcal{F}_t}[\pa_\mu V_i(t,x,\rho_t^i,\tilde X_t^{i})]$. Then $(\rho^i,u_i,v_i)$ is a classical solution of the mean field game system
\begin{equation}\label{eq:SPDEi}
\left\{
\begin{array}{lll}
\displaystyle \dd u_i(t, x)&\displaystyle =  - \Big[\tr\big(\frac{\beta^2}{2} \pa_{xx} u_i(t,x) + \beta\partial_x v_i^\top(t,x)\big) -H(x,\rho^i(t,\cdot),\pa_x u_i(t,x))\Big]\dd t +  v_i(t,x)\cdot \dd B_t^0, & (t_0,T)\times\R^d,\\
\displaystyle \dd \rho^i(t,x)&\displaystyle =  \Big[\frac{\beta^2}{2}\tr\big( \pa_{xx} \rho^i(t,x)\big) + {\rm{div}}(\rho^i(t,x) \pa_p H(x, \rho^i(t,\cdot), \pa_x u_i(t,x)))\Big]\dd t-\beta\partial_x\rho^i(t,x)\cdot \dd B_t^0, & (t_0,T)\times\R^d,\\
\displaystyle&\displaystyle \rho^i(t_0,\cdot) = \mu_i,\  u_i(T,\cdot) = G(\cdot, \rho^i(T,\cdot)), & \R^d,
\end{array}
\right.
\end{equation}
where $\mu_i=\mathcal{L}_{\xi_i}$. To obtain our desired result, we need to work with $\pa_x u_i.$ By the regularity we assumed for $V$ in $x$, we cannot directly differentiate the backward SPDE in \eqref{eq:SPDEi} in $x$. Therefore we will mollify the function $u_i$. Let $\{\zeta_n\}_n$ be a sequence of densities in $C_c^\infty(B_{\frac{1}{n}}(0))$, as standard convolution kernels. Define
\begin{equation}\label{eq:uin}
u_{i,n}(t,x):=\int_{\mathbb R^d}u_i(t,x-y)\zeta_n(y)\dd y,\quad w_{i,n}(t,x):=\pa_{x}u_{i,n}(t,x)\quad\text{and}\quad w_{i}:=\pa_x u_i(t,x).
\end{equation}
Then
\begin{align}\label{nBSPDE}
 \dd u_{i,n}(t, x)&=  \int_{\mathbb R^d}\zeta_n(y)v_i(t,x-y)\dd y\cdot \dd B_t^0\\
 &- \Big[\frac{\beta^2}{2} \Delta u_{i,n}(t,x) +\int_{\mathbb R^d}\zeta_n(y) \big[\beta{\rm div}_x v_i(t,x-y) -H(x-y,\rho_t^i,w_i(t,x-y))\dd y\Big]\dd t. \nonumber
\end{align}
We then differentiate \eqref{nBSPDE} in $x$ and obtain
\begin{align}\label{paxBSPDE}
\dd w_{i,n}(t, x)&=\int_{\mathbb R^d}\zeta_n(x-y)\pa_xv_i(t,y)\dd y\cdot \dd B_t^0 - \Big[\frac{\beta^2}{2} \Delta w_{i,n}(t,x)\nonumber\\
 & - \int_{\mathbb R^d}\zeta_n(y)\big[ \pa_x H(x-y,\rho_t^i,w_i(t,x-y))+\pa_pH(x-y,\rho_t^i,w_i(t,x-y))\pa_xw_i(t,x-y)\big]\dd y\nonumber\\
 &+\beta\int_{\mathbb R^d}\pa_x\zeta_n(y){\rm div}_x v_i(t,x-y)\dd y\Big]\dd t.
\end{align}
By the It\^o--Wentzell formula, we have
\begin{align*}
\dd w_{i,n}(t,X_t^{i})&=\bigg\{\int_{\mathbb R^d}\zeta_n(y)\big[\pa_xH(X_t^{i}-y,\rho_t^i,w_i(t,X_t^i-y))\\
&+\pa_pH(X_t^i-y,\rho_t^i,w_i(t,X^i_t-y))\pa_xw_i(t,X_t^i-y)\big]\dd y-\pa_pH(X_t^i,\rho_t^i,w_i(t,X_t^i))\pa_{x}w_{i,n}(t,X_t^i)\bigg\}\dd t\\
&+\Big[\int_{\mathbb R^d}\zeta_n(X_t^i-y)\pa_xv_i(t,y)\dd y+\beta \pa_xw_{i,n}(t,X_t^i)\Big]\cdot\dd B_t^{0}.
\end{align*}
Letting $n\to+\infty$ in the above equation, we have
\begin{align}\label{eq:wi}
\dd w_i(t,X_t^{i})=\pa_xH(X_t^{i},\rho_t^i,w_i(t,X_t^i))\dd t+\left[\pa_xv_i(t,X_t^i)+\beta \pa_xw_i(t,X_t^i)\right]\cdot\dd B_t^{0}.
\end{align}
Now set $\bar X_t:=X_t^1-X_t^2$. Then by It\^o's formula we have
\begin{align}\label{eq:diffdis}
\dd\langle w_1(t,X_t^1)-w_2(t,X_t^2),\bar X_t\rangle&=\langle \bar X_t,\dd(w_1(t,X_t^1)-w_2(t,X_t^2))\rangle+\langle w_1(t,X_t^1)-w_2(t,X_t^2),\dd\bar X_t\rangle\nonumber\\
&=\Big[\langle \bar X_t,\pa_xH(X_t^1,\rho^1_t,w_1(t,X_t^1))-\pa_xH(X_t^2,\rho^2_t,w_2(t,X_t^2))\rangle\nonumber\\
&-\langle w_1(t,X_t^1)-w_2(t,X_t^2),\pa_pH(X_t^1,\rho^1_t,w_1(t,X_t^1))-\pa_pH(X_t^2,\rho^2_t,w_2(t,X_t^2))\rangle\Big]\dd t\nonumber\\
&+\Big[\bar X_t^{\top}\left(\pa_xv_1(t,X_t^1)-\pa_xv_2(t,X_t^2)+\beta(\pa_xw_1(t,X_t^1)-\pa_xw_2(t,X_t^2))\right)\Big]\cdot\dd B_t^0.
\end{align}
Integrating both sides from $t$ to $T$ and taking expectation, we obtain
\begin{align*}
\mathbb E\Big[\langle w_1(t,X_t^1)-w_2(t,X_t^2),\bar X_t\rangle\Big]&=\mathbb E\Big[\langle\pa_xG(X_T^1,\rho_T^1)-\pa_xG(X_T^2,\rho_T^2),\bar X_T\rangle\Big]\\
&-\int_t^T\mathbb E\Big[\langle \bar X_s,\pa_xH(X_s^1,\rho_s^1,w_1(s,X_s^1))-\pa_xH(X_s^2,\rho_s^2,w_2(s,X_s^2))\rangle\\
&-\langle w_1(s,X_s^1)-w_2(s,X_s^2),\pa_pH(X_s^1,\rho_s^1,w_1(s,X_s^1))-\pa_pH(X_s^2,\rho_s^2,w_2(s,X_s^2))\rangle\Big]\dd s\\
&\ge 0,
\end{align*}
where in the last line we have used the displacement monotonicity conditions of $G$ and $H$. By the definition of $w_i$, we have that $V(t,\cdot,\cdot)$ is displacement monotone for each $t\in [0,T]$.
\end{proof}

\begin{prop}\label{prop:Lip}
Suppose that $G,H$ satisfy Assumptions \ref{assum: G} and \ref{assum: H} and that $V$ is a classical solution to the master equation with bounded $\pa_{xx}V$ and $\pa_{x\mu}V$ on $[0,T]$. Suppose moreover, that $V(t,\cdot,\cdot)$ satisfies the displacement monotonicity condition for each $t\in [0,T]$. Then  $\pa_x V$ is Lipschitz continuous in $\mu$ with respect to the $W_2$ metric and the Lipschitz constant depends only on the data and $\|\pa_{xx}V\|_{L^\infty}$. 
\end{prop}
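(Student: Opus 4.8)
The plan is to compare two copies of the characteristic flow associated with the master equation, in the spirit of \cite{MesMou}. Fix $t_0\in[0,T]$ and $\mu,\nu\in\sP_2(\R^d)$, and pick $\xi_1,\xi_2\in L^2(\mathcal{F}_{t_0})$, independent of $B^0$, with $\mathcal{L}_{\xi_1}=\mu$, $\mathcal{L}_{\xi_2}=\nu$ realizing an optimal coupling, so that $\E|\xi_1-\xi_2|^2=W_2^2(\mu,\nu)$. Since bounded $\pa_{xx}V$ and $\pa_{x\mu}V$ make the drift in \eqref{eq:defX} Lipschitz, for each $i$ the McKean--Vlasov flow and the associated FBSDE \eqref{eq:FBSDEnew} are well posed, producing $(X^i,Y^i,Z^{0,i})$ with $\rho^i_t=\mathcal{L}_{X^i_t\mid\mathcal{F}^0_t}$ and, crucially, $Y^i_{t_0}=\pa_xV(t_0,\xi_i,\mathcal{L}_{\xi_i})$. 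Writing $\delta X:=X^1-X^2$, $\delta Y:=Y^1-Y^2$, the goal is the a priori estimate $\E|\delta Y_{t_0}|^2\le C\,\E|\delta X_{t_0}|^2$ with $C$ depending only on the data and $\|\pa_{xx}V\|_{L^\infty}$; taking the optimal coupling then turns this into the claimed $W_2$--Lipschitz bound for $\pa_xV$.

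The engine is an It\^o computation for $\langle\delta X_t,\delta Y_t\rangle$. Because $\beta$ is constant and both forward equations are driven by the same $B^0$, the common noise cancels in $\delta X$: this process has no martingale part, so $[\delta X,\delta Y]=0$ and the stochastic integral drops out in expectation. Integrating from $t_0$ to $T$ gives
\[
\E\langle\delta X_{t_0},\delta Y_{t_0}\rangle=\E\langle\delta X_T,\delta Y_T\rangle-\int_{t_0}^T\E\big[\langle\delta X_t,\pa_xH^1-\pa_xH^2\rangle-\langle\pa_pH^1-\pa_pH^2,\delta Y_t\rangle\big]\dd t,
\]
with the shorthand $\pa_\bullet H^i:=\pa_\bullet H(X^i_t,\rho^i_t,Y^i_t)$. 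Applying the displacement monotonicity of $G$ conditionally on $\mathcal{F}^0_T$ (to $\delta Y_T=\pa_xG(X^1_T,\rho^1_T)-\pa_xG(X^2_T,\rho^2_T)$) shows the terminal term is nonnegative, while the displacement monotonicity of $H$ from Definition \ref{def:DM_H}, applied conditionally on $\mathcal{F}^0_t$ to the pair $(X^i_t,Y^i_t)$, shows the integrand is nonpositive; this already recovers the monotonicity of $V(t_0,\cdot,\cdot)$. To make it quantitative I would split $\pa_pH^1-\pa_pH^2$ into a pure momentum increment plus a state/measure increment and invoke $\pa_{pp}H\ge c_0 I$ together with the bounds $|\pa_{px}H|,|\pa_{xx}H|,|\pa_{p\mu}H|,|\pa_{x\mu}H|\le L^H$ and $W_2(\rho^1_t,\rho^2_t)^2\le\E_{\mathcal{F}^0_t}|\delta X_t|^2$, obtaining the coercive refinement $\tfrac{\dd}{\dd t}\E\langle\delta X_t,\delta Y_t\rangle\le-\tfrac{c_0}{2}\E|\delta Y_t|^2+C\,\E|\delta X_t|^2$, paired with the forward bound $\tfrac{\dd}{\dd t}\E|\delta X_t|^2\le C\E|\delta X_t|^2+\varepsilon\E|\delta Y_t|^2$.

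These two differential inequalities, together with $\E\langle\delta X_T,\delta Y_T\rangle\ge0$ and the nonnegativity of $\E\langle\delta X_t,\delta Y_t\rangle$ for every $t$ (guaranteed by the assumed displacement monotonicity of $V(t,\cdot,\cdot)$, Proposition \ref{prop:monotone}), should close a Gronwall loop yielding the target $\E|\delta Y_{t_0}|^2\le C\,\E|\delta X_{t_0}|^2$. To pass from this averaged bound to the pointwise statement $|\pa_xV(t_0,x_0,\mu)-\pa_xV(t_0,x_0,\nu)|\le C\,W_2(\mu,\nu)$, I would first extract population stability $\E\,W_2^2(\rho^1_t,\rho^2_t)\le C\,W_2^2(\mu,\nu)$ from the same computation, and then run a single characteristic (without the McKean--Vlasov coupling) started from the deterministic point $x_0$ in each of the two frozen population flows $\rho^1,\rho^2$; the decoupling field of this single-agent system is $x\mapsto\pa_xV(t,x,\rho^i_t)$, which is $\|\pa_{xx}V\|_{L^\infty}$--Lipschitz and monotone since $\pa_{xx}V\ge0$ (Lemma \ref{lem:dismon convex} applied to the displacement monotone $V(t,\cdot,\cdot)$). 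Hence the comparison of the two single-agent trajectories, which agree at $t_0$, is forced only by the population difference and gives the pointwise bound; this is where $\|\pa_{xx}V\|_{L^\infty}$ enters the constant.

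The main obstacle is making the Gronwall/Riccati closure uniform in the horizon $T$ without reintroducing a dependence on the as yet uncontrolled $\|\pa_{x\mu}V\|_{L^\infty}$. A naive backward estimate on $\E|\delta Y_t|^2$ produces a factor $e^{CT}$, and it is precisely the monotone structure coming from the displacement monotonicity of $H$, $G$ and $V$, combined with the coercivity furnished by the strong convexity $\pa_{pp}H\ge c_0 I$ (the ingredient that replaces the missing idiosyncratic noise), that must be used to tame this exponential and keep the constant universal. Care is also needed throughout in conditioning on the common-noise filtration $\mathcal{F}^0_t$ when invoking the displacement monotonicity inequalities, since those are statements about random variables whose conditional laws are exactly the measures at which $H$, $G$ and $V$ are evaluated.
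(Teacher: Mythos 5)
Your proposal is correct and follows essentially the same route as the paper: your It\^o computation on $\E\langle \delta X_t,\delta Y_t\rangle$ is exactly the paper's identity \eqref{eq:diffdis} (there derived through the mollified MFG system rather than directly from \eqref{eq:FBSDEnew}), your coercive refinement from $\pa_{pp}H\ge c_0 I$ plus the Lipschitz bounds on $\pa_xH,\pa_pH$ is the paper's first chain of estimates, and your frozen-flow single-agent comparison from a deterministic point is precisely the paper's second half, built on \eqref{eq:BSDEx} and culminating in \eqref{eq:Du}. The bookkeeping differs in one place worth recording: you close the averaged bound and population stability already at the McKean--Vlasov stage, via Cauchy--Schwarz on $\E\langle\delta X_{t_0},\delta Y_{t_0}\rangle$, a backward BSDE estimate, and absorption, whereas the paper splits $\delta Y_{t_0}$ into a spatial increment (controlled by $\|\pa_{xx}V\|_{L^\infty}$) and the measure-direction increment $\bar w(t_0,\xi_2)$, carries the latter as an unknown through \eqref{eq:wLip}--\eqref{eq:Ep1p2}, and closes the bootstrap only after the frozen-flow stage via Young's inequality; both closures work, but in yours the order matters (plug the integrated coercivity bound on $\int\E|\delta Y_s|^2\dd s$ into the forward equation and then apply Gr\"onwall --- the reverse order forces a smallness-in-$T$ condition). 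Two further caveats: your ``main obstacle'' about uniformity in $T$ is not actually an obstacle, since the paper's universal constants are allowed to depend on $T$, so $e^{CT}$ factors are harmless --- what monotonicity genuinely buys is that the integrated bound on $\int\E|\delta Y_s|^2\dd s$ has no such integral on its right-hand side, which is what breaks the forward--backward circularity over an arbitrary fixed horizon; and your last step is more delicate than ``the comparison gives the pointwise bound,'' because the two single-agent trajectories separate over time and the backward representation involves $\pa_x V$ along different trajectories and flows at later times, so one must run Gr\"onwall on $\phi(s):=\E\|\pa_xV(s,\cdot,\rho^1_s)-\pa_x V(s,\cdot,\rho^2_s)\|_{L^\infty(\R^d)}$ (finite thanks to the standing bound on $\pa_{x\mu}V$), exactly as in the derivation of \eqref{eq:Du}.
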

\begin{proof}
We first recall that $u_i(t,x):=V(t,x,\rho_t^i)$, $w_i(t,x)=\pa_x V(t,x,\rho_t^i)$ for $i=1,2$, and $\bar w:=w_1-w_2$, where $(u_i,\rho^i,v_i)$ ($i=1,2$) is the solution to the MFG system \eqref{eq:SPDEi}. Setting $\bar\xi:=\xi_1-\xi_2$, integrating both sides of \eqref{eq:diffdis} from $t_0$ to $t$ and taking expectation, we obtain by Young's inequality
\begin{align*}
\mathbb E\Big[\langle\pa_xV(t,X_t^1,\rho_t^1)&-\pa_xV(t,X_t^2,\rho_t^2),\bar X_t\rangle\Big]=\mathbb E\Big[\langle w_1(t_0,\xi_1)-w_2(t_0,\xi_2),\bar \xi\rangle\Big]\\
&+\int_{t_0}^t\mathbb E\Big[\langle \bar X_s,\pa_xH(X_s^1,\rho_s^1,w_1(s,X_s^2))-\pa_xH(X_s^2,\rho_s^2,w_2(s,X_s^2))\rangle\\
&-\langle w_1(s,X_s^1)-w_2(s,X_s^2),\pa_pH(X_s^1,\rho_s^1,w_1(s,X_s^1))-\pa_pH(X_s^2,\rho_s^2,w_2(s,X_s^2))\rangle\Big]\dd s\\
&\leq\mathbb E\Big[\langle w_1(t_0,\xi_1)-w_2(t_0,\xi_2),\bar \xi\rangle\Big]\\
&+\int_{t_0}^t\left\{C\mathbb E\big[|\bar X_s|^2\big]+C\mathbb E\big[|\bar X_s||w_1(s,X_s^1)-w_2(s,X_s^2)|\big]-c_0\mathbb E\big[|w_1(s,X_s^1)-w_2(s,X_s^2)|^2\big]\right\}\dd s\\
&\leq\mathbb E\Big[\langle w_1(t_0,\xi_1)-w_2(t_0,\xi_2),\bar \xi\rangle\Big]+\int_{t_0}^t\left\{C\mathbb E\big[|\bar X_s|^2\big]-\frac{c_0}{2}\mathbb E\big[|w_1(s,X_s^1)-w_2(s,X_s^2)|^2\big]\right\}\dd s,
\end{align*}
where in the penultimate inequality we have used the uniform Lipschitz continuity of $\partial_x H$ and $\partial_p H$, as well as the strong convexity of $H$ in the $p$-variable.

Then by the displacement monotonicity of $V(t,\cdot,\cdot)$ and the Lipschitz continuity of $w_1(t_0,\cdot)$, by rearranging the previous inequality and increasing the value of $C$, we obtain
\begin{align}\label{eq:wLip}
\int_{t_0}^t\mathbb E\big[|w_1(s,X_s^1)-w_2(s,X_s^2)|^2\big]\dd s&\leq C\mathbb E[|\bar\xi|^2]+ C\mathbb E\Big[\langle \bar w(t_0,\xi_2),\bar\xi\rangle\Big]+C\int_{t_0}^t\mathbb E\big[|\bar X_s|^2\big]\dd s\nonumber\\
&\leq C\mathbb E[|\bar\xi|^2]+ C\mathbb E\Big[ |\bar w(t_0,\xi_2)|^2\Big]^{\frac{1}{2}}\mathbb E[|\bar\xi|^2]^{\frac{1}{2}}+C\int_{t_0}^t\mathbb E\big[|\bar X_s|^2\big]\dd s
\end{align}
Derived from \eqref{eq:Xi}, we obtain by \eqref{eq:wLip}
\begin{align*}
\mathbb E[|\bar X_t|^2]&\leq C\mathbb E[|\bar\xi|^2]+C\int_{t_0}^t\mathbb E\big[|\pa_pH(X_s^1,\rho_s^1,w_1(s,X_s^1))-\pa_pH(X_s^2,\rho_s^2,w_2(s,X_s^2))|^2\big]\dd s\\
&\leq C\mathbb E[|\bar\xi|^2]+C\int_{t_0}^t\mathbb E\big[|\bar X_s|^2\big]+\mathbb E\big[|w_1(s,X_s^1)-w_2(s,X_s^2)|^2\big]\dd s\\
&\leq C\mathbb E[|\bar\xi|^2]+C\int_{t_0}^t\mathbb E\big[|\bar X_s|^2\big]\dd s+C\mathbb E\Big[|\bar w(t_0,\xi_2)|^2\Big]^{\frac{1}{2}}\mathbb E[|\bar\xi|^2]^{\frac{1}{2}}
\end{align*}
By Gr\"onwall's inequality, we have
\begin{align}\label{eq:Ep1p2}
\mathbb E[W_2^2(\rho_t^1,\rho_t^2)]\leq\mathbb E[|\bar X_t|^2]\leq C\mathbb E[|\bar\xi|^2]+C\mathbb E\Big[|\bar w(t_0,\xi_2)|^2\Big]^{\frac{1}{2}}\mathbb E[|\bar\xi|^2]^{\frac{1}{2}},\ \ \forall t\in [t_0,T].
\end{align}

For $(\rho^i_t)_{t\in[0,T]}$, $i=1,2$ given, with $\rho^i_{t_0}=\mu_i$, by the assumptions on $V$, we have that the following equations have strong solutions
$$
\displaystyle X_t^{\xi_i,x}=x-\int_{t_0}^t\pa_pH(X_s^{\xi_i,x},\rho^i_s,\pa_xV(t,X_t^{\xi_i,x},\rho^i_t))\dd s+\beta B_t^{0,t_0},
$$
and
\begin{equation}\label{eq:BSDEx}
\displaystyle Y_t^{\xi_i,x}=\partial_xG(X_T^{\xi_i,x},\rho^i_T)-\int_t^T \partial_xH(X_s^{\xi_i,x},\rho^i_s,Y_s^{\xi_i,x})\dd s-\beta\int_t^TZ_s^{0,\xi_i,x}\cdot \dd B_s^0.
\end{equation}
In particular, the solution to \eqref{eq:BSDEx} can be obtained via the representation formulas
\begin{align*}
Y_t^{\xi_i,x}:=\pa_xV(t,X_t^{\xi_i,x},\rho^i_t)\quad\text{and}\quad Z_t^{0,\xi_i,x}=\pa_{xx}V(t,X_t^{\xi_i,x},\rho^i_t)+\tilde\E_{\mathcal{F}_t}[\pa_{x\mu}V(t,X_t^{\xi_i,x},\rho^i_t,\tilde X_t^{\xi_i,x})],\quad t\in [t_0,T].
\end{align*}
By the global Lipschitz continuity of $\partial_p H$ and $\partial_x V(t,\cdot,\rho)$, standard results for SDEs and Gr\"onwall's inequality yield that

\begin{equation}\label{eq:DeltaXt0x}
\sup_{s\in [t_0,T]}\Big(\mathbb E\Big[\big|X_s^{\xi_1,x}-X_s^{\xi_2,x}\big|^2\Big]\Big)^{\frac{1}{2}}\leq C\int_{t_0}^T\mathbb E\big[\|\pa_xV(s,\cdot,\rho^1_s)-\pa_x V(s,\cdot,\rho^2_s)\|_{L^\infty(\mathbb R^d)}+W_2(\rho_s^1,\rho_s^2)\big]\dd s.
\end{equation}
We note that the first term on the right hand side of the previous inequality is finite because of the assumptions on $V$. Letting $t=t_0$ and taking expectation on \eqref{eq:BSDEx}, we have
\begin{align*}
\pa_xV(t_0,x,\rho^i_{t_0})=\mathbb E\left[\pa_x G(X_T^{\xi_i,x},\rho_T^i)\right]-\int_{t_0}^T\mathbb E\left[D_xH(X_s^{\xi_i,x},\rho_s^i,Y_s^{\xi_i,x})\right]\dd s,\quad \text{for $i=1,2$,}
\end{align*}
and thus by the Lipschitz continuity assumptions on $\pa_x G$ and $\pa_x H$ and by \eqref{eq:DeltaXt0x} we have
{\small
\begin{align*}
|\pa_xV(t_0,x,\mu^1)-\pa_x V(t_0,x,\mu^2)|&\le C\left[\sup_{s\in[t_0,T]}\mathbb EW_2(\rho^1_s,\rho^2_s)+\int_{t_0}^T\mathbb E\left[\left|\pa_xV(s,X_s^{\xi_1,x},\rho^1_s)-\pa_x V(s,X_s^{\xi_2,x},\rho^2_s)\right|\right]\dd s\right]\\
&+C\left[\mathbb E |X_T^{\xi_1,x}-X_T^{\xi_2,x}|+\int_{t_0}^T\mathbb E\left|X_s^{\xi_1,x}-X_s^{\xi_2,x}\right|\dd s\right]\\
&\le C\left[\sup_{s\in[t_0,T]}\mathbb EW_2(\rho^1_s,\rho^2_s)+\int_{t_0}^T\mathbb E\left[\left|\pa_xV(s,X_s^{\xi_1,x},\rho^1_s)-\pa_x V(s,X_s^{\xi_2,x},\rho^1_s)\right|\right]\dd s\right]\\
&+C\int_{t_0}^T\mathbb E\left[\left|\pa_xV(s,X_s^{\xi_2,x},\rho^1_s)-\pa_x V(s,X_s^{\xi_2,x},\rho^2_s)\right|\right]\dd s\\
&+C\left[\mathbb E |X_T^{\xi_1,x}-X_T^{\xi_2,x}|+\int_{t_0}^T\mathbb E\left|X_s^{\xi_1,x}-X_s^{\xi_2,x}\right|\dd s\right]\\
&\le C\left[\sup_{s\in[t_0,T]} \mathbb E W_2(\rho^1_s,\rho^2_s)+\int_{t_0}^T\mathbb E\left\|\pa_xV(s,\cdot,\rho^1_s)-\pa_x V(s,\cdot,\rho^2_s)\right\|_{L^\infty(\mathbb R^d)}\dd s\right]\\
&+C\left[\left(\mathbb E\big[ |X_T^{\xi_1,x}-X_T^{\xi_2,x}|^2\big]\right)^\frac12+\int_{t_0}^T\left(\mathbb E\big[|X_s^{\xi_1,x}-X_s^{\xi_2,x}|^2\big]\right)^\frac12\dd s\right]\\
&\le C\left[\sup_{s\in[t_0,T]}\mathbb EW_2(\rho^1_s,\rho^2_s)+\int_{t_0}^T\mathbb E\left\|\pa_xV(s,\cdot,\rho^1_s)-\pa_x V(s,\cdot,\rho^2_s)\right\|_{L^\infty(\mathbb R^d)}\dd s\right].
\end{align*}}

By Gronwall's inequality, we derive
\begin{align}\label{eq:Du}
\sup_{s\in[t_0,T]}\mathbb E\left\|\pa_xV(s,\cdot,\rho^1_s)-\pa_x V(s,\cdot,\rho^2_s)\right\|_{L^\infty(\mathbb R^d)}\le C\sup_{t\in[t_0,T]}\mathbb E W_2(\rho^1_t,\rho^2_t).
\end{align}
Plugging \eqref{eq:Du} into \eqref{eq:Ep1p2} and applying Young's inequality, we obtain
\begin{align*}
\sup_{t\in[t_0,T]} \mathbb EW_2(\rho_t^1,\rho_t^2)\leq C\left\{\mathbb E\left[\left|\xi_1-\xi_2\right|^2\right]\right\}^{\frac{1}{2}}.
\end{align*}
We can choose $\xi_i\in\mathbb L^2(\mathcal{F}_{t_0},\mu_i)$ to be such that $W_2(\mu_1,\mu_2)=\left\{\mathbb E\left[\left|\xi_1-\xi_2\right|^2\right]\right\}^{\frac{1}{2}}$ and thus
\begin{align}\label{eq:rhonew}
\sup_{t\in[t_0,T]}\mathbb E W_2(\rho_t^1,\rho_t^2)\leq C W_2(\mu_1,\mu_2).
\end{align}
Combining \eqref{eq:Du} and \eqref{eq:rhonew}
\begin{align*}
\sup_{s\in[t_0,T]}\mathbb E\left\|\pa_xV(s,\cdot,\rho^1_s)-\pa_x V(s,\cdot,\rho^2_s)\right\|_{L^\infty(\mathbb R^d)}\le CW_2(\mu_1,\mu_2).
\end{align*}
Thus, the thesis of this proposition follows.
\end{proof}

\begin{cor}\label{cor: space and W2}
Suppose that $V$ is a classical solution to the master equation. Then $\pa_x V$ is uniformly Lipschitz in space and measure variables with respect to the $W_2$ metric in the case of the measure component. Furthermore, the Lipschitz constant is universal. 
\end{cor}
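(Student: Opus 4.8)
The plan is to read the corollary as the synthesis of the three preceding results—Propositions \ref{cor:Vsemi-concave}, \ref{prop:monotone} and \ref{prop:Lip}—assembled into a single \emph{universal} bound, working under the standing a priori hypotheses of this section (in particular that $\pa_{xx}V$ and $\pa_{x\mu}V$ are bounded on $[0,T]$). The logical chain I would follow is: (i) displacement monotonicity of $V(t,\cdot,\cdot)$ forces convexity of $V$ in $x$, giving the one-sided Hessian bound $\pa_{xx}V\ge 0$; (ii) the optimal-control semi-concavity estimate supplies the matching upper bound $\pa_{xx}V\le (1+T)C\,\Id$; together these pin $\|\pa_{xx}V\|_{L^\infty}$ to a universal constant, which is exactly the assertion that $\pa_xV$ is uniformly Lipschitz in $x$; and (iii) feeding this universal bound into Proposition \ref{prop:Lip} upgrades its conclusion to a universal $W_2$-Lipschitz constant in the measure variable.

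For the space variable, first I would invoke Proposition \ref{prop:monotone} to conclude that $V(t,\cdot,\cdot)$ is displacement monotone for each $t$, and then apply Lemma \ref{lem:dismon convex} with $V(t,\cdot,\cdot)$ in the role of $G$—legitimate since $\pa_xV(t,\cdot,\cdot)\in\cC^1(\R^d\times\sP_2(\R^d))$ because $V$ is a classical solution—to obtain $\pa_{xx}V(t,x,\mu)\ge 0$ for all $(t,x,\mu)$. On the other hand, Proposition \ref{cor:Vsemi-concave} furnishes the semi-concavity bound $\pa_{xx}V(t,x,\mu)\le (1+T)C\,\Id$ with $(1+T)C$ universal. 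Combining these gives the two-sided estimate $0\le \pa_{xx}V(t,x,\mu)\le (1+T)C\,\Id$, so that $\|\pa_{xx}V\|_{L^\infty}$ is controlled by a universal constant; equivalently, $\pa_xV(t,\cdot,\mu)$ is uniformly Lipschitz in $x$ with universal constant.

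For the measure variable, I would then apply Proposition \ref{prop:Lip}. Its hypotheses are now all in force: displacement monotonicity of $V(t,\cdot,\cdot)$ is provided by Proposition \ref{prop:monotone}, the bound on $\pa_{xx}V$ is the universal estimate just established, and the boundedness of $\pa_{x\mu}V$ is the standing a priori assumption carried throughout this analysis. The proposition delivers $W_2$-Lipschitz continuity of $\pa_xV$ in $\mu$ with a constant depending only on the data and on $\|\pa_{xx}V\|_{L^\infty}$. Since the latter has just been shown to be universal, this $W_2$-Lipschitz constant is itself universal, which completes the proof.

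The step carrying the real content is the interplay in (i)--(ii): the upper bound from convex optimal control (semi-concavity) and the lower bound from displacement monotonicity (convexity) are what make $\|\pa_{xx}V\|_{L^\infty}$ \emph{universal} rather than merely finite, and every downstream constant inherits this universality through Proposition \ref{prop:Lip}. The remaining assembly is essentially bookkeeping; the only mild subtlety is to make explicit that the boundedness hypotheses required by Propositions \ref{cor:Vsemi-concave} and \ref{prop:Lip} are those in force by the standing a priori assumptions, with the quantitative bound on $\pa_{xx}V$ being \emph{derived} here rather than postulated.
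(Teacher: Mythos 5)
Your proposal is correct and follows exactly the paper's own argument: combine Proposition \ref{prop:monotone} with Lemma \ref{lem:dismon convex} to get $\pa_{xx}V\ge 0$, pair this with the semi-concavity upper bound of Proposition \ref{cor:Vsemi-concave} to make $\|\pa_{xx}V\|_{L^\infty}$ universal, and then invoke Proposition \ref{prop:Lip} for the $W_2$-Lipschitz bound in the measure variable. Your added remark that the universality of the measure-variable constant is inherited through the dependence of Proposition \ref{prop:Lip}'s constant on $\|\pa_{xx}V\|_{L^\infty}$ is a correct and useful clarification of a point the paper leaves implicit.
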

\begin{proof}
From Lemma \ref{lem:dismon convex} and Proposition \ref{prop:monotone} we have that $\pa_{xx} V \geq 0$ and from Proposition \ref{cor:Vsemi-concave} we get that $\pa_{xx} V \leq CI_d$. Hence $\abs{\pa_{xx}V}$ is bounded by a universal constant and so $\pa_x V$ is Lipschitz continuous in space with universal Lipschitz constant. 

The Lipschitz continuity in the measure variable comes from Proposition \ref{prop:Lip}. 
\end{proof}}

\section{A Priori $W_1$-Lipschitz estimates on $\partial_x V(x,\cdot)$}\label{sec:4}

Several FBSDE systems will play a crucial role in our analysis.

\subsection{FBSDE of Pontryagin type} Let $t_0\in[0,T)$ and ${\xi\in\mathbb{L}^2(\mathcal{F}_{t_0})}$ and on the time interval $[t_0,T]$ we consider 
\begin{equation}\label{FBSDE1} 
\left\{
\begin{array}{lcl}
{X_t^{\xi}} &=&\displaystyle \xi  -\int_{{t_0}}^t \partial_p H(X_s^{\xi}, \rho_s, Y_s^{\xi}) \dd s + {\beta}B_t^{{0,t_0}}\\
{Y_t^{\xi}} &=&\displaystyle \partial_x G(X_T^{\xi}, \rho_T) {-}\int_t^T\partial_x H(X_s^{\xi}, \rho_s, Y_s^{\xi}) \dd s - \int_t^T Z_s^{0,\xi} \dd B_s^{{0}}
\end{array}
\right.
\end{equation}
where {$\rho_{t_0}:=\mathcal{L}_\xi$} and $\rho_t:= \mathcal{L}_{X_t^{\xi}| \mathcal F_t^0}$.

\begin{lem}\label{lem:representation}
Suppose that $G,H$ satisfy Assumptions \ref{assum: G} and \ref{assum: H} and that $V$ is a classical solution to the master equation with bounded $\pa_{xx}V$ and $\pa_{x\mu}V$ on $[0,T]$. Then 
we have the representation formulas $Y_t^\xi = \pa_x V(t, X_t^\xi, \rho_t) $ and {$Z_t^{0,\xi}=\beta\left(\pa_{xx}V(t,X_t^\xi,\rho_t)+\tilde\E_{\mathcal{F}_t}[\pa_{x\mu}V(t,X_t^\xi,\rho_t,\tilde X_t^\xi)]\right)$}.
\end{lem}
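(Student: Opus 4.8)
The plan is to \emph{construct} a solution of \eqref{FBSDE1} directly from the classical solution $V$, so that the two representation formulas can simply be read off the construction rather than recovered from an abstract existence/uniqueness argument for \eqref{FBSDE1}.

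First I would fix the forward dynamics. By Corollary \ref{cor: space and W2}, $\pa_xV(t,\cdot,\cdot)$ is Lipschitz in $x$ and in $W_2$, so under Assumption \ref{assum: H} the coefficient $(t,x,\mu)\mapsto\pa_pH(x,\mu,\pa_xV(t,x,\mu))$ is Lipschitz in the space and measure variables. Hence the McKean--Vlasov SDE \eqref{eq:defX} has a unique strong solution $X^\xi$ with conditional flow $\rho_t:=\cL_{X_t^\xi|\mathcal F_t^0}$, which is exactly the forward leg of \eqref{FBSDE1}. I then set $Y_t:=\pa_xV(t,X_t^\xi,\rho_t)$ and verify that it obeys the backward leg.

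The core of the proof is the computation of $\dd Y_t$, for which I would reuse verbatim the It\^o--Wentzell computation from the proof of Proposition \ref{prop:monotone}. Writing $u(t,x):=V(t,x,\rho_t)$, $w(t,x):=\pa_xV(t,x,\rho_t)$ and $v(t,x):=\beta\,\tilde\E_{\mathcal F_t}[\pa_\mu V(t,x,\rho_t,\tilde X_t^\xi)]$, the triple $(\rho,u,v)$ solves the MFG system \eqref{eq:SPDEi}, and the mollification argument \eqref{eq:uin}--\eqref{eq:wi} yields
\[
\dd w(t,X_t^\xi)=\pa_xH(X_t^\xi,\rho_t,w(t,X_t^\xi))\dd t+\big[\pa_xv(t,X_t^\xi)+\beta\,\pa_xw(t,X_t^\xi)\big]\cdot\dd B_t^0.
\]
Since $Y_t=w(t,X_t^\xi)$, integrating from $t$ to $T$ and invoking the terminal condition in \eqref{eq:master}, which gives $Y_T=\pa_xV(T,X_T^\xi,\rho_T)=\pa_xG(X_T^\xi,\rho_T)$, reproduces precisely the backward equation of \eqref{FBSDE1} with $Z_t^{0,\xi}:=\pa_xv(t,X_t^\xi)+\beta\,\pa_xw(t,X_t^\xi)$. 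Thus $(X^\xi,Y,Z^{0,\xi})$ solves \eqref{FBSDE1}, and the first representation $Y_t^\xi=\pa_xV(t,X_t^\xi,\rho_t)$ holds by construction.

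It then remains to identify the integrand $Z^{0,\xi}$ with the claimed expression: differentiating $w$ and $v$ in $x$ gives $\pa_xw=\pa_{xx}V$ and $\pa_xv=\beta\,\tilde\E_{\mathcal F_t}[\pa_{x\mu}V]$, whence
\[
Z_t^{0,\xi}=\beta\Big(\pa_{xx}V(t,X_t^\xi,\rho_t)+\tilde\E_{\mathcal F_t}[\pa_{x\mu}V(t,X_t^\xi,\rho_t,\tilde X_t^\xi)]\Big),
\]
as asserted. I expect the main obstacle to be the rigorous justification of the dynamics of $w(t,X_t^\xi)$: since the assumptions only control $\pa_{xx}V$ and $\pa_{x\mu}V$ while allowing $\pa_xV$ and $\pa_\mu V$ to grow linearly in $x$, one cannot differentiate the backward SPDE for $u$ in $x$ directly, but must apply the It\^o--Wentzell formula to the mollified fields $u_{i,n}$ of \eqref{eq:uin} and pass to the limit as in \eqref{eq:wi}; along the way one also checks that $Z^{0,\xi}$ is square-integrable along the flow, which follows from the boundedness of $\pa_{xx}V,\pa_{x\mu}V$ together with the standard $L^2$ moment bounds on $X^\xi$.
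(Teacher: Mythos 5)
Your proposal is correct and is essentially the argument the paper intends: its one-line proof defers to standard verification arguments in the cited references, and the paper itself already carries out exactly your construction in Section \ref{sec:3}, where the triple defined in \eqref{eq:defX}--\eqref{eq:defYZ} is asserted to solve \eqref{eq:FBSDEnew} (the same system as \eqref{FBSDE1} up to the placement of $\beta$), justified by the It\^o--Wentzell/mollification computation \eqref{eq:uin}--\eqref{eq:wi} that you reuse. The only caveat is that your construction produces \emph{a} solution of \eqref{FBSDE1} satisfying the stated formulas, and identifying it with \emph{the} solution of \eqref{FBSDE1} requires the uniqueness for that system which the paper records separately (Lemma \ref{lem:delta}, via short-time well-posedness and iteration), so this does not constitute a substantive gap.
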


\begin{proof}
This result is well-known for experts and its proof follows the same lines as the proofs of \cite[Proposition 5.42]{Carmona2018}, \cite[Remark 57]{ChaCriDel22}, \cite[Theorem 6.3]{FourAuthor} and \cite[Theorem 4.1]{MesMou}.
\end{proof}

\medskip

We also consider the standard system
\begin{equation}\label{FBSDE2}
\left\{
\begin{array}{lcl}
{X_t^{x}} &=& x  + {\beta}B_t^{{0,t_0}} \\
{Y_t^{x,\xi}} &=& \displaystyle\partial_x G(X_T^{x}, \rho_T) {-}\int_t^T\partial_x H(X_s^{x}, \rho_s, Y_s^{x,\xi}) \dd s - \int_t^T Z_s^{0,x,\xi} \dd B_s^{{0}}
\end{array}
\right.
\end{equation}
and the alternative system
\begin{equation}\label{FBSDE3}
\left\{ 
\begin{array}{lcl}
{X_t^{\xi,x}} &=&\displaystyle {x}  -\int_{{t_0}}^t \partial_p H(X_s^{\xi,x}, \rho_s, Y_s^{\xi,x}) \dd s + \beta B_t^{{0,t_0}}\\
{Y_t^{\xi,x}} &=&\displaystyle \partial_x G(X_T^{\xi,x}, \rho_T) {-} \int_t^T\partial_x H(X_s^{\xi,x}, \rho_s, Y_s^{\xi,x}) \dd s - \int_t^T Z_s^{0,\xi,x} \dd B_s^{{0}}, 
\end{array}
\right.
\end{equation}
which also have the corresponding representation formulas. {Note the the difference between the variables $({Y_t^{x,\xi}}, Z_s^{0,x,\xi})$ and $({Y_t^{\xi,x}},Z_s^{0,\xi,x})$ which is expressed in the superscript labels. We underline that the solutions of \eqref{FBSDE2} and \eqref{FBSDE3} depend implicitly on $\xi$ via the flow of measures $(\rho_t)_{t\in[t_0,T]}$.}

\indent We emphasize that these differ from the systems in \cite{FourAuthor} in that for us the variable $Y$ plays the role of the momentum along the characteristics whereas in \cite{FourAuthor} it is the value function along the characteristics. {All the previous FBSDE systems presented above are strongly linked to the Pontryagin system associated to the stochastic maximum principle (or to the classical Hamiltonian system when $\beta=0$), and the systems used in \cite{FourAuthor} have no such connection.

\indent Both previously presented systems have similar representation formulas as in Lemma \ref{lem:representation}. Indeed, 
$$
Y_t^{x,\xi} = \pa_x V(t, X_t^{x}, \rho_t),\ \ Y_t^{\xi,x} = \pa_x V(t, X_t^{\xi,x}, \rho_t).
$$
These are clearly different quantities, as in particular, if $\beta=0$, we simply have $Y_t^{x,\xi} = \pa_x V(t,x, \rho_t)\neq \pa_x V(t,X^{\xi,x}_t,\rho_t)$, except when $t=t_0$, when $Y^{x,\xi}_{t_0} = Y^{\xi,x}_{t_0} = \pa_x V(t_0,x,\rho_{t_0})$.
}

\subsection{Intuition}
To help to give some intuition for the role of the different systems considered above, let us consider $\beta = 0$ for this subsection. In the deterministic case we can use $X^{\xi, x}_t$ to define $\rho_t$. In particular we get that $\rho_t = \mathcal L(X^{\xi, \xi(\cdot)}_t) = \mathcal L(X^{\xi, \cdot}_t \circ \xi)$ (note that $X^{\xi, x}_{t_0} = x$). 

Our objective is to develop some equations that give a representation for $\pa_{\mu x}V$. Since $Y_t^{x,\xi} = \pa_x V(t, x, \rho_t)$ (since $\beta = 0$) it would be natural to try to differentiate the defining equation of $Y_t^{x,\xi}$ with respect to $\xi$ (this will become \eqref{tdYmu} below). Let us formally attempt this. Our equation is
\[
{Y_t^{x,\xi}} = \displaystyle\partial_x G(X_T^{x}, \rho_T) {-}\int_t^T\partial_x H(X_s^{x}, \rho_s, Y_s^{x,\xi}) \dd s
\]
and so we see that this comes down to differentiating the flow of measures $\mathcal L(X^{\xi, \cdot}_t \circ \xi)$ with respect to $\xi$. 
Replacing $\xi$ with $\xi + \eps e_1$ we get
\[
X_s^{\xi+\eps e_1, \cdot} ((\xi(\omega)+\eps e_1))
&\approx X_s^{\xi+\eps e_1, \cdot}(\xi(\omega)) + \eps \nabla X_s^{\xi, \cdot}(\xi(\omega)) \cdot e_1 \\
&\approx X_s^{\xi, \cdot}(\xi(\omega)) + \eps \nabla X_s^{\xi, \cdot}(\xi(\omega)) \cdot e_1 + \eps \delta X_s^{\xi, \cdot}(\xi(\omega))
\]
where $\delta X_s^{\xi, \cdot}$ represents the value that one gets when from $X^{\xi, \cdot}_s$ when perturbing $\xi$. So we see that an equation that gives the variation of ${Y_t^{x,\xi}}$ with respect to $\xi$ will involve two types of variations of $X$. The first is a gradient in space and the second is a variation with respect to $\xi$. Each of these will require their own system of FBSDEs which gives us three systems in total.

This also helps us understand why we need to consider the three systems above \eqref{FBSDE1},\eqref{FBSDE2},\eqref{FBSDE3}. Having \eqref{FBSDE2} is a matter of convenience as it provides the simplest representation formula. \eqref{FBSDE3} is necessary because we need to understand the gradient in space of $X$. In the case of no noise these two alone would have been sufficient. However in the presence of noise we must also consider \eqref{FBSDE1} because we cannot extract the $\rho_s$ directly from \eqref{FBSDE3}.

\subsection{FBSDEs for pointwise representation}
{
In order to gain the necessary a priori regularity estimates on $\pa_xV$ (notably the fact that it is $W_1$--Lipschitz continuous in the measure variable), we work at the level of linearized FBSDE systems. These are derived from \eqref{FBSDE1}, \eqref{FBSDE2} and \eqref{FBSDE3}. Linearization techniques combined with finite dimensional projections (in the measure variable) are underneath essentially all well-posedness results on master equations. This is typically carried out either at the PDE level using the MFG system (as for instance in \cite{CarDelLasLio, AmbMes}, etc.) or at the level of the Hamiltonian/FBSDE system (as for instance in \cite{Carmona2018,GanSwi,MouZha23,TwoAuthor2022,FourAuthor,MouZha22}, etc.)

\indent Consider $\{e_1,\dots,e_d\}\subset\R^d$ the canonical basis and for $k\in\{1,\dots,d\}$. First, we differentiate \eqref{FBSDE3} in the $e_k$ direction to obtain}
\begin{equation}
\label{tdYx}
\begin{cases}
	\displaystyle \nabla_{k} X_t^{\xi,x}&=\displaystyle e_k {-}\int_{t_0}^t \left\{(\nabla_k X_s^{\xi,x})^\top\partial_{xp}H(X_s^{\xi,x},\rho_s,Y_s^{\xi,x})
	+ (\nabla_k Y_s^{\xi,x})^\top\partial_{pp}H(X_s^{\xi,x},\rho_s,Y_s^{\xi,x})\right\}\dd s \\[5pt]
	\displaystyle \nabla_{k} Y_t^{\xi,x}&= \displaystyle \partial_{xx} G(X^{\xi,x}_T,\rho_T) \cdot \nabla_{k} X^{\xi,x}_T\\[5pt]
	\displaystyle\ &{-} \displaystyle \int_t^T \left\{\partial_{xx}H(X_s^{\xi,x},\rho_s,Y_s^{\xi,x})\cdot\nabla_k X_s^{\xi,x} {+} \partial_{px}H(X_s^{\xi,x},\rho_s,Y_s^{\xi,x})\cdot\nabla_k Y_s^{\xi,x}\right\}\dd s\\
	&\displaystyle {-}  \int_t^T \nabla_{k} Z_s^{0,\xi,x}\cdot \dd B_s^{0}.
\end{cases}
\end{equation}

\begin{eqnarray}
\label{tdYx-}
\left\{\begin{array}{ll}
\displaystyle \nabla_{k} \mathcal X_t^{\xi,x}&=\displaystyle{-} \int_{t_0}^t\Big\{(\nabla_k \mathcal X_s^{\xi,x})^{\top}\partial_{xp}H(X_s^{\xi},\rho_s,Y_s^\xi)+(\nabla_k \mathcal Y_s^{\xi,x})^{\top}\partial_{pp}H(X_s^{\xi},\rho_s,Y_s^\xi)\\[8pt]
\displaystyle & -\tilde{\mathbb E}_{\mathcal{F}_s}\left[(\nabla_{k} \tilde X_s^{\xi,x})^{\top}(\partial_{\mu p} H)(X^{\xi}_s,\rho_s,\tilde X_s^{\xi,x}, Y^{\xi}_s)
\displaystyle +(\nabla_{k} \tilde {\mathcal X}_s^{\xi,x})^{\top}\partial_{\mu p} H(X^{\xi}_s,\rho_s,\tilde X_s^{\xi},Y^{\xi}_s)\right] \Big\}\dd s \\[5pt]
\displaystyle \nabla_{k} \mathcal Y_t^{\xi,x}&= \displaystyle \partial_{xx} G(X^{\xi}_T,\rho_T) \cdot \nabla_{k} \mathcal X^{\xi,x}_T\\[5pt]
&+\displaystyle\tilde{\mathbb E}_{\mathcal{F}_T}\big[\partial_{\mu x}G(X_T^\xi,\rho_T,\tilde X_T^{\xi,x})\cdot \nabla_{k} \tilde X_T^{\xi,x}+\partial_{\mu x}G(X_T^\xi,\rho_T, \tilde X_T^\xi)\cdot\nabla_{k}\tilde {\mathcal X}_T^{\xi,x}\big]\\[5pt]
&\displaystyle {-} \int_t^T\Big\{ \partial_{xx} H\big(X_s^{\xi},\rho_s,Y^{\xi}_s)\cdot \nabla_{k} \mathcal X^{\xi,x}_s+  \partial_{px} H\big(X_s^{\xi},\rho_s,Y^{\xi}_s)\cdot \nabla_{k} \mathcal Y^{\xi,x}_s\\[5pt]
&\displaystyle +\tilde{\mathbb E}_{\mathcal{F}_s}\big[\partial_{\mu x}H\big(X^{\xi}_s,\rho_s,\tilde X_s^{\xi,x}, Y^{\xi}_s )\cdot\nabla_{k} \tilde X_s^{\xi,x}+\partial_{\mu x}H\big(X^{\xi}_s, \rho_s,\tilde X_s^{\xi},Y^{\xi}_s)\cdot\nabla_{k} \tilde {\mathcal X}_s^{\xi,x}\big]\Big\}\dd s\\[5pt]
&\displaystyle {-} \int_t^T \nabla_{k} \mathcal Z_s^{0,\xi,x}\cdot \dd B_s^{0},
\end{array}\right.
\end{eqnarray}
and 
\begin{align}\label{tdYmu}
\nabla_{\mu_k} Y_t^{x,\xi,\tilde x}&=\tilde{\mathbb E}_{\mathcal{F}_T}\big[\partial_{\mu x} G (X_T^{x},\rho_T,\tilde X_T^{\xi,\tilde x})\cdot \nabla_{k} \tilde X_T^{\xi,\tilde x}+\partial_{\mu x} G (X_T^{x},\rho_T,\tilde X_T^\xi)\cdot \nabla_{k}\tilde {\mathcal X}_T^{\xi,\tilde x}\big]\nonumber\\
&\displaystyle {-} \int_t^T \Big\{\partial_{px}H(X_s^x,\rho_s,Y_s^{x,\xi})\cdot \nabla_{\mu_k} Y_s^{x,\xi,\tilde x}\\
&\displaystyle +\tilde{\mathbb E}_{\mathcal{F}_s}\big[\partial_{\mu x} H (X_s^{x},\rho_s,\tilde X_s^{\xi,\tilde x},Y_s^{x,\xi})\cdot\nabla_{k} \tilde X_s^{\xi,\tilde x}+\partial_{\mu x} H (X_s^{x},\rho_s,\tilde X_s^\xi,Y_s^{x,\xi})\cdot \nabla_{k}\tilde {\mathcal X}_s^{\xi,\tilde x}\big]\Big\}\dd s\nonumber\\
&\displaystyle{-} \int_t^T \nabla_{\mu_k} Z_s^{0,x,\xi,\tilde x}\cdot \dd B_s^{{0}}.\nonumber
\end{align}

\begin{rmk}
The motivation behind the notation $\nabla_{\mu_k}$ in \eqref{tdYmu} is that this is linked to the $k^{th}$ component for the Wasserstein gradient. Later we will also use the notation $\pa_{\mu_k}$ with a similar purpose, i.e. $\pa_{\mu_k}F:=\pa_\mu F\cdot e_k$, for any $F$ regular enough.
\end{rmk}

\begin{lem}\label{lem:delta}
Suppose that $G,H$ satisfy Assumptions \ref{assum: G} and \ref{assum: H}. There is a constant $\delta>0$ so that whenever $T - t_0 < \delta$ the systems \eqref{FBSDE1}, \eqref{tdYx}, and \eqref{tdYx-} have a unique solution, where $\delta$ depends only on $L^H$ and $L_2^G$ (these are the bounds on the second derivatives of $H$ and the bounds on Lipschitz constant of $\pa_x G$, with respect to space and $W_2$ in measure). 
Furthermore the solutions to these systems are bounded by controlled quantities, specifically there is a constant $C$ depending only on $T$, $L^H$, and $L_2^G$ so that if $A_t$ is one of $\nabla_{k} X_t^{\xi,x}, \nabla_{k} Y_t^{\xi,x}, \nabla_{k} \mathcal X_t^{\xi,x}$, or $\nabla_{k} \mathcal Y_t^{\xi,x}$ then
\[
\E\left[\sup_{s \in [t_0, T]} \abs{A_s}^2\right] \leq C
\]
\end{lem}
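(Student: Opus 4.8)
The plan is to establish short-time well-posedness through a Banach fixed point argument carried out in the space of $\mathbb F$-progressively measurable, square-integrable processes equipped with the norm $\E[\sup_{[t_0,T]}|\cdot|^2]$, solving the systems in their natural hierarchical order and exploiting throughout that every coefficient is globally Lipschitz (equivalently, has bounded first or second derivatives) with constants controlled by $L^H$, $L^G$ and $L^G_2$. First I would treat the genuinely coupled McKean--Vlasov system \eqref{FBSDE1}; its solution produces the conditional measure flow $\rho_t=\mathcal L_{X_t^\xi|\mathcal F_t^0}$ together with the processes $(X^\xi,Y^\xi)$ that enter the coefficients of the linearized systems. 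Once $\rho$ is available, \eqref{FBSDE3} is a standard (non-mean-field) FBSDE solved by the same scheme, and this in turn renders \eqref{tdYx} a \emph{linear} FBSDE and \eqref{tdYx-} a \emph{linear} McKean--Vlasov FBSDE, both of which inherit well-posedness from the same fixed point machinery applied to linear data.

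For \eqref{FBSDE1} I would define the solution map by decoupling: given an input forward path $X$ (hence its conditional law flow), solve the backward equation for $(Y,Z)$ via the martingale representation theorem and Lipschitz-driver BSDE theory, then insert $Y$ into the forward SDE to obtain a new path $X'$. Using the $L^H$-Lipschitz bounds on $\partial_pH$ and $\partial_xH$, the $W_2$-Lipschitz bound (constant $L^G_2$) on $\partial_xG$, It\^o's isometry together with the Burkholder--Davis--Gundy inequality for the martingale terms, and Gr\"onwall's lemma, one reaches an estimate of the form $\E[\sup|X'-\bar X'|^2]\le\kappa(T-t_0)\,\E[\sup|X-\bar X|^2]$ with $\kappa(T-t_0)<1$ whenever $T-t_0<\delta(L^H,L^G_2)$. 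The conditional-law dependence is absorbed by the bound $W_2(\rho_s,\rho_s')\le\big(\mathbb E_{\mathcal F_s^0}[|X_s-X_s'|^2]\big)^{1/2}$, so the mean-field coupling merely contributes additional Lipschitz terms and does not degrade the threshold.

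The linear systems are handled in the same way. The coefficients of \eqref{tdYx} are the second derivatives $\partial_{xp}H,\partial_{pp}H,\partial_{px}H,\partial_{xx}H$ evaluated along the already-fixed solution of \eqref{FBSDE3} (all bounded by $L^H$) together with the terminal coupling $\partial_{xx}G$ (bounded by $L^G$), so the map $\nabla_kX\mapsto\nabla_kX'$ is again a contraction for $T-t_0$ small, with threshold depending only on $L^H$ and the constants of $G$. The system \eqref{tdYx-} is a linear McKean--Vlasov FBSDE whose external forcing is $\nabla_k\tilde X^{\xi,x}$, the conditionally independent copy of the solution of \eqref{tdYx} produced above, and whose mean-field coupling enters only through the conditional-expectation terms weighted by $\partial_{\mu p}H,\partial_{\mu x}H$ and $\partial_{\mu x}G$; the identical contraction estimate applies. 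Taking $\delta$ to be the minimum of all these thresholds yields a single $\delta$ depending only on $L^H$ and the data constants of $G$, which the lemma records as dependence on $L^H$ and $L^G_2$.

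Finally, the a priori bounds follow from the standard energy estimate for these linear FBSDEs: applying It\^o's formula to $|A_s|^2$, using the boundedness of all coefficients, the Burkholder--Davis--Gundy inequality for the stochastic integral, and Gr\"onwall's lemma, one controls $\E[\sup_{[t_0,T]}|A_s|^2]$ by the data. For \eqref{tdYx} the only genuinely external forcing is the deterministic datum $e_k$, giving a bound $C=C(T,L^H,L^G)$; for \eqref{tdYx-} the forcing additionally involves $\nabla_k\tilde X^{\xi,x}$, whose bound was just established, so its constant also carries the dependence on $L^G_2$. The main obstacle is to close the contraction and energy estimates so that both $\delta$ and $C$ depend only on the prescribed data constants and are independent of $t_0$, of the (a priori unknown) magnitude of the solutions, and of any higher derivatives of a candidate $V$; this requires separating the forward and backward contributions carefully in the coupled system \eqref{FBSDE1} and then propagating the \eqref{tdYx}-bound into the forcing of \eqref{tdYx-} along the chain of estimates.
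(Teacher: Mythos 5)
Your overall architecture --- solve \eqref{FBSDE1} first by a small-time contraction, then treat \eqref{tdYx} and \eqref{tdYx-} as linear FBSDEs handled by the same fixed-point machinery, and finally obtain the bounds by standard energy estimates --- is the same as the paper's. However, there is a genuine gap at the one point that gives the lemma its content: the claim that $\delta$ depends only on $L^H$ and $L^G_2$, i.e.\ on the $W_2$-Lipschitz constant of $\partial_x G$, and \emph{not} on $\|\partial_{\mu x}G\|_{L^\infty}$ (equivalently, not on the $W_1$-Lipschitz constant). In your treatment of \eqref{tdYx-} you say the mean-field coupling ``enters only through the conditional-expectation terms weighted by $\partial_{\mu p}H,\partial_{\mu x}H$ and $\partial_{\mu x}G$'' and that ``the identical contraction estimate applies.'' If, in the difference system of the contraction, you bound the terminal term $\tilde{\mathbb E}_{\mathcal F_T}\big[\partial_{\mu x}G(X^\xi_T,\rho_T,\tilde X^\xi_T)\cdot\tilde{\Delta X}_T\big]$ as one bounds a generic bounded coefficient, namely by $\|\partial_{\mu x}G\|_{L^\infty}\,\tilde{\mathbb E}_{\mathcal F_T}\big|\tilde{\Delta X}_T\big|$, then the contraction threshold depends on $L^G$, not on $L^G_2$. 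Your concluding sentence, that the thresholds depend ``on $L^H$ and the data constants of $G$, which the lemma records as dependence on $L^H$ and $L^G_2$,'' is precisely the assertion that requires proof, and nothing in your estimates delivers it.

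The missing idea --- which is the paper's only real modification of the textbook argument --- is the following. In the difference system the external-forcing terms involving $\nabla_k\tilde X^{\xi,x}$ cancel, so the only problematic remaining term is $\tilde{\mathbb E}_{\mathcal F_T}\big[\partial_{\mu x}G(X^\xi_T,\rho_T,\tilde X^\xi_T)\cdot\tilde{\Delta X}_T\big]$; by the characterization \eqref{WassersteinDerivativeDef} of the Wasserstein derivative, this quantity is the directional derivative of $\partial_xG(X^\xi_T,\cdot)$ at $\rho_T$ in the direction $\tilde{\Delta X}_T$, and is therefore bounded by $L^G_2\big(\mathbb E\big[|\Delta X_T|^2\big]\big)^{1/2}$, with no appearance of $\|\partial_{\mu x}G\|_{L^\infty}$. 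This refinement is not cosmetic: the lemma is later invoked with $G$ replaced by $V(t_1,\cdot,\cdot)$ at intermediate times, for which the $W_2$-Lipschitz constant of $\partial_x V$ is already universal (Corollary \ref{cor: space and W2}), whereas $\|\partial_{x\mu}V\|_{L^\infty}$ is exactly the quantity that Proposition \ref{prop:repr} and Corollary \ref{cor: muk Y bound} are designed to bound. If $\delta$ were allowed to depend on the latter, the bootstrap in Theorem \ref{thm:main} would become circular and the step size could degenerate. As written, your argument proves a weaker statement than the lemma, which is insufficient for the rest of the paper.
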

It is crucial in the above lemma that the constant $\delta$ depends only on the $W_2$-Lipschitz constant of $\pa_x G$ and not on the $W_1$-Lipschitz constant.

\begin{proof}
The proof is similar to  \cite[Section 9]{MouZha23} and \cite[Proposition 6.2(i)]{FourAuthor}.

\indent The described short time existence and uniqueness for \eqref{FBSDE1} follows from \cite[Theorem 5.4]{Carmona2018}. 

For \eqref{tdYx} we use \cite[Theorem 8.2.1]{Zhang2017}. In particular we note that because \eqref{tdYx} is linear (in $\nabla_{k} X^{\xi,x}_T$ and $\nabla_{k} Y^{\xi,x}_T$) the short time interval only depends on the absolute value of the coefficients which only include $\pa_{xx} G$ and second derivatives of $H$. Finally from the cited theorem we see that $\E\left[\sup_{s \in [t_0, T]} \abs{A_s}^2\right]$ is bounded for $A_s = \nabla_{k} X_t^{\xi,x}$ or $\nabla_{k} Y_t^{\xi,x}$.

Next we consider \eqref{tdYx-}. Again this is a linear FBSDE. The proof follows very similiarly to \cite[Theorem 8.2.1]{Zhang2017} however a modification is required.

Consider the standard mapping $F$ given by $y_s$ maps to $\nabla_{k} \mathcal Y_s^{\xi,x}$ where $\nabla_{k} \mathcal Y_s^{\xi,x}$ is the solution to 
\begin{eqnarray*}
\left\{\begin{array}{ll}
\displaystyle \nabla_{k} \mathcal X_t^{\xi,x}&=-\displaystyle \int_{t_0}^t\Big\{(\nabla_{k} \mathcal X_s^{\xi,x})^{\top}\partial_{xp}H(X_s^{\xi},\rho_s,Y_s^\xi)+(y_s)^{\top}\partial_{pp}H(X_s^{\xi},\rho_s,Y_s^\xi)\\[8pt]
\displaystyle & +\tilde{\mathbb E}_{\mathcal{F}_s}\left[(\nabla_{k} \tilde X_s^{\xi,x})^{\top}(\partial_{\mu p} H)(X^{\xi}_s,\rho_s,\tilde X_s^{\xi,x}, Y^{\xi}_s)
\displaystyle +(\nabla_{k} \tilde {\mathcal X}_s^{\xi,x})^{\top}\partial_{\mu p} H(X^{\xi}_s,\rho_s,\tilde X_s^{\xi},Y^{\xi}_s)\right] \Big\}\dd s \\[5pt]
\displaystyle \nabla_{k} \mathcal Y_t^{\xi,x}&= \displaystyle \partial_{xx} G(X^{\xi}_T,\rho_T) \cdot \nabla_{k} \mathcal X^{\xi,x}_T\\[5pt]
&+\displaystyle\tilde{\mathbb E}_{\mathcal{F}_T}\big[\partial_{\mu x}G(X_T^\xi,\rho_T,\tilde X_T^{\xi,x})\cdot \nabla_{k} \tilde X_T^{\xi,x}+\partial_{\mu x}G(X_T^\xi,\rho_T, \tilde X_T^\xi)\cdot\nabla_{k}\tilde {\mathcal X}_T^{\xi,x}\big]\\[5pt]
&\displaystyle {-} \int_t^T\Big\{ \partial_{xx} H\big(X_s^{\xi},\rho_s,Y^{\xi}_s)\cdot \nabla_{k} \mathcal X^{\xi,x}_s+  \partial_{px} H\big(X_s^{\xi},\rho_s,Y^{\xi}_s)\cdot y_s\\[5pt]
&\displaystyle +\tilde{\mathbb E}_{\mathcal{F}_s}\big[\partial_{\mu x}H\big(X^{\xi}_s,\rho_s,\tilde X_s^{\xi,x}, Y^{\xi}_s )\cdot\nabla_{k} \tilde X_s^{\xi,x}+\partial_{\mu x}H\big(X^{\xi}_s, \rho_s,\tilde X_s^{\xi},Y^{\xi}_s)\cdot\nabla_{k} \tilde {\mathcal X}_s^{\xi,x}\big]\Big\}\dd s\\[5pt]
&\displaystyle {-} \int_t^T \nabla_{k} \mathcal Z_s^{0,\xi,x}\cdot \dd B_s^{0},
\end{array}\right.
\end{eqnarray*}
\newcommand{\mc}{\nabla_k \mathcal }
 We will show that $F$ is a contraction mapping under the norm given by $\norm{y_s}^2 = \sup_s \E(\abs{y_s}^2)$ when $T$ is sufficiently small (for now assume $T < 1$). Indeed fix some $y^1, y^2$ denote by $\Delta y := y^1 - y^2$ and $\mc X^i$ be the solutions to the above system with $y_s = y_s^i$. Let $\Delta X := \mc X^1-\mc X^2$ and $\Delta Y := F(y^1) - F(y^2)$. Applying Gr\"onwall's inequality to the first equation in the system we see that $\Delta X$ satisfies $\norm{\Delta X} \leq C T \norm{\Delta y}$ where $C$ depends only on $L^H$. From the second equation we see that $\Delta Y$ satisfies the system 
 \begin{eqnarray*}
	\left\{\begin{array}{ll}
	\displaystyle \Delta Y_t &= \displaystyle \partial_{xx} G(X^{\xi}_T,\rho_T) \cdot  \Delta X_T\\[5pt]
	&+\displaystyle\tilde{\mathbb E}_{\mathcal{F}_T}\big[\partial_{\mu x}G(X_T^\xi,\rho_T, \tilde X_T^\xi)\cdot\tilde {\Delta X}_T\big]\\[5pt]
	&\displaystyle {-} \int_t^T\Big\{ \partial_{xx} H\big(X_s^{\xi},\rho_s,Y^{\xi}_s)\cdot  \Delta X^{\xi,x}_s+  \partial_{px} H\big(X_s^{\xi},\rho_s,Y^{\xi}_s)\cdot \Delta y_s\\[5pt]
	&\displaystyle +\tilde{\mathbb E}_{\mathcal{F}_s}\big[\partial_{\mu x}H\big(X^{\xi}_s, \rho_s,\tilde X_s^{\xi},Y^{\xi}_s)\cdot \tilde {\Delta X}_s^{\xi,x}\big]\Big\}\dd s\\[5pt]
	&\displaystyle {-} \int_t^T  \Delta \mathcal Z_s^{0,\xi,x}\cdot \dd B_s^{0},
	\end{array}\right.
	\end{eqnarray*}
The only term that may seem concerning is $\displaystyle\tilde{\mathbb E}_{\mathcal{F}_T}\big[\partial_{\mu x}G(X_T^\xi,\rho_T, \tilde X_T^\xi)\cdot\tilde {\Delta X}_T\big]$ since we want to claim that the short time interval depends only on the $W_2$-Lipschitz constant of $\pa_x G$ and not on the $W_1$-Lipschitz constant of $G$. However, note that 
\begin{equation*}
\displaystyle \\\abs{\tilde{\mathbb E}_{\mathcal{F}_T}\big[\partial_{\mu x}G(X_T^\xi,\rho_T, \tilde X_T^\xi)\cdot\tilde {\Delta X}_T\big]}	
\leq L_2^G \mathbb E(\Delta X_T^2)^{\frac12}
\end{equation*}
which follows from \eqref{WassersteinDerivativeDef}. It now follows from standard BSDE estimates (see \cite[Theorem 4.2.1]{Zhang2017}) that $\norm{\Delta Y} \leq C_1 T \norm{\Delta y} + C_2 \norm{\Delta X}$ where $C_1, C_2$ depend only on $L^H$ and $L_2^G$. Combining this with our estimate $\norm{\Delta X} \leq C T \norm{\Delta y}$ we see that $F$ is a contraction mapping as long as $T$ is sufficiently small and so there exists a unique solution to the above system. In particular the small time interval and corresponding bound will depend only on $L_2^G$ and not $\|\pa_{x\mu}G\|_{L^\infty}$. 

\end{proof}

\begin{cor}\label{cor: muk Y bound}
Suppose that $G,H$ satisfy Assumptions \ref{assum: G} and \ref{assum: H} and $\delta$ is given in Lemma \ref{lem:delta}. Then $\E \left[\abs{\nabla_{\mu_k} Y_{t_0}^{x,\xi,\tilde x}}^2\right]$ is bounded by a universal constant on $[t_0,T]$, for any $t_0$ with $T-t_0<\delta$. 
\end{cor}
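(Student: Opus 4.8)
The plan is to read \eqref{tdYmu} as a \emph{linear} backward SDE for the pair $(\nabla_{\mu_k} Y^{x,\xi,\tilde x}, \nabla_{\mu_k} Z^{0,x,\xi,\tilde x})$ on the interval $[t_0,T]$. The key structural observation is that the driving forward process $X^x_t = x + \beta B^{0,t_0}_t$ from \eqref{FBSDE2} is prescribed a priori, and the only place the unknown $\nabla_{\mu_k} Y$ enters the driver is through the genuinely linear term $\partial_{px}H(X^x_s,\rho_s,Y^{x,\xi}_s)\cdot \nabla_{\mu_k} Y^{x,\xi,\tilde x}_s$. All remaining contributions — both in the terminal condition and in the driver — are inhomogeneities built from the processes $\nabla_k \tilde X^{\xi,\tilde x}$ and $\nabla_k \tilde{\mathcal X}^{\xi,\tilde x}$, which are (conditionally independent copies of) the solutions to \eqref{tdYx} and \eqref{tdYx-}. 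Since $T-t_0<\delta$, Lemma \ref{lem:delta} applies on $[t_0,T]$ and furnishes the uniform bounds $\E[\sup_{s}|\nabla_k \tilde X^{\xi,\tilde x}_s|^2]\le C$ and $\E[\sup_{s}|\nabla_k \tilde{\mathcal X}^{\xi,\tilde x}_s|^2]\le C$ with $C$ universal and independent of the base point. Hence \eqref{tdYmu} is a linear BSDE with a uniformly bounded driver coefficient and square-integrable data.

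I would then quantify these inhomogeneities and invoke a standard a priori estimate for linear BSDEs. For the terminal datum, using $|\partial_{\mu x}G|\le L^G$ from Assumption \ref{assum: G} together with conditional Jensen, one controls its $L^2(\mathcal{F}_T)$ norm by $L^G\big(\E[\sup_s|\nabla_k\tilde X^{\xi,\tilde x}_s|^2]^{1/2}+\E[\sup_s|\nabla_k\tilde{\mathcal X}^{\xi,\tilde x}_s|^2]^{1/2}\big)\le C$. For the inhomogeneous part of the driver one argues identically with $|\partial_{\mu x}H|\le L^H$, obtaining a uniform-in-$s$ $L^2$ bound by $C$, while the linear coefficient obeys $|\partial_{px}H|\le L^H$. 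Feeding these into the classical estimate \cite[Theorem 4.2.1]{Zhang2017} yields $\E[\sup_{s\in[t_0,T]}|\nabla_{\mu_k}Y^{x,\xi,\tilde x}_s|^2]\le C$ for some $C=C(L^G,L^H,c_0,T)$, and evaluating at $s=t_0$ delivers the claim.

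The hard part will not be the estimate itself but ensuring that the resulting constant is genuinely \emph{universal}, i.e. depends only on $L^G,L^H,c_0,T$ and not on $\|\partial_{x\mu}V\|_{L^\infty}$ nor on the $W_1$-Lipschitz constant of $\partial_x G$. This is exactly where the structure must be exploited: the terminal and source terms carry $\partial_{\mu x}G$ and $\partial_{\mu x}H$, which are directly bounded by $L^G$ and $L^H$, multiplied against processes whose $L^2$ bounds from Lemma \ref{lem:delta} are already universal (those bounds were arranged to depend on $L_2^G$ rather than on $\|\partial_{x\mu}G\|_{L^\infty}$, via \eqref{WassersteinDerivativeDef}). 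Consequently no step reintroduces a dependence on the $W_1$-constant, and the BSDE constant stays universal. Beyond this bookkeeping I expect no difficulty, since \eqref{tdYmu} contains no quadratic nonlinearity and no self-coupling through the measure; the whole content of the corollary is that the a priori bounds of Lemma \ref{lem:delta} propagate linearly to $\nabla_{\mu_k}Y$.
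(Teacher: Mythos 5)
Your proposal is correct and is essentially the paper's own argument: the paper also observes that \eqref{tdYmu} is a linear BSDE whose coefficients (the terminal datum and driver inhomogeneities built from $\nabla_k \tilde X^{\xi,\tilde x}$, $\nabla_k \tilde{\mathcal X}^{\xi,\tilde x}$ via Lemma \ref{lem:delta}, and the bounds $|\partial_{\mu x}G|\le L^G$, $|\partial_{\mu x}H|,|\partial_{px}H|\le L^H$) are bounded by universal constants, so standard linear BSDE estimates give the claim. Your write-up simply makes explicit the bookkeeping that the paper leaves implicit.
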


\begin{proof}
We see that $\nabla_{\mu_k} Y_{t_0}^{x,\xi,\tilde x}$ is the solution to a linear BSDE with coefficients that are bounded by universal constants. 
\end{proof}

\subsection{Proof of the representation formula}
The proof is broken into four steps. In the first step we develop a system of FBSDE that gives a representation for $\mathbb{E}\big[\partial_{\mu x} V({t_0},x, \mu, \xi) \eta\big]$ where $\eta\in \mathbb{L}^2({\cal F}_{{t_0}})$ is arbitrary. In the next two steps we prove the representation formula for discrete and absolutely continuous measures respectively. Finally we prove it for general measures. Let us remark that the approach via passing through measures with discrete supports is a natural and typical one in the context of such representation formulas. We refer to the proof of \cite[Proposition 5.55]{Carmona2018} and to the discussion on pages 218-219 from \cite{CD1} on this matter.

\begin{prop}\label{prop:repr}
Assume that $G,H$ satisfy Assumptions \ref{assum: G} and \ref{assum: H} and $\delta$ is given in Lemma \ref{lem:delta}. Recall the system \eqref{FBSDE2} and define $\vec{U}(t_0,x,\mu):=Y_{t_0}^{x,\xi}$ for any $t_0$ with $T-t_0<\delta$. Then

\begin{equation}\label{pamuV}
	\partial_{\mu_k } \vec{U}(t_0,x,\mu,\tilde x)=\nabla_{\mu_k}Y_{t_0}^{x,\xi,\tilde x}
\end{equation}
and, moreover, $\pa_{\mu} \vec{U}$ is uniformly bounded by a universal constant.
\end{prop}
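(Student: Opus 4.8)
The plan is to reduce the uniform bound to the representation \eqref{pamuV}, and then to prove \eqref{pamuV} by the four-step scheme announced above. I first record that, since $X_{t_0}^x = x$ and $\rho_{t_0} = \mu$, Lemma \ref{lem:representation} gives $\vec{U}(t_0,x,\mu) = Y_{t_0}^{x,\xi} = \pa_x V(t_0,x,\mu)$, so \eqref{pamuV} is exactly the identity $\pa_{\mu_k x} V(t_0, x, \mu, \tilde x) = \nabla_{\mu_k} Y_{t_0}^{x,\xi,\tilde x}$. The crucial preliminary observation is that $\nabla_{\mu_k} Y_{t_0}^{x,\xi,\tilde x}$ is in fact \emph{deterministic}: it is the initial value of the BSDE \eqref{tdYmu} on $[t_0,T]$, whose coefficients and terminal datum depend on $\xi$ only through the $\mathbb{F}^0$-adapted conditional flow $(\rho_s)$ and through copies averaged by $\tilde{\mathbb{E}}_{\mathcal{F}_s}$, and otherwise only through the post-$t_0$ common noise $B^{0,t_0}$; being $\mathcal{F}_{t_0}^0$-measurable and independent of $\mathcal{F}_{t_0}^0$, it is a.s. constant. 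Consequently the $L^2$-estimate of Corollary \ref{cor: muk Y bound} is a genuine pointwise bound, and the uniform boundedness of $\pa_\mu \vec{U}$ by a universal constant will follow at once from \eqref{pamuV}. The entire substance of the proposition is therefore the representation itself.

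For Step 1 I would fix an arbitrary $\eta \in \mathbb{L}^2(\mathcal{F}_{t_0})$ and differentiate along $\epsilon \mapsto \mathcal{L}_{\xi+\epsilon\eta}$. By \eqref{WassersteinDerivativeDef} the directional derivative $\frac{d}{d\epsilon}\big|_{\epsilon=0}\vec{U}(t_0,x,\mathcal{L}_{\xi+\epsilon\eta})$ coincides with $\mathbb{E}\big[\langle \pa_\mu \vec{U}(t_0,x,\mu,\xi),\eta\rangle\big]$, so it suffices to compute this derivative on the FBSDE side. The perturbation of $\xi$ enters \eqref{FBSDE2} only through the flow $\rho_s = \mathcal{L}_{X_s^\xi|\mathcal{F}_s^0}$ produced by the coupled system \eqref{FBSDE1}; linearizing \eqref{FBSDE1} in the direction $\eta$ yields a linear McKean--Vlasov FBSDE for the variation of the forward flow, and linearizing \eqref{FBSDE2} then yields a linear BSDE for the variation of $Y^{x,\xi}$, driven by the $\pa_\mu$-terms of $G$ and $H$ tested against that variation. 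The analytic work here is to justify the differentiation, i.e. the $L^2$-convergence of the difference quotients of both the forward flow and the backward equation on the short horizon $T-t_0 < \delta$; I would deduce this from the well-posedness and uniform $L^2$-estimates of the linearized systems in Lemma \ref{lem:delta} together with standard stability estimates for FBSDEs with uniformly Lipschitz coefficients.

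Steps 2--4 identify this $\eta$-representation with the pointwise object $\nabla_{\mu_k} Y_{t_0}^{x,\xi,\tilde x}$. For a discrete measure $\mu = \sum_i p_i \delta_{a_i}$ I would represent $\xi$ as a step variable and use $\pa_\mu \vec{U}(t_0,x,\mu,a_i) = p_i^{-1}\nabla_{a_i}\vec{U}(t_0,x,\mu)$; differentiating \eqref{FBSDE2} in the finite-dimensional variable $a_i$ splits, exactly as in the heuristic of the Intuition subsection, into a \emph{space-gradient} part governed by \eqref{tdYx} (the response of the trajectory started at the displaced atom to the shift of its own initial point) and a \emph{measure-shift} part governed by \eqref{tdYx-} (the response of a generic trajectory to the displacement of that atom, propagated through $\rho_s$). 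Matching the two families of terms against the conditionally independent copy, together with the $\tilde{\mathbb{E}}_{\mathcal{F}_s}$-expectations, reproduces exactly the BSDE \eqref{tdYmu} and hence \eqref{pamuV} at the atoms. I would then pass to absolutely continuous measures and, finally, to an arbitrary $\mu$ by $W_2$-approximation: the uniform continuity of the second-order derivatives of $G$ and $H$ (Assumptions \ref{assum: G} and \ref{assum: H}) together with the uniform $L^2$-bounds of Lemma \ref{lem:delta} yield stability of all the forward and backward equations under approximation of $\mu$, so that both sides of \eqref{pamuV} pass to the limit.

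The main obstacle is the matching carried out in Steps 1--2: one must decompose the variation of the McKean--Vlasov flow into the space-gradient family $\nabla_k X^{\xi,\cdot}$ and the measure-shift family $\nabla_k \mathcal{X}^{\xi,\cdot}$ with the correct weights, and track faithfully the conditionally independent copies entering the $\tilde{\mathbb{E}}_{\mathcal{F}_s}$-terms, so that the linearized backward equation is precisely \eqref{tdYmu} rather than merely a system of the same type. By comparison the approximation step is more routine, but it genuinely uses the uniform-continuity hypotheses, without which the coefficients of the linearized systems need not converge along a $W_2$-approximating sequence.
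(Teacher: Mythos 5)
Your overall architecture coincides with the paper's: a Step 1 giving the directional-derivative identity $\mathbb{E}\big[\partial_{\mu_1}\vec{U}(t_0,x,\mu,\xi)\,\eta\big]=\delta Y^{x,\xi,\eta e_1}_{t_0}$ by linearizing \eqref{FBSDE1} and then \eqref{FBSDE2}, then discrete measures, then absolutely continuous ones, then general ones, and finally the observation that determinism upgrades the $L^2$-bound of Corollary \ref{cor: muk Y bound} to a pointwise bound. Your Step 1 and your concluding boundedness argument are correct and match the paper.

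The genuine gap is in your Step 2. You claim that differentiating \eqref{FBSDE2} at a discrete measure in the atom location $a_i$ splits into a space-gradient part ``governed by \eqref{tdYx}'' and a measure-shift part ``governed by \eqref{tdYx-}'', and that the matching ``reproduces exactly the BSDE \eqref{tdYmu} and hence \eqref{pamuV} at the atoms.'' This is false when the atom has positive mass $p_i>0$: displacing $a_i$ perturbs the conditional law flow $(\rho_s)$ itself with weight $p_i$, so the trajectory started at $a_i$ feels its own displacement through the measure argument of $H$ and $G$. The honest linearization therefore contains self-interaction terms weighted by $p_i$ --- in the paper's notation, terms such as
\[
p_i\,\tilde{\mathbb{E}}_{\mathcal{F}_s}\Big[(\nabla_{1}\tilde X_s^{\xi,x_i})^{\top}\partial_{\mu p}H\big(X_s^{\xi,x_i},\rho_s,\tilde X_s^{\xi,x_i},Y_s^{\xi,x_i}\big)\Big]
\]
appearing in \eqref{tdXxi-}, and the analogous $p_i$-weighted terms in \eqref{tdXxi-2} and \eqref{eq:nabla mu X2} --- which are absent from \eqref{tdYx}, \eqref{tdYx-} and \eqref{tdYmu}. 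The paper's proof makes this point explicitly right after \eqref{pamuVdiscrete}: the discrete systems are \emph{different} from \eqref{tdYx} and \eqref{tdYx-}, so Step 2 only yields an \emph{alternative} discrete representation, not the identity \eqref{pamuV}. Consequently, your assessment that Steps 3--4 are ``more routine'' inverts the logical weight of the proof: the passage to absolutely continuous measures is precisely where the atom masses $\mathbb{P}\big(\xi_n=\vec{i}(\tilde x)/n\big)$ tend to zero, the self-interaction terms disappear, and the representation first becomes the one asserted in \eqref{pamuV}; the general case then requires a second approximation, using the continuity of $(x,\mu,\tilde x)\mapsto \nabla_{\mu_1}Y^{x,\xi,\tilde x}_{t_0}$ and testing against $\varphi(\xi)$, rather than mere stability bookkeeping. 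If you repair Step 2 so that it produces the $p_i$-corrected systems, and let the two limit passages carry the burden of identifying the formula with \eqref{tdYmu}, your argument becomes the paper's.
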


\begin{proof}
{\it Step 1.} For any  $\xi\in \mathbb{L}^2({\cal F}_{{t_0}}, \mu)$ and any scalar random variable $\eta\in \mathbb{L}^2({\cal F}_{{t_0}}, \mathbb{R})$, following standard arguments and by  the stability property of the involved systems we have
\begin{eqnarray}
	\label{tdXconv}
	\lim_{\varepsilon\to 0} \mathbb{E}\left[\sup_{{t_0}\le t\le T} \Big| {1\over \varepsilon}\big[X^{\xi + \varepsilon \eta e_1}_t - X^\xi_t\big] - \delta X^{\xi, \eta e_1}_t\Big|^2\right] = 0,
\end{eqnarray}
where $\left(\delta  X^{\xi, \eta e_1}, \delta  Y^{\xi, \eta e_1}, \delta  Z^{0,\xi, \eta e_1}\right)$ satisfies the linear McKean--Vlasov FBSDE
\begin{eqnarray}
	\label{tdmuFBSDE}
	\left\{\begin{array}{ll}
		\displaystyle  \delta  X^{\xi, \eta e_1}_t&\displaystyle=\eta e_1 {-} \int_{{t_0}}^t \Big\{ ( \delta  X_s^{\xi,\eta e_1})^{\top}\partial_{xp} H\big(X^{\xi}_s,\rho_s,Y^{\xi}_s)+( \delta  Y_s^{\xi,\eta e_1})^{\top}\partial_{pp} H\big(X^{\xi}_s,\rho_s,Y^{\xi}_s) \\ &+ \tilde {\mathbb{E}}_{{\cal F}_s}\big[\partial_{\mu p} H(X_s^{\xi},\rho_s, \tilde X^\xi_s,Y_s^{\xi})\cdot\delta  \tilde X^{\xi, \eta e_1}_{{s}}\big]\Big\}\dd s \\ [7pt]
		\displaystyle \delta  Y_t^{\xi, \eta e_1}&\displaystyle= \partial_{xx} G(X_T^{\xi},\rho_T) \cdot \delta  X^{\xi, \eta e_1}_T + \tilde {\mathbb{E}}_{{\cal F}_T}\big[\partial_{\mu x} G(X_T^{\xi},\rho_T, \tilde X^\xi_T)\cdot \delta  \tilde X^{\xi, \eta e_1}_T\big]\\
		\displaystyle & \displaystyle{-}\int_t^T\Big\{  \partial_{x x} H\big(X^{\xi}_s, \rho_s,Y^{\xi}_s)\cdot \delta  X^{\xi,\eta e_1}_s+   \partial_{px} H\big(X^{\xi}_s,\rho_s, Y^{\xi}_s)\cdot \delta  Y^{\xi,\eta e_1}_s\\
		\displaystyle &\displaystyle+\tilde {\mathbb{E}}_{{\cal F}_s}\big[\partial_{\mu x} H(X_s^{\xi},\rho_s, \tilde X^\xi_s,Y_s^{\xi})\cdot\delta  \tilde X^{\xi, \eta e_1}_{{s}}\big]\Big\}\dd s {-} \int_t^T\delta  Z_s^{0,\xi,\eta e_1}\cdot \dd B_s^{0}.
	\end{array}\right.
\end{eqnarray}

Specifically let $\delta  \Phi_t^{\xi, \eta e_1, \varepsilon} = \frac1\varepsilon(\Phi^{\xi + \varepsilon \eta e_1}_t - \Phi^\xi_t)$ for $\Phi \in \{X, Y, Z^0\}$. By substituting and subtracting in \eqref{FBSDE1} we see
\[
\delta X_t^{\xi, \eta e_1, \varepsilon} 
&= \eta e_1  - \frac1\varepsilon\int_{{t_0}}^t \Big\{\partial_p H(X_s^{\xi + \varepsilon \eta e_1}, \rho_s^{\xi + \varepsilon \eta e_1}, Y_s^{\xi + \varepsilon \eta e_1}) - \partial_p H(X_s^{\xi }, \rho_s^{\xi }, Y_s^{\xi})\Big\} \dd s\\
&= \eta e_1 {-} \int_{{t_0}}^t \Big\{ ( \delta  X_s^{\xi, \eta e_1, \varepsilon})^{\top}\partial_{xp} H\big(X^{\xi}_s,\rho_s,Y^{\xi}_s)+( \delta  Y_s^{\xi, \eta e_1, \varepsilon})^{\top}\partial_{pp} H\big(X^{\xi}_s,\rho_s,Y^{\xi}_s) \\ &+ \tilde {\mathbb{E}}_{{\cal F}_s}\big[\partial_{\mu p} H(X_s^{\xi},\rho_s, \tilde X^\xi_s,Y_s^{\xi})\cdot\delta  \tilde X^{\xi, \eta e_1, \varepsilon}_{{s}}\big]\Big\}\dd s + O(\varepsilon) 
\]
and
\[
\delta Y_t^{\xi, \eta e_1, \varepsilon} 
&=\partial_{xx} G(X_T^{\xi},\rho_T) \cdot \delta  X^{\xi, \eta e_1, \varepsilon}_T + \tilde {\mathbb{E}}_{{\cal F}_T}\big[\partial_{\mu x} G(X_T^{\xi},\rho_T, \tilde X^\xi_T)\cdot \delta  \tilde X^{\xi, \eta e_1, \varepsilon}_T\big]\\
\displaystyle & \displaystyle {-} \int_t^T\Big\{  \partial_{x x} H\big(X^{\xi}_s, \rho_s,Y^{\xi}_s)\cdot \delta  X^{\xi,\eta e_1, \varepsilon}_s+   \partial_{px} H\big(X^{\xi}_s,\rho_s, Y^{\xi}_s)\cdot \delta  Y^{\xi,\eta e_1, \varepsilon}_s\\
\displaystyle &\displaystyle+\tilde {\mathbb{E}}_{{\cal F}_s}\big[\partial_{\mu x} H(X_s^{\xi},\rho_s, \tilde X^\xi_s,Y_s^{\xi})\cdot\delta  \tilde X^{\xi, \eta e_1, \varepsilon}_T\big]\Big\}\dd s {-} \int_t^T\delta  Z_s^{0,\xi,\eta e_1, \varepsilon}\cdot \dd B_s^0 + O(\varepsilon).
\]
Note that aside from the $O(\varepsilon)$ term, $\left(\delta X_t^{\xi, \eta e_1, \varepsilon}, \delta Y_t^{\xi, \eta e_1, \varepsilon},{\delta  Z^{0,\xi, \eta e_1,\varepsilon}}\right)$ satisfies the exact same FBSDE system as $\left(\delta X_t^{\xi, \eta e_1,}, \delta Y_t^{\xi, \eta e_1},{\delta  Z^{0,\xi, \eta e_1}}\right).$ By the stability of FBSDE we get
\[
\lim_{\varepsilon\to 0} \mathbb{E}\left[\sup_{{t_0}\le t\le T} \Big| \delta X^{\xi, \eta e_1, \varepsilon}_t - \delta X^{\xi, \eta e_1}_t\Big|^2\right] = 0
\]
as desired.

Similarly to \eqref{tdXconv}, {using \eqref{FBSDE2}}, one can show that
\begin{eqnarray}
\label{tdYconv}
\lim_{\varepsilon\to 0} \mathbb{E}\left[\sup_{{t_0}\le t\le T} \Big| {1\over \varepsilon}\big[Y^{x, \xi + \varepsilon \eta e_1}_t - Y^{x,\xi}_t\big] - \delta  Y^{x, \xi, \eta e_1}_t\Big|^2\right] = 0,
\end{eqnarray}
where $\Big(\delta  Y^{x,\xi, \eta e_1}, \delta  Z^{0,x, \xi, \eta e_1}\Big)$ satisfies the linear (standard) BSDE
\begin{eqnarray}\label{eq:nabla mu X}
\left.\begin{array}{c}
\displaystyle \delta  Y_t^{x, \xi, \eta e_1}= \displaystyle  \tilde {\mathbb{E}}_{{\cal F}^0_T}\big[\partial_{\mu x} G(X_T^{x},\rho_T, \tilde X^\xi_T) \cdot\delta  \tilde X^{\xi, \eta e_1}_T\big] {-}\int_t^T\delta  Z_s^{0,x,\xi, \eta e_1}\cdot \dd B_s^0\\[7pt]
\displaystyle {-}\int_t^T \Big\{\partial_{p x} H(X_s^{x},\rho_s,Y_s^{x,\xi})\cdot \delta  Y^{x,\xi,\eta e_1}_s+\tilde {\mathbb{E}}_{{\cal F}_s}\big[\partial_{\mu x} H(X_s^{x},\rho_s,\tilde X_s^{\xi},Y_s^{x,\xi})\cdot \delta  \tilde X_s^{\xi,\eta e_1}\big]\Big\}\dd s 
\end{array}\right.
\end{eqnarray}
In particular, \eqref{tdYconv} implies,
\begin{eqnarray}
	\label{pamuVconv}
	\lim_{\varepsilon\to 0}  \Big| {1\over \varepsilon}\big[\vec{U}({t_0},x, {\cal L}_{\xi + \varepsilon \eta e_1}) -\vec{U}({t_0},x,{\cal L}_\xi)\big] - \delta  Y^{x, \xi, \eta e_1}_{{t_0}}\Big|^2\ = 0.
\end{eqnarray}
Thus, by the definition of $\partial_{\mu } \vec{U}$,
\begin{eqnarray}
	\label{pamuV1}
	\mathbb{E}\big[\partial_{\mu } \vec{U}({t_0},x, \mu, \xi) \eta e_1\big]=\mathbb{E}\big[\partial_{\mu_1 } \vec{U}({t_0},x, \mu, \xi) \eta\big] =  \delta  Y^{x, \xi, \eta e_1}_{{t_0}}.
\end{eqnarray}

{\it Step 2.} In this step we assume that $\xi$ (or say, $\mu$) is discrete: $p_i = \mathbb{P}(\xi = x_i)$, $i=1,\cdots, n$. {In particular, we have that $\mu=\sum_{i=1}^n p_i x_i$, for some $\{x_1,\dots,x_n\}\subset\R^d.$}
Fix $i$ and consider the following system of McKean--Vlasov FBSDEs: for $j=1,\cdots, n$,
\begin{eqnarray}
	\label{tdmuYij}
	\left\{\begin{array}{ll}
		\displaystyle  \nabla_{\mu_1} X^{i,j}_t&\displaystyle= 
		{\delta_{ij}}e_1 {-} \int_{{t_0}}^t \Big\{ \sum_{k=1}^n p_k \tilde {\mathbb{E}}_{{\cal F}_s}\Big[ (\nabla_{\mu_1} \tilde X^{i,k}_s)^{\top}\partial_{\mu p} H(X_s^{\xi,x_j},\rho_s, \tilde X^{\xi,x_k}_T,Y_s^{\xi,x_j}) \Big]\\[7pt]
		\displaystyle & \displaystyle+(\nabla_{\mu_1} X_s^{i,j})^{\top}\partial_{xp} H\big(X^{\xi,x_j}_s,\rho_s,Y^{\xi,x_j}_s)+(\nabla_{\mu_1} Y_s^{i,j} )^{\top}\partial_{pp} H\big(X^{\xi,x_j}_s,\rho_s,Y^{\xi,x_j}_s)\Big\}\dd s, \\[7pt]
		\displaystyle \nabla_{\mu_1} Y_t^{i,j}& = \displaystyle\partial_{xx} G(X_T^{\xi,x_j},\rho_T) \cdot\nabla_{\mu_1} X^{i,j}_T+\sum_{k=1}^n p_k \tilde {\mathbb{E}}_{{\cal F}_T}\Big[\partial_{\mu x} G(X_T^{\xi,x_j},\rho_T, \tilde X^{\xi,x_k}_T)\cdot \nabla_{\mu_1} \tilde X^{i,k}_T\Big] \\[7pt]
		\displaystyle &\displaystyle - \int_t^T  \Big\{\partial_{xx} H\big(X^{\xi,x_j}_s,\rho_s,Y^{\xi,x_j}_s)\cdot\nabla_{\mu_1} X^{i,j}_s+\partial_{px} H\big(X^{\xi,x_j}_s,\rho_s,Y^{\xi,x_j}_s)\cdot\nabla_{\mu_1} Y^{i,j}_s\\[7pt]
		\displaystyle&\displaystyle +\sum_{k=1}^n p_k \tilde {\mathbb{E}}_{{\cal F}_s}\Big[\partial_{\mu x} H(X_s^{\xi,x_j},\rho_s, \tilde X^{\xi,x_k}_s,Y_s^{\xi,x_j}) \cdot\nabla_{\mu_1} \tilde X^{i,k}_s \Big\} \dd s\\[7pt]
		\displaystyle& \displaystyle {-} \int_t^T\nabla_{\mu_1} Z_s^{0,i,j}\cdot \dd B_s^{0},
	\end{array}\right.
\end{eqnarray}
where $\delta_{ij}$ stands for Kronecker's symbol. In the above system $\nabla_{\mu_1} X^{i,j}_t$ represents perturbing $x_i$ in $\mu$ in the $e_1$ direction and measuring the variation in $X_t$ at $X^{\xi, x_j}$ (the place where $x_j$ has moved to by time $t$). The interpretation for $\nabla_{\mu_1} Y_t^{i,j}$ is similar. 

For any $\Phi\in\{X, Y, Z^0\}$, we define
\begin{eqnarray*}
	\nabla_{1} \Phi^{\xi, x_i} := \nabla_{\mu_1} \Phi^{i,i},\quad  \nabla_{1} \Phi^{\xi, x_i, *}:= {1\over p_i} \sum_{j\neq i} \nabla_{\mu_1} \Phi^{i,j} {\bf 1}_{\{\xi=x_j\}}.
\end{eqnarray*}
Note that $\Phi^\xi =  \sum_{j=1}^n \Phi^{\xi, x_j} {\bf 1}_{\{\xi=x_j\}}$. Since \eqref{tdmuYij} is linear, one can easily check that

\begin{align}
	\displaystyle  \nabla_1 X^{\xi, x_i}_t&\displaystyle=e_1{-} \int_{{t_0}}^t\Big\{(\nabla_1 X_s^{\xi,x_i})^{\top} \partial_{xp} H\big(X^{\xi, x_i}_s,\rho_s,Y^{\xi, x_i}_s)+(\nabla_1 Y_s^{\xi,x_i})^{\top} \partial_{pp} H\big(X^{\xi, x_i}_s,\rho_s,Y^{\xi, x_i}_s)
	\nonumber\\
	&\displaystyle +p_i\tilde {\mathbb{E}}_{{\cal F}_s}\Big[(\nabla_{1}\tilde X_s^{\xi,x_i})^{\top}\partial_{\mu p}H(X_s^{\xi,x_i},\rho_s,\tilde X_s^{\xi,x_i},Y_s^{\xi,x_i})\label{tdXxi-} \\
	& +(\nabla_1\tilde X_s^{\xi,x_i, *})^{\top}\partial_{\mu p}H(X_s^{\xi,x_i},\rho_s,\tilde X_s^{\xi},Y_s^{\xi,x_i}\Big]\Big\}\dd s, \nonumber
\end{align}
\begin{align} 
	\displaystyle  \nabla_1 X^{\xi, x_i, *}_t&\displaystyle=- \int_{{t_0}}^t\Big\{ (\nabla_1 X_s^{\xi,x_i, *})^{\top} \partial_{xp} H\big(X^{\xi}_s,\rho_s,Y^{\xi}_s)+(\nabla_1 Y_s^{\xi,x_i, *})^{\top}\partial_{pp} H\big(X^{\xi}_s,\rho_s,Y^{\xi}_s)\nonumber\\
	\displaystyle &\displaystyle +\tilde {\mathbb{E}}_{{\cal F}_s}\Big[(\nabla_{1}\tilde X_s^{\xi,x_i})^{\top}\partial_{\mu p}H(X_s^{\xi},\rho_s,\tilde X_s^{\xi,x_i},Y_s^{\xi})\label{tdXxi-new} \\
	& +(\nabla_1\tilde X_s^{\xi,x_i, *})^{\top}\partial_{\mu p}H(X_s^{\xi},\rho_s,\tilde X_s^{\xi},Y_s^{\xi})\Big]{\bf 1}_{\{\xi\neq x_i\}}\Big\}\dd s\nonumber
\end{align}
\begin{align} 
	\label{tdXxi-2}
	\displaystyle \nabla_1 Y_t^{\xi, x_i}&\displaystyle= \partial_{xx} G(X_T^{\xi,x_i},\rho_T) \cdot \nabla_1 X^{\xi, x_i}_T - \int_t^T\nabla_1 Z_s^{0,\xi,x_i}\dd B_s^{0}\nonumber\\
	\displaystyle&\displaystyle + p_i\tilde {\mathbb{E}}_{{\cal F}_T}\big[\partial_{\mu x} G(X_T^{\xi,x_i},\rho_T, \tilde X^{\xi,x_i}_T)\cdot \nabla_1\tilde X_s^{\xi,x_i}+\partial_{\mu x} G(X_T^{\xi,x_i},\rho_T, \tilde X^\xi_T)\cdot \nabla_1 \tilde X^{\xi, x_i, *}_T\big]\\
	\displaystyle &\displaystyle - \int_t^T \Big\{ \partial_{xx} H\big(X^{\xi,x_i}_s,\rho_s,Y^{\xi,x_i}_s)\cdot \nabla_1 X^{\xi,x_i}_s+ \partial_{px} H\big(X^{\xi,x_i}_s,\rho_s,Y^{\xi,x_i}_s)\cdot \nabla_1 Y^{\xi,x_i}_s\nonumber\\
	\displaystyle&\displaystyle + p_i\tilde {\mathbb{E}}_{{\cal F}_s}\big[\partial_{\mu x} H(X_s^{\xi,x_i},\rho_s, \tilde X^{\xi,x_i}_s,Y_s^{\xi,x_i})\cdot \nabla_1\tilde X_s^{\xi,x_i} 
	+ \partial_{\mu x} H(X_s^{\xi,x_i},\rho_s,  \tilde X^\xi_s,Y_s^{\xi,x_i})\cdot \nabla_1 \tilde X^{\xi, x_i, *}_T)\big]\Big\}\dd s \nonumber
\end{align}

\begin{align} 
	\displaystyle \nabla_1 Y_t^{\xi, x_i, *}&\displaystyle= \partial_{xx} G(X_T^{\xi},\rho_T)\cdot \nabla_1 X^{\xi, x_i, *}_T - \int_t^T\nabla_1 Y_s^{0,\xi,x_i, *}\cdot \dd B_s^{0}
	\nonumber\\
	\displaystyle&\displaystyle  +\tilde {\mathbb{E}}_{{\cal F}_T}\big[\partial_{\mu x} G(X_T^{\xi},\rho_T, \tilde X^{\xi,x_i}_T)\cdot \nabla_1\tilde X_s^{\xi,x_i} +\partial_{\mu x} G(X_T^{\xi},\rho_T, \tilde X^\xi_T)\cdot \nabla_1 \tilde X^{\xi, x_i, *}_T\big]{\bf 1}_{\{\xi\neq x_i\}}
	\nonumber\\
	\displaystyle &\displaystyle  - \int_t^T\bigg\{  \partial_{xx} H\big(X^{\xi}_s,\rho_s,Y^{\xi}_s)\cdot \nabla_1 X^{\xi,x_i, *}_s+ \partial_{px} H\big(X^{\xi}_s,\rho_s,Y^{\xi}_s)\cdot \nabla_1 Y^{\xi,x_i, *}_s\label{tdXxi-2new}\\
	\displaystyle&\displaystyle +\tilde {\mathbb{E}}_{{\cal F}_s}\big[\partial_{\mu x} H(X_s^{\xi},\rho_s, \tilde X^{\xi,x_i}_s,Y_s^{\xi})\cdot \nabla_1\tilde X_s^{\xi,x_i}
	\ +\partial_{\mu x} H(X_s^{\xi},\rho_s,  \tilde X^\xi_s,Y_s^{\xi})\cdot \nabla_1 \tilde X^{\xi, x_i-}_T)\big]{\bf 1}_{\{\xi\neq x_i\}}\bigg\}\dd s\nonumber
\end{align}

Since \eqref{tdmuFBSDE} is also linear, one can check that, for $\Phi\in\{ {X, Y, Z^0}\}$,
\begin{eqnarray}
	\label{tdXdecom}
	\delta  \Phi^{\xi, {\bf 1}_{\{\xi=x_i\}}e_1} = \nabla_1 \Phi^{\xi, x_i} {\bf 1}_{\{\xi=x_i\}}+ p_i \nabla_1   \Phi^{\xi, x_i, *}.
\end{eqnarray}
Moreover, note that
\begin{align*}
	\displaystyle & \tilde {\mathbb{E}}_{{\cal F}_T}\big[\partial_{\mu x} G(X_T^{x,\xi},\rho_T, \tilde X^\xi_T)\cdot\delta \tilde X^{\xi, {\bf 1}_{\{\xi=x_i\}}e_1}_T\big]\medskip\\
	\displaystyle &=   \tilde {\mathbb{E}}_{{\cal F}_T}\Big[\partial_{\mu x} G(X_T^{x,\xi},\rho_T, \tilde X^\xi_T)\cdot\big[ \nabla_1 \tilde X_T^{\xi, x_i} {\bf 1}_{\{\xi=x_i\}}+ p_i \nabla_1   \tilde X_T^{\xi, x_i, *}\big]\Big] \medskip\\
	\displaystyle&= p_i \tilde {\mathbb{E}}_{{\cal F}_T}\Big[\partial_{\mu x} G(X_T^{x,\xi},\rho_T, \tilde X^{\xi, x_i}_T)\cdot \nabla_1 \tilde X^{\xi, x_i}_T+\partial_{\mu x} G(X_T^{x,\xi},\rho_T, \tilde X^\xi_T)\cdot\nabla_1 \tilde X^{\xi, x_i, *}_T\Big]
\end{align*}
and similarly
\begin{align*}
	\displaystyle&  \tilde {\mathbb{E}}_{{\cal F}_s}\big[\partial_{\mu x} H(X_s^{x,\xi},\rho_s, \tilde X^\xi_s,Y_s^{x,\xi})\cdot\delta \tilde X^{\xi, {\bf 1}_{\{\xi=x_i\}}e_1}_s\big]\medskip\\
	\displaystyle&= p_i \tilde {\mathbb{E}}_{{\cal F}_s}\Big[\partial_{\mu x} H(X_s^{x,\xi},\rho_s, \tilde X^{\xi, x_i}_s,Y_s^{x,\xi})\cdot\nabla_1 \tilde X^{\xi, x_i}_s+\partial_{\mu x} H(X_s^{x,\xi},\rho_s,\tilde X^\xi_s,Y_s^{x,\xi})\cdot \nabla_1 \tilde X^{\xi, x_i, *}_s)\Big].
\end{align*}
Plug this into \eqref{eq:nabla mu X}, we obtain 
\begin{eqnarray}
	\label{tdYxipi}
	\delta \Phi_t^{x, \xi, {\bf 1}_{\{\xi=x_i\}}e_1}=  p_i \nabla_{\mu_1}\Phi^{x,\xi, x_i}_t,
\end{eqnarray}
where
\begin{eqnarray}
	\label{eq:nabla mu X2}
	\left.\begin{array}{ll}
		\displaystyle  \nabla_{\mu_1} Y^{x,\xi, x_i}_t&\displaystyle= \tilde {\mathbb{E}}_{{\cal F}_T}\big[\partial_{\mu x} G(X_T^x,\rho_T, \tilde X^{\xi,x_i}_T)\cdot  \nabla_1 \tilde X^{\xi, x_i}_T+\partial_{\mu x} G(X_T^x,\rho_T, \tilde X^\xi_T)\cdot  \nabla_1 \tilde X^{\xi, x_i, *}_T)\big]  \\[5pt]
		\displaystyle &\displaystyle {-} \int_t^T \bigg\{\partial_{px} H(X_s^{x,\xi},\rho_s,Y_s^{x,\xi})\cdot \delta Y^{x,\xi,x_i}_s\\[5pt]
		\displaystyle&\displaystyle+ \tilde {\mathbb{E}}_{{\cal F}_s}\Big[\partial_{\mu x} H(X_s^{x,\xi},\rho_s, \tilde X^{\xi, x_i}_s,Y_s^{x,\xi})\cdot\nabla_1 \tilde X^{\xi, x_i}_s+\partial_{\mu x} H(X_s^{x,\xi},\rho_s,\tilde X^\xi_s,Y_s^{x,\xi})\cdot \nabla_1 \tilde X^{\xi, x_i, *}_s\Big]\bigg\}\dd s\\[5pt]
		\displaystyle &\displaystyle{-}\int_t^T\nabla_{\mu_1} Z_s^{0,x,\xi, x_i}\cdot \dd B_s^{0}. 
	\end{array}\right.
\end{eqnarray}
In particular, by setting $\eta = {\bf 1}_{\{\xi=x_i\}}$ in \eqref{pamuV1} we obtain:
\begin{eqnarray}
	\label{pamuVdiscrete}
	\partial_{\mu_1 } \vec{U}(t_0, x, \mu, x_i) = \nabla_{\mu_1} Y^{x,\xi, x_i}_{t_0}.
\end{eqnarray}
We shall note that \eqref{tdXxi-}-\eqref{tdXxi-new}, \eqref{tdXxi-2}-\eqref{tdXxi-2new} is different from \eqref{tdYx} and \eqref{tdYx-}, so \eqref{pamuVdiscrete} provides an alternative discrete representation.

{\it Step 3.} We now prove \eqref{pamuV} in the case that $\mu$ is absolutely continuous. For each $n\ge 3$, set 
\[
x^n_{\vec i} := \frac{\vec i}{n}, \quad
\Delta_{\vec i}^n:=\left[\frac{i_1}{n},\frac{i_1+1}{n}\right)\times\cdots\times\left[\frac{i_d}{n},\frac{i_d+1}{n}\right),\quad \vec i=(i_1,\cdots,i_d)^{\top}\in\mathbb Z^d.
\]
For any $x\in\mathbb R^d$, there exists $\vec i(x):=(i_1(x),\cdots,i_d(x))\in \mathbb Z^d$ such that $x\in \Delta_{\vec i(x)}^n$. Let 
$$
\vec i^n(x):=(i^n_1(x),\cdots,i^n_d(x))\in \mathbb Z^d,\quad\mbox{where}\quad 
i^n_l(x):=\min\{\max\{i_l,-n^2\},n^2\}, ~ l=1,\cdots,d.
$$
Denote $Q_n:= \{x\in \mathbb{R}^d: |x_i|\le n, i=1,\cdots, d\}$, $\mathbb Z_n^d:=\{\vec i\in\mathbb Z^d\,:\,\Delta^n_{\vec{i}}\cap Q_{n}\not=\emptyset\}$, and 
\begin{eqnarray}
	\label{xin}
	\xi_n :=\sum_{\vec i\in \mathbb Z_n^d} x_{\vec i}^n{\bf 1}_{\Delta_{\vec i}^n}(\xi)+\frac{\vec i^n(\xi)}{n}{\bf 1}_{Q_n^c}(\xi). 
\end{eqnarray}
It is clear that $\lim_{n\to+\infty}\mathbb E\big[|\xi_n - \xi|^2\big]=0$ and thus $\lim_{n\to\infty} W_2({\cal L}_{\xi_n}, {\cal L}_\xi)=0$. Then for any  scalar random variable $\eta$, by stability of FBSDE \eqref{tdmuFBSDE} and  BSDE \eqref{eq:nabla mu X}, we derive from \eqref{pamuV1} that
\begin{eqnarray}
	\label{tdmuYconv}
	\mathbb{E}\Big[\partial_{\mu_1 } \vec{U}(t_0,x,\mu, \xi)\eta \Big] = \delta Y^{x,\xi, \eta e_1}_{t_0} = \lim_{n\to\infty} \delta Y^{x,\xi_n, \eta e_1}_{t_0}.
\end{eqnarray}

For each $\tilde x\in \mathbb{R}^d$, let $\vec i(\tilde x)$ be the $i$ such that $\tilde x \in \Delta_{\vec i}^n$, which holds when $n> |\tilde x|$. Then $\big({\cal L}_{\xi_n}, \frac{\vec i(\tilde x)}{n}\big) \to (\mu, \tilde x)$ as $n\to \infty$ in $W_2$ and as a sequence in $\R^d$, respectively. By the stability of FBSDEs \eqref{FBSDE1}-\eqref{FBSDE2}, we have as $n \to \infty$ that
$
X^{\xi_n, \frac{\vec{i}(\tilde x)}{n}}\to X^{\xi, \tilde x}
$ 
and
$
Y^{\xi_n, \frac{\vec{i}(\tilde x)}{n}} \to Y^{\xi, \tilde x},
$
as $n\to+\infty$, under the norm given by $\|A\| := \E \(\sup_t \abs{A_t}^2\)$. Moreover, since $\mu$ is absolutely continuous, 
$$
\mathbb{P}\Big(\xi_n = \frac{\vec{i}(\tilde x)}{n}\Big) = \mathbb{P}\Big(\xi \in \Delta_{\vec i}^n\Big) \to 0, \quad \text{as} \quad n\to \infty.
$$ 
Then by the stability of \eqref{tdXxi-}-\eqref{tdXxi-new}, \eqref{tdXxi-2}-\eqref{tdXxi-2new} and \eqref{eq:nabla mu X2} we can check that
\begin{equation}
	\label{xinconv}
	\lim_{n\to\infty}\bigg(\nabla_1 \Phi^{\xi_n,\frac{\vec{i}(\tilde x)}{n}},\; \nabla_1 \Phi^{\xi_n,\frac{\vec{i}(\tilde x)}{n}, *},\; \nabla_{\mu_1}\Phi^{x,\xi_n,\frac{\vec{i}(\tilde x)}{n}}\bigg) =\bigg(\nabla_1 \Phi^{\xi,\tilde x},\; \nabla_1\Phi^{\xi,\tilde x, *},\; \nabla_{\mu_1}\Phi^{x,\xi,\tilde x}\bigg).
\end{equation}
Now for any bounded function $\varphi\in C(\mathbb{R}^d)$, set $\eta=\varphi(\xi)$ in \eqref{tdmuYconv}, we derive from  \eqref{tdYxipi} that
\begin{eqnarray*}
	\mathbb{E}\Big[\partial_{\mu_1 } \vec{U}(t_0,x,\mu, \xi)\varphi(\xi)\Big] 
	=  \lim_{n\to\infty} \delta Y^{x,\xi_n, \varphi(\xi_n)e_1}_{t_0} 
	=  \lim_{n\to\infty} \sum_{\vec i\in \mathbb Z_n^d} \varphi\Big(x^n_{\vec i}\Big)\delta Y^{x,\xi_n, {\bf 1}_{\{\xi_n=x^n_{\vec i}\}}e_1}_{t_0}
\end{eqnarray*}
and so, 
\begin{eqnarray*}
	\mathbb{E}\Big[\partial_{\mu_1 } \vec{U}(t_0,x,\mu, \xi)\varphi(\xi)\Big]= \lim_{n\to\infty} \sum_{\vec i\in\mathbb Z^d_n} \varphi(x^n_{\vec i})\nabla_{\mu_1} Y^{x,\xi_n, x^n_{\vec i}}_{t_0} \mathbb{P}(\xi\in \Delta_{\vec i})  = \int_{\mathbb R^d} \varphi(\tilde x)\nabla_{\mu_1} Y^{x,\xi, \tilde x}_{t_0} \dd \mu(\tilde x).
\end{eqnarray*}
This implies \eqref{pamuV} immediately.

{\it Step 4.} We finally prove the general case. Denote $\psi(x,\mu, \tilde x):= \nabla_{\mu_1} Y^{x,\xi, \tilde x}_{t_0}$. By the stability of FBSDEs, $\psi$ is continuous in all the variables. Fix an arbitrary $(\mu, \xi)$. One can  construct $\xi_n$ such that  ${\cal L}_{\xi_n} $ is absolutely continuous and $\lim_{n\to\infty}\mathbb{E}[|\xi_n-\xi|^2]=0$. Then, for any $\eta= \varphi(\xi)$ as in Step 3, by \eqref{pamuV1} and Step 3 we have 
\[
\displaystyle \mathbb{E}\big[\partial_{\mu_1 } \vec{U}(t_0,x, \mu, \xi) \varphi(\xi)\big] =  \lim_{n\to \infty}  \delta Y^{x, \xi_n, \varphi(\xi_n)e_1}_{t_0}=
\lim_{n\to \infty}\mathbb{E}\big[ \psi(x, {\cal L}_{\xi_n}, \xi_n)\varphi(\xi_n)\big] = \mathbb{E}\big[ \psi(x, \mu, \xi)\varphi(\xi)\big], 
\]
which implies \eqref{pamuV} in the general case for $k=1$. Similarly, we can show \eqref{pamuV} for $k=1,\cdots, d$. 

Note that $\partial_{\mu_k }\vec{U}(t_0,x,\mu,\tilde x)$ is deterministic and so
\[
\abs{\partial_{\mu_k } \vec{U}(t_0,x,\mu,\tilde x)}^2 
= \E\left(\abs{{\partial_{\mu_k }}\vec{U}(t_0,x,\mu,\tilde x)}^2\right) 
= \E\left(\abs{\nabla_{\mu_k}Y_{t_0}^{x,\xi,\tilde x}}^2\right)
\] which is bounded by a universal constant by Corollary \ref{cor: muk Y bound}. 
\end{proof}

\begin{cor}\label{cor:short}
Assume that $G,H$ satisfy Assumptions \ref{assum: G} and \ref{assum: H} and $\delta$ is given in Lemma \ref{lem:delta}. Recall the system \eqref{FBSDE2} and define $\vec{U}(t_0,x,\mu):=Y_{t_0}^{x,\xi}$ for any $t_0$ with $T-t_0<\delta$. Then the following decoupled McKean--Vlasov FBSDE
\begin{equation}\label{FBSDE100}
\left\{
\begin{array}{lcl}
{X_t^{x}} &=& x  + {\beta}B_t^{{0,t_0}} \\
{Y_t^{x}} &=& \displaystyle G(X_T^{x}, \rho_T) {-}\int_t^T H(X_s^{x}, \rho_s, Y_s^{x,\xi}) \dd s - \int_t^T Z_s^{0,x} \dd B_s^{{t_0}}
\end{array}
\right.
\end{equation}
is well-posed on $[t_0,T]$ for any $x\in\R^d$. Define $V(t_0,x,\mu):=Y^{x}_{t_0}$. Then $V$ is the unique classical solution of the master equation \eqref{eq:master} and $\pa_xV=\vec{U}$ on $[t_0,T]\times \R^d\times\sP_2(\R^d)$.
\end{cor}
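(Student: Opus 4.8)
The plan is to read Corollary~\ref{cor:short} as a verification theorem resting on the a priori work already done: for $T-t_0<\delta$ the Pontryagin system \eqref{FBSDE1} is well-posed and produces the flow $(\rho_t)_{t\in[t_0,T]}$, the momentum field $\vec U$ has $\pa_\mu\vec U$ bounded by a universal constant (Proposition~\ref{prop:repr}), and the linearized systems are well-posed (Lemma~\ref{lem:delta}); it remains to construct $V$ and verify it solves \eqref{eq:master}. \emph{Step 1 (definition of $V$).} Given the already-constructed $(\rho_t)$ and momentum process $Y^{x,\xi}$, the backward equation in \eqref{FBSDE100} is decoupled: its driver $H(X_s^x,\rho_s,Y_s^{x,\xi})$ and terminal value $G(X_T^x,\rho_T)$ are prescribed square-integrable $\mathbb F$-adapted data not involving $Y^x$. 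Square integrability follows from the growth bounds in Assumptions~\ref{assum: G} and \ref{assum: H} together with the a priori $L^2$ estimates of Lemma~\ref{lem:delta}, so the martingale representation theorem yields a unique pair $(Y^x,Z^{0,x})$ and legitimately defines $V(t_0,x,\mu):=Y^x_{t_0}$.

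\emph{Step 2 (regularity and the identity $\pa_x V=\vec U$).} The regularity in the measure variable is exactly Proposition~\ref{prop:repr}, which gives $\vec U\in\cC^1$ in $\mu$ with $\pa_\mu\vec U$ universally bounded; differentiating \eqref{FBSDE100} and the linearized systems \eqref{tdYx} and \eqref{tdYx-} once more (all well-posed by Lemma~\ref{lem:delta}) supplies the remaining second-order space and measure derivatives, jointly continuous by stability of the FBSDEs, so that $V\in\cC^2(\R^d\times\sP_2(\R^d))$. The identity $\pa_x V=\vec U$ is the standard decoupling-field (Pontryagin) fact underlying Lemma~\ref{lem:representation}, namely $Y_t^{x,\xi}=\pa_x V(t,X_t^x,\rho_t)$, which at $t=t_0$ (where $X_{t_0}^x=x$) reads $\vec U(t_0,x,\mu)=\pa_x V(t_0,x,\mu)$.

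\emph{Step 3 (the master equation).} I would apply the It\^o--Wentzell formula to $t\mapsto V(t,X_t^x,\rho_t)$ along the path $X_t^x=x+\beta B_t^{0,t_0}$ and the conditional-law flow $\rho_t$ of \eqref{eq:rhoV}. Using $\pa_x V=\vec U$, the $\dd t$-drift produced assembles exactly $\pa_t V+\tfrac{\beta^2}{2}\Delta_{{\rm com}}V+(\m NV)$: the quadratic variation of $X^x$, the diffusion of $\rho$, the $\rho$--$\rho$ correlation and the $X^x$--$\rho$ correlation (all driven by the single common noise $B^0$) produce the four trace terms of $\Delta_{{\rm com}}V$, including the factor-two mixed term $2\int\tr(\pa_{x\mu}V)\dd\mu$, while the transport drift ${\rm div}(\rho\,\pa_pH)$ yields $-\int\pa_\mu V\cdot\pa_pH\,\dd\rho=(\m NV)$. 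Matching this drift against the driver $H(x,\mu,\pa_x V)$ forced by \eqref{FBSDE100} gives $-\pa_t V+H-(\m NV)-\tfrac{\beta^2}{2}\Delta_{{\rm com}}V=0$ pointwise, and the terminal condition $V(T,\cdot,\cdot)=G$ is read off at $t=T$.

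\emph{Step 4 (uniqueness) and the main obstacle.} Any classical solution $\tilde V$ on $[t_0,T]$ must obey the representation of Lemma~\ref{lem:representation} along the characteristics of \eqref{FBSDE1}, forcing $\pa_x\tilde V=\vec U$, and substituting this into \eqref{FBSDE100} determines $\tilde V=V$. I expect the genuine difficulty to lie in Step~3: because $\rho_t$ is a measure-valued SPDE and the common noise couples the space and measure variables, the It\^o--Wentzell bookkeeping of the mixed and second-order terms against $\Delta_{{\rm com}}$ must be carried out carefully, and it is here that the full $\cC^2$ regularity assembled in Step~2 is indispensable. Steps~1, 2 and 4 are, by contrast, routine given Lemma~\ref{lem:delta} and Proposition~\ref{prop:repr}.
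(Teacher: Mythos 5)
Your overall strategy (define $V$ from the decoupled BSDE, establish regularity, verify the PDE via It\^o--Wentzell, then get uniqueness from the representation formulas) is the same verification route the paper takes -- the paper simply outsources it to \cite[Proposition 5.2]{MouZha22}. However, there is a genuine gap in your Step 2, and it is precisely the point the paper's proof flags as the non-routine part. You claim $V\in\cC^2(\R^d\times\sP_2(\R^d))$ by ``differentiating \eqref{FBSDE100} and the linearized systems \eqref{tdYx} and \eqref{tdYx-} once more (all well-posed by Lemma~\ref{lem:delta})''. But Lemma~\ref{lem:delta} only covers the \emph{first-order} linearized systems; differentiating them again produces FBSDEs whose coefficients involve derivatives of $\pa_{xx}H$, $\pa_{x\mu}H$, $\pa_{xx}G$, $\pa_{\mu x}G$, etc., i.e.\ \emph{third} derivatives of the data. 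Assumptions \ref{assum: G} and \ref{assum: H} only give $C^2$ data with uniformly continuous second derivatives, so this differentiation is not available, and neither is the time regularity of $V$ (existence and continuity of $\pa_t V$), which you never address but which is needed for ``classical solution''. The missing idea is a mollification argument: replace $G,H$ by smooth approximations (the paper cites the specific mollifier of \cite[Section 3]{MouZha23}, designed to preserve the structural bounds), run the smooth-data verification to get classical solutions $V^\varepsilon$, and pass to the limit using stability of the FBSDEs and the uniform (universal) estimates. Without this step your construction does not produce the second-order and time derivatives that the master equation requires pointwise.

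A secondary, fixable flaw: in Step 2 you deduce $\pa_x V=\vec U$ from Lemma~\ref{lem:representation}, but that lemma \emph{assumes} $V$ is already a classical solution of \eqref{eq:master} with bounded $\pa_{xx}V$, $\pa_{x\mu}V$ -- which you only establish in Step 3. As written this is circular; in the construction direction the identity $\pa_x V=\vec U$ must be proved by differentiating \eqref{FBSDE100} in $x$ and comparing with \eqref{FBSDE2} (note the driver of \eqref{FBSDE100} depends on $x$ also through $Y^{x,\xi}$, so this comparison is itself part of the bookkeeping), or obtained in the limit from the mollified problems. Your Steps 1, 3 and 4 are otherwise consistent with the intended argument.
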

\begin{proof}
The proof is essentially the same as  \cite[Proposition 5.2]{MouZha22}. The only difference is that here we do not have the presence of idiosyncratic noise and we need to take care of the less regular data $G,H$. This can be done by using the smooth mollifier constructed in \cite[Section 3]{MouZha23}.
\end{proof}

\section{Long Time Well-Posedness for the Master Equation}\label{sec:5}

First we recall a short time existence result, \cite[Theorem 5.45]{Carmona2018}.

\begin{lem}\label{lem: short time}
Suppose that Assumptions \ref{assum: G} and \ref{assum: H} are satisfied. Then there exists a universal constant $c > 0$ and $V$ so that $V$ is a classical solution to the master equation on $[T - c, T]\times\R^d\times\sP_2(\R^d)$. Furthermore for each fixed $t \in [T - c, T]$, $V(t,\cdot,\cdot)$ satisfies the same assumptions as $G$ in Assumption \ref{assum: G}.  
\end{lem}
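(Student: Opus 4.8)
The plan is to take $c$ to be a universal constant no larger than the time horizon $\delta$ furnished by Lemma~\ref{lem:delta} (recall $\delta$ depends only on $L^H$ and $L^G_2$, hence is universal), and to read off both $V$ and all of its relevant bounds directly from the FBSDE representation machinery of Section~\ref{sec:4}, rather than re-running a fixed-point argument for the PDE. The existence of a unique classical solution $V$ on $[T-c,T]\times\R^d\times\sP_2(\R^d)$, together with the identity $\pa_x V=\vec U$, is exactly the content of Corollary~\ref{cor:short}. Thus the only remaining task is the quantitative regularity claim: that for each fixed $t\in[T-c,T]$ the function $V(t,\cdot,\cdot)$ obeys Assumption~\ref{assum: G} with a universal constant.

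For the three quantitative ingredients I would argue as follows. The bound on $\pa_{x\mu}V$ is immediate: since $\pa_x V=\vec U$ we have $\pa_{x\mu}V=\pa_\mu\vec U$, and Proposition~\ref{prop:repr} asserts precisely that $\pa_\mu\vec U$ is uniformly bounded by a universal constant. For $\pa_{xx}V$, I would differentiate the backward equation in \eqref{FBSDE2} in the state variable $x$. Because $X_t^x=x+\beta B_t^{0,t_0}$ has $\nabla_x X^x_t$ equal to the identity and $\rho_t$ does not depend on $x$, the gradient $\nabla_x Y^{x,\xi}$ solves a \emph{linear} BSDE with terminal datum $\pa_{xx}G(X^x_T,\rho_T)$ and driver assembled from $\pa_{xx}H$ and $\pa_{px}H$; all of these are bounded by $L^G$ and $L^H$, so the standard BSDE estimate (as in \cite[Theorem 4.2.1]{Zhang2017}) gives $\mathbb{E}[\sup_t|\nabla_x Y^{x,\xi}_t|^2]\le C$ with $C$ universal. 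Since $\nabla_x Y^{x,\xi}_{t_0}=\pa_x\vec U(t_0,x,\mu)=\pa_{xx}V(t_0,x,\mu)$ is deterministic, this upgrades to the pointwise bound $|\pa_{xx}V|\le C$. Finally, the lower bound follows from the control representation \eqref{eq:VV} of Lemma~\ref{lem: representationformula}: as $G\ge -L^G$ by Assumption~\ref{assum: G} and \eqref{eq:Lbdd} forces the Lagrangian $L$ to satisfy $L\ge -L^H$, one obtains $V(t_0,x,\mu)\ge -L^G-(T-t_0)L^H\ge -C$.

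It remains to establish the \emph{uniform} continuity of the second order derivatives of $V$ (in the state variable and with respect to $W_2$ in the measure variable), and I expect this to be the main obstacle. Existence of the full family of second order derivatives and their joint continuity is already part of the assertion that $V$ is a classical solution in Corollary~\ref{cor:short}, so the point is only to promote continuity to a modulus of continuity independent of the base point. Each such derivative is represented as the terminal value of a linear (McKean--Vlasov) FBSDE whose coefficients are the second order derivatives of $G$ and $H$ evaluated along the characteristics \eqref{FBSDE1} and \eqref{FBSDE2}. Assumptions~\ref{assum: G}(2) and \ref{assum: H}(3) guarantee that these coefficients are uniformly continuous, and the stability estimates for FBSDEs (used throughout Lemma~\ref{lem:delta} and Proposition~\ref{prop:repr}) propagate a uniform modulus from the data, through the flow of characteristics, to the represented derivatives of $V$. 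The delicate point, and the reason this differs from the non-degenerate references, is that no smoothing from an idiosyncratic Laplacian is available; here the linearized systems of Lemma~\ref{lem:delta} and the discretization-and-limit scheme of Proposition~\ref{prop:repr}—in particular the continuity of $\psi(x,\mu,\tilde x)=\nabla_{\mu_1}Y^{x,\xi,\tilde x}_{t_0}$ established there—are exactly what make the argument go through, and they provide the template to be quantified. Collecting the three bounds with this uniform continuity shows that $V(t,\cdot,\cdot)$ satisfies Assumption~\ref{assum: G} with universal constants, which completes the proof.
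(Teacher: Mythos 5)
Your proposal takes a genuinely different route from the paper, and while much of it is correct, it has a real gap precisely at the point you yourself flag as ``the main obstacle.'' The paper does not assemble the short-time solution from Corollary \ref{cor:short}; it invokes the short-time well-posedness theorem \cite[Theorem 5.45]{Carmona2018}, whose construction already delivers the full regularity of the master field required by Assumption \ref{assum: G}, and then only verifies that the constants governing the length of the time interval are universal (the map $\phi$ there is the identity, the convexity parameter is $\lambda=\frac{1}{4c_0}$, and the relevant Lipschitz constant $L$ is universal by Corollaries \ref{cor: space and W2} and \ref{cor: muk Y bound}). By contrast, your route must prove by hand that $V(t,\cdot,\cdot)$ satisfies \emph{all} of Assumption \ref{assum: G}, and that is where your argument stops being a proof. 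To your credit, the pieces you do carry out are sound: taking $c\le\delta$ and getting existence with $\pa_xV=\vec U$ from Corollary \ref{cor:short}; the bound on $\pa_{x\mu}V$ from Proposition \ref{prop:repr}; the bound $|\pa_{xx}V|\le C$ via the linear BSDE obtained by differentiating the backward equation of \eqref{FBSDE2} in $x$ (a clean alternative, exploiting that $X^x_t=x+\beta B^{0,t_0}_t$ and $\rho_t$ do not depend on $x$); and the lower bound $V\ge -L^G-(T-t_0)L^H$ from \eqref{eq:VV} and \eqref{eq:Lbdd}.

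The gap is the full $\cC^2(\R^d\times\sP_2(\R^d))$ regularity and part (2) of Assumption \ref{assum: G}. Being a ``classical solution'' in this paper's sense only involves the derivatives that appear in the equation (when $\beta=0$ these are first order only), so Corollary \ref{cor:short} does not give you existence, joint continuity, or uniform continuity of $\pa_{\tilde x\mu}V$ and $\pa_{\mu\mu}V$; and Section \ref{sec:4} supplies a representation formula for $\pa_{x\mu}V=\pa_\mu\vec U$ \emph{only} --- nowhere in the paper is there a representation for $\pa_{\mu\mu}V$ or $\pa_{\tilde x\mu}V$. To produce one along your lines you would need to differentiate the systems \eqref{FBSDE1}--\eqref{FBSDE3} in the measure variable a second time, prove well-posedness and stability of those new linearized McKean--Vlasov systems on a universal time interval, and extract a uniform modulus of continuity from Assumptions \ref{assum: G}(2) and \ref{assum: H}(3); none of this is done, and the sentence ``the stability estimates propagate a uniform modulus from the data'' appeals to exactly the machinery that is missing. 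Since the ``Furthermore'' clause is what allows $V(t,\cdot,\cdot)$ to serve as new terminal data in the iteration of Theorem \ref{thm:main}, this is not a cosmetic omission but the substantive content of the lemma; the paper sidesteps it by delegating the construction (and its regularity) to \cite[Theorem 5.45]{Carmona2018}.
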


\begin{proof}
First note that Assumption \ref{assum: G} and \ref{assum: H} gives the regularity conditions for the short time existence that will not affect the size of the time interval (in the notation of \cite{Carmona2018} the terms bounded by $\Gamma$). 

We note that the length of the time interval, $c$, is a universal constant for us. Indeed we have that $\phi$ is the identity (since in the notation of \cite{Carmona2018}, $b = \alpha$ for us), $\lambda = \frac{1}{4c_0}$ (this is the strong convexity constant for the Lagrangian which was assumed for us in Assumption \ref{assum: H}), and $L$ is the sum of the Lipschitz constants of $\pa_x H$ and $\pa_x V$ in space and in measure with respect to $W_1$, which is bounded by a universal constant due to Corollaries \ref{cor: space and W2} and \ref{cor: muk Y bound}. 
\end{proof}

Finally we prove our global well-posedness result.

\begin{thm}\label{thm:main}
Suppose that $G,H$ are displacement monotone and satisfy Assumptions \ref{assum: G} and \ref{assum: H}.

Then there is a unique global in time classical solution $V$ to the master equation \eqref{eq:master} within the class of bounded $\pa_{xx}V$ and $\pa_{x\mu}V$. 
\end{thm}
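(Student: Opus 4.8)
The plan is to run a continuation (bootstrap) argument that stitches together the short-time solutions furnished by Lemma \ref{lem: short time}, exploiting the fact that every a priori bound established in Sections \ref{sec:3} and \ref{sec:4} is \emph{universal}, hence independent of how far back in time the solution has already been extended. The whole argument reduces to checking that the short-time horizon does not shrink as we iterate.

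First I would collect the a priori estimates that any classical solution $V$ with bounded $\pa_{xx}V$ and $\pa_{x\mu}V$ necessarily satisfies. By Proposition \ref{prop:monotone} displacement monotonicity of $G$ and $H$ propagates, so $V(t,\cdot,\cdot)$ is displacement monotone for every $t$; together with Lemma \ref{lem:dismon convex} this forces $\pa_{xx}V(t,\cdot,\cdot)\geq 0$, while Proposition \ref{cor:Vsemi-concave} supplies the matching upper bound. Thus $\|\pa_{xx}V\|_{L^\infty}$ is controlled by a universal constant and, by Corollary \ref{cor: space and W2}, $\pa_x V$ is uniformly Lipschitz in $x$ and in $\mu$ with respect to $W_2$. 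Feeding these into the FBSDE representation of $\pa_{\mu x}V$ in Proposition \ref{prop:repr}, Corollary \ref{cor: muk Y bound} bounds $\pa_{\mu x}V$ by a universal constant, which is equivalent to $\pa_x V$ being uniformly $W_1$-Lipschitz. This last bound is precisely the quantity that governs the short-time horizon in Lemma \ref{lem: short time}, so its universality is what keeps that horizon from collapsing.

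With these bounds in hand the continuation is routine. Lemma \ref{lem: short time} produces, for a \emph{universal} $c>0$, a classical solution on $[T-c,T]$ such that $V(T-c,\cdot,\cdot)$ is displacement monotone and satisfies the same bounds of Assumption \ref{assum: G} type (with universal constants) as $G$ itself. I would then reapply Lemma \ref{lem: short time} with terminal datum $V(T-c,\cdot,\cdot)$ in place of $G$, obtaining a solution on $[T-2c,T-c]$, and iterate. Because the terminal-data bounds regenerate with the same universal constants at each stage — the a priori estimates above never deteriorate — the identical step size $c$ works at every step, so after at most $\lceil T/c\rceil$ iterations the solution reaches $t=0$. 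Gluing the pieces, which agree on overlaps by the local uniqueness in Lemma \ref{lem: short time}, yields a global classical solution on $[0,T]$ in the required class; uniqueness follows by the same mechanism, since any two solutions in the class obey the universal bounds, coincide with the unique short-time solution on $[T-c,T]$, and the equality then propagates interval by interval down to $t=0$.

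The main obstacle — and the single point where the scheme could fail — is guaranteeing that $c$ is bounded below uniformly along the iteration. Since $c$ is controlled by the $W_1$-Lipschitz constant of $\pa_x V$, one must bound that constant universally and independently of the starting time. The delicate issue is that this bound comes from the representation of $\pa_{\mu x}V$ in Proposition \ref{prop:repr}, whose underlying linearized FBSDE systems are themselves only well-posed on a short horizon; but by Lemma \ref{lem:delta} that horizon depends solely on the $W_2$-Lipschitz constant of $\pa_x V$ (available from Corollary \ref{cor: space and W2} via semi-concavity and convexity), \emph{not} on its $W_1$-Lipschitz constant. This decoupling is exactly what breaks the apparent circularity and allows the bootstrap to close: were the representation formula to require control of the $W_1$-Lipschitz constant from the outset, the whole argument would collapse.
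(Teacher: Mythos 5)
Your existence argument is essentially the paper's own: you collect the universal a priori bounds (propagation of displacement monotonicity via Proposition \ref{prop:monotone}, convexity plus semi-concavity giving control of $\pa_{xx}V$, the $W_2$-Lipschitz estimate of Proposition \ref{prop:Lip}, and then the FBSDE representation of Proposition \ref{prop:repr} with Corollary \ref{cor: muk Y bound} giving the $W_1$-Lipschitz bound on $\pa_x V$), and these keep the step size of Lemma \ref{lem: short time} from shrinking along the iteration. You also correctly isolate the structural point that the horizon in Lemma \ref{lem:delta} depends only on the $W_2$-Lipschitz constant of $\pa_x G$, not the $W_1$ one, which is exactly what breaks the apparent circularity; this matches the paper's emphasis.

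However, your uniqueness argument has a genuine gap. Lemma \ref{lem: short time} is stated (and used in the paper) purely as an \emph{existence} result; it does not assert that the short-time solution is unique within the class of classical solutions with bounded $\pa_{xx}V$ and $\pa_{x\mu}V$. Your claim that any two solutions in the class ``coincide with the unique short-time solution on $[T-c,T]$'' is therefore precisely the statement that needs proof, and your scheme is circular: global uniqueness is reduced to a local uniqueness that is never established. The paper proves uniqueness by a different, three-step mechanism: (i) if $V$ and $\tilde V$ are two classical solutions in the class, then $\pa_x V$ and $\pa_x \tilde V$ both serve as decoupling fields of the same Pontryagin-type FBSDE \eqref{FBSDE1}, whose short-time (then iterated) well-posedness forces $\pa_x V = \pa_x \tilde V$; (ii) equality of the gradients fixes the drift in the Fokker--Planck part of \eqref{eq:SPDE}, so the measure flows coincide, $\rho = \tilde \rho$; (iii) the representation formula of Lemma \ref{lem: representationformula} expresses $V(t_0,x,\mu)$ \emph{itself} as the value of a stochastic control problem determined solely by the data and $\rho$, whence $V = \tilde V$. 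Step (iii) is indispensable: FBSDE arguments naturally identify only $\pa_x V$, and two classical solutions could a priori differ by a function of $(t,\mu)$ alone; nothing in your continuation scheme rules this out. The same issue touches your gluing step — the paper first proves global uniqueness and only then uses it to see that the $V_k$'s are consistent on overlaps, rather than invoking an unavailable local uniqueness.
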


\begin{proof}
We first prove uniqueness. Let $V, \ti V$ be two classical solutions to the master equation with bounded $\pa_{xx}V$, $\pa_{x\mu}V$, $\pa_{xx}\tilde V$, $\pa_{x\mu}\tilde V$. Because of the short time well-posedness of the system \eqref{FBSDE1} we obtain that $\pa_x V = \pa_x \ti V$. 
We will now use \eqref{eq:SPDE} to show that $V = \ti V$. Let $u, \ti u$ be the value functions given by \eqref{eqn: MFSrepform} and $\rho, \ti \rho$ be the associated solutions of the second equation of \eqref{eq:SPDE}. Since $\pa_x V = \pa_x \ti V$ we have that $\pa_x u = \pa_x \ti u$ and so from the second equation of \eqref{eq:SPDE} we obtain that $\rho = \ti \rho$. It now follows from Lemma \ref{lem: representationformula} that $V = \ti V$.

\indent Next we prove existence. Let $T>0$ be an arbitrary long time horizon.

We will repeatedly apply the short time existence result Lemma \ref{lem: short time}. 
By the results of the previous sections, any solution $V$ to the master equation will be such that $\pa_x V(t,\cdot,\cdot)$ is uniformly Lipschitz continuous in the $(x,\mu)$-variable (with respect to $W_1$ in the $\mu$-variable), and this constant depends only on $G, H$ and $T$.

We let $V_0$ be a short time solution to the master equation on $[T-\delta, T]$. For $k=1,2,\dots$, we can recursively define solutions $V_k$ to the master equation by letting $V_k$ be the short time solution on $[T - (k+1)\frac{\delta}2, T - k\frac{\delta}2]$ with the terminal condition $V_k(T - k\frac{\delta}2, \cdot) = V_{k-1}(T - k\frac{\delta}2, \cdot)$. Since $\delta$ is a universal constant (as in particular the Lipschitz constants of $\pa_x V_k(t,\cdot,\cdot)$ are universally bounded by a constant depending on $G, H$ and $T$) we will need only finitely many steps to cover the whole interval $[0,T]$. Because of the uniqueness proved above we have that the $V_k$'s agree where their domains overlap and so we can stitch these together to obtain a classical solution to the master equation. 
\end{proof}

\begin{rmk}
The stability of the classical solution to the master equation \eqref{eq:master} obtained above is expected. Indeed, this is the consequence of the notion of solution in the full $\mathcal{C}^{k}$ spaces and would follow along the same lines as the proof of \cite[Theorem 5.45]{Carmona2018} (which is based on an approximation argument). Based on the proof of that theorem, we expect the following result. Let $(H_{n},G_{n})_{n\in\mathbb{N}}$ be a sequence of data functions, satisfying our main assumptions, such that this sequence, and its derivatives up to order two, converge locally uniformly to $(H,G)$, and to its derivatives, respectively, as $n\to+\infty$. The convergence in the measure variable is with respect to $W_{1}.$ Suppose that $(H,G)$ satisfies our standing assumptions, including the displacement monotonicity. Then $(V_{n})_{n\in\mathbb{N}}$ (the solution to the master equation with data $(H_{n},G_{n})$) and its derivatives up to order two convergence locally uniformly to $V$ (the solution to the master equation with data $(H,G)$), and to its derivatives, respectively, as $n\to+\infty$. As this result is expected, we decided not to give further details on it.

It worth however mentioning that for given data $(H,G)$ satisfying suitable monotonicity conditions, it is a subtle question to construct smooth approximations which satisfy the same monotonicity conditions. In the case of Lasry--Lions monotone data, we refer to the discussion in \cite{MouZha23} on this matter.
\end{rmk}

{\bf Acknowledgements.} MB's work was supported by the National Science Foundation Graduate Research Fellowship under Grant No. DGE-1650604. MB and ARM acknowledge the support of the Heilbronn Institute for Mathematical Research and the UKRI/EPSRC Additional Funding Programme for Mathematical Sciences through the focused research grant ``The master equation in Mean Field Games''. ARM has also been partially supported by the EPSRC New Investigator Award ``Mean Field Games and Master equations'' under award no. EP/X020320/1 and by the King Abdullah University of Science and Technology Research Funding (KRF) under award no. ORA-2021-CRG10-4674.2. CM gratefully acknowledges the support by CityU Start-up Grant 7200684, Hong Kong RGC Grant ECS 21302521, Hong Kong RGC Grant GRF 11311422 and Hong Kong RGC Grant GRF 11303223. 

\bibliographystyle{alpha}
\bibliography{MeanField_revision}

\end{document}